\documentclass{article}
\usepackage{psfrag}
\usepackage[parfill]{parskip}
\usepackage{float, graphicx}
\usepackage[]{epsfig}
\usepackage{amsmath, amsthm, amssymb}
\usepackage{epsfig}
\usepackage{verbatim}
\usepackage{multicol}
\usepackage{url}
\usepackage{latexsym}
\usepackage{mathrsfs}
\usepackage[colorlinks, bookmarks=true]{hyperref}
\usepackage{graphicx}
\usepackage{amsmath}
\usepackage{enumerate}
\usepackage[normalem]{ulem}
\usepackage{bm}
\usepackage{dsfont}
\usepackage{stmaryrd}
\usepackage{tikz-cd}
\usepackage[T1]{fontenc}
\usepackage{authblk}

\usepackage{cite}

\flushbottom
\usepackage{color}

\makeatletter
\def\thm@space@setup{%
  \thm@preskip=\parskip \thm@postskip=0pt
}
\makeatother


\oddsidemargin=0in
\evensidemargin=0in
\textwidth=6.5in
\setlength{\unitlength}{1cm}
\setlength{\parindent}{0.6cm}

\numberwithin{equation}{section}

\renewcommand{\cal}{\mathcal}
\newcommand\cA{{\mathcal A}}

\newcommand{\cC}{{\cal C}}
\newcommand{\cD}{\cal D}
\newcommand{\cE}{{\cal E}}

\newcommand\cH{{\mathcal H}}

\newcommand{\cL}{{\cal L}}

\newcommand{\fa}{{\frak a}}
\newcommand{\fb}{{\frak b}}
\newcommand{\fc}{{\frak c}}

\newcommand{\fe}{{\frak e}}
\newcommand{\fr}{{\frak r}}

\newcommand{\rd}{{\rm d}}

\newcommand{\ri}{\mathrm{i}}

\newcommand{\bB}{{\mathbb B}}
\newcommand{\bC}{{\mathbb C}}

\newcommand{\bE}{\mathbb{E}}

\newcommand{\bP}{\mathbb{P}}

\newcommand{\bR}{{\mathbb R}}

\newcommand{\bZ}{\mathbb{Z}}

\newcommand{\la}{\lambda}

\newcommand{\om}{{\omega}}

\newcommand{\veps}{\varepsilon}

\DeclareMathOperator{\supp}{supp}
\DeclareMathOperator{\dist}{dist}

\DeclareMathOperator{\dom}{\mathcal{D}}

\DeclareMathOperator{\OO}{O}
\DeclareMathOperator{\oo}{o}

\renewcommand{\Re}{\mathop{\mathrm{Re}}}
\renewcommand{\Im}{\mathop{\mathrm{Im}}}

\newcommand{\deq}{\mathrel{\mathop:}=} 
\newcommand{\eqd}{=\mathrel{\mathop:}}
\renewcommand{\leq}{\leqslant}
\renewcommand{\le}{\leqslant}
\renewcommand{\geq}{\geqslant}

\newcommand{\cor}{\color{red}}
\newcommand{\cob}{\color{blue}}

\newcommand{\td}{\tilde}
\newcommand{\del}{\partial}


\newcommand{\qq}[1]{[\![{#1}]\!]}

\newcommand{\beq}{\begin{equation}}
\newcommand{\eeq}{\end{equation}}

\theoremstyle{plain} 
\newtheorem{theorem}{Theorem}[section]
\newtheorem*{theorem*}{Theorem}
\newtheorem{lemma}[theorem]{Lemma}
\newtheorem*{lemma*}{Lemma}
\newtheorem{corollary}[theorem]{Corollary}
\newtheorem*{corollary*}{Corollary}
\newtheorem{proposition}[theorem]{Proposition}
\newtheorem*{proposition*}{Proposition}
\newtheorem{assumption}[theorem]{Assumption}
\newtheorem*{assumption*}{Assumption}

\newtheorem{definition}[theorem]{Definition}
\newtheorem*{definition*}{Definition}

\newtheorem*{example*}{Example}
\newtheorem{remark}[theorem]{Remark}

\newtheorem*{remark*}{Remark}
\newtheorem*{remarks*}{Remarks}


\def\author#1{\par
    {\centering{\authorfont#1}\par\vspace*{0.05in}}
}

\def\titlefont{\fontsize{13}{15}\bfseries\boldmath\selectfont\centering{}}
\def\authorfont{\fontsize{13}{15}}

\let\affiliationfont\rhfont

\def\address#1{\par
    {\centering{\affiliationfont#1\par}}\par\vspace*{11pt}
}

\def\body{
\setcounter{footnote}{0}
\def\thefootnote{\alph{footnote}}
\def\@makefnmark{{$^{\rm \@thefnmark}$}}
}

\def\title#1{
    \thispagestyle{plain}
    \vspace*{-14pt}
    \vskip 79pt
    {\centering{\titlefont #1\par}}%
    \vskip 1em
}


\newcommand{\bmla}{\bm{\lambda}}

\newcommand{\cov}{{\rm{cov}}}

\newcommand{\tilz}{\tilde z}

\begin{document}

\title{Dyson Brownian Motion for General $\beta$ and Potential at the Edge}

\vspace{1.2cm}

\noindent \begin{minipage}[c]{0.5\textwidth}
 \author{Arka Adhikari}
\address{Harvard University\\
   E-mail: adhikari@math.harvard.edu}
 \end{minipage}
 \begin{minipage}[c]{0.5\textwidth}
 \author{Jiaoyang Huang}
\address{Harvard University\\
   E-mail: jiaoyang@math.harvard.edu}
 \end{minipage}
 %

\begin{abstract}
In this paper, we compare the solutions of Dyson Brownian motion with general $\beta$ and potential $V$ and the associated McKean-Vlasov equation near the edge. Under suitable conditions on the initial data and potential $V$, we obtain the optimal rigidity estimates of particle locations near the edge for short time $t=\oo(1)$. Our argument uses the method of characteristics along with a careful estimate involving an equation of the edge.
With the rigidity estimates as an input,   we prove a central limit theorem for mesoscopic statistics near the edge which, as far as we know, have been done for the first time in this paper. Additionally, combining with \cite{LandonEdge}, our rigidity estimates are used to give a proof of the local ergodicity of Dyson Brownian motion for general $\beta$ and potential at the edge, i.e. the distribution of extreme particles converges to Tracy-Widom $\beta$ distribution in short time. 
\end{abstract}
\section{Introduction}

Random Matrix models were originally suggested by Wigner \cite{MR0083848,MR0077805} to model the nuclei of heavy atoms. 
The models he originally studied, the Gaussian orthogonal/unitary ensembles were successful in describing the spacing distribution between energy levels. Wigner conjectured that general random matrices will have this same spacing distribution as long as they are in the same symmetry class.

Later, in 1962 \cite{MR0148397}, Dyson interpreted the Gaussian orthogonal/unitary ensembles as dynamical limit of the matrix valued Brownian motion, which is given by
\begin{align}\label{e:MatrixDBM}
   \rd H(t)=\rd B(t)-\frac{1}{2}H(t)\rd t,
\end{align}
where $B(t)$ is the Brownian motion on real symmetric/complex Hermitian matrices. It turns out the eigenvalues of the above matrix valued Brownian motion satisfy a system of stochastic differential equations. These equations have been later generalized to stochastic differential equations, 
called the $\beta$-Dyson Brownian Motion with potential $V$,
\begin{equation}  \label{DBM}
    d \lambda_i(t) = \sqrt{\frac{2}{\beta N}} dB_i(t) + \sum_{i=1}^{N} \frac{dt}{\lambda_i(t) -\lambda_j(t)} - \frac{1}{2} V'(\lambda_i(t)) dt,\quad 1\leq i\leq N,
\end{equation}
where the initial data $\{\la_1(0),\la_2(0),\cdots,\lambda_N(0)\}$ lies in the closer of the Weyl chamber 
\begin{equation}
   \triangle_N :=\{(x_1,x_2,\cdots,x_N): x_1<x_2\cdots<x_N\}.
\end{equation}
The real symmetric and complex Hermitian matrix valued Brownian motion corresponds to \eqref{DBM} with $\beta=1$ and $\beta=2$ respectively, and quadratic potential $V=x^2/2$.

 Dyson suggested that on times of order $O(1/N)$ one would get equilibrium in the microscopic statistics by evolving a random system stochastically to one of the standard Gaussian matrix models depending on the symmetry class. In fact, one has a dichotomy of three time scales
\begin{enumerate}
    \item For time $t \gg 1$ one should get the global equilibrium, e.g., the global spectral density should approach that for the corresponding Gaussian ensembles. For Dyson Brownian motion with general $\beta$ and potential $V$, this was studied in \cite{GDBM1}.
    \item On scales of order $N^{-1}\ll\eta^* \ll 1 $, one should reach the equilibrium after running Dyson Brownian motion for time $t \gg \eta^*$. Namely, mesoscopic quantities of the form $ \sum_{i=1}^{N} f((\lambda_i -E)/
    \eta^*)$ for appropriate test functions should be universal.
    \item For the microscopic scale, i.e. the scale of order $O(1/N)$, and $\beta=2$, the microscopic eigenvalue distribution should be the same as that of the determinantal point process with the Sine kernel, $K(x,y) = \sin(x-y)/(x-y)$, provided one runs Dyson Brownian motion for $t\gg 1/N$. 
\end{enumerate}

The understanding of the local ergodicity of Dyson Brownian motion, i.e. the fact that the local statistics of Dyson Brownian motion reaches an equilibrium in short time, plays an important role in the proof of Wigner's original universality conjecture by Erd{\H o}s, Schlein and Yau \cite{MR2919197}. Their methods to prove universality for matrix models first involve proving a rigidity estimates of the eigenvalues, i.e. the eigenvalues are close to their classical locations, up to an optimal scale. This is the initial data for a Dyson Brownian motion input which interpolates the initial model to the Gaussian orthogonal/unitary ensembles. The second step is to show that Dyson Brownian motion reaches an equilibrium in a short time for local statistics. Since Dyson Brownian motion needs only to be run in short time, then the initial and final models can be compared. The last step compares the original random matrices with ones with small Gaussian component. For a good review about the general framework regarding this type of analysis, one can read the book by Erd{\H o}s and Yau \cite{YauErdosRMT}.

Of the three steps described in the previous section, the step that is the least robust is the proof of the rigidity estimates. This part is very model particular and, depending on the model in question, requires significant effort in trying to prove optimal estimates. Even in the most basic case of Wigner matrices, the concentration of the trace of the resolvent would require very precise cancellation in the form of what is known as the \emph{fluctuating averaging lemma} \cite{MR2871147}.  The proof of this type of cancellation uses very delicate combinatorial expansions involving iterated applications of the resolvent identity. For models even more complicated than the Wigner matrices, such lemmas are an intricate effort. 
A more general method that does not involve delving into the particulars of a model would be desirable; then we would be able to treat a general class of models uniformly.

A dynamical approach to proving rigidity using Dyson Brownian motion allows us to avoid technical issues relating to the particulars of a matrix model. This would allow us to avoid complicated combinatorial analysis and, in addition, allow us to treat models that  do not occur naturally with an associated matrix structure, such as the $\beta$-ensembles. In an earlier paper by B.Landon and the second author \cite{HL}, they proved the rigidity estimates for the bulk eigenvalues of Dyson Brownian motion. As a result, the optimal rigidity estimates are purely a consequence of the dynamics. The proof of rigidity is based on a comparison between the empirical eigenvalue process of Dyson Brownian motion and the deterministic measure valued process obtained as the solution of the associated McKean-Vlasov equation by using the method of characteristics. The difference in the corresponding Stieltjes transforms can be analyzed by estimates of Gronwall type.

There are substantial difficulties involved in performing a comparison between the solutions of Dyson Brownian motion and the associated McKean-Vlasov equation near the edge. In the bulk, one can derive sufficiently strong estimates by looking at the distance from the characteristics to the real line; this is thanks to the fact that we have strong bounds on the imaginary part of the Stieltjes transform in the bulk. Near the spectral edge, the power of these bounds decay and become too weak to prove optimal rigidity. In our case, we have to establish an equation determining the relative movement of our characteristics to the edge. The estimates of the Stieltjes transform of the empirical particle density near the edge heavily depend on this relative movement. The equation for the edge allows us to explicitly understand how the eigenvalues move from their initial position to the optimal region.

In addition to the rigidity estimates, another main innovation in this paper is the determination  of the correlation kernel for the Stieltjes transform of the empirical particle density of Dyson Brownian motion at mesoscopic scales near the edge. It allows us to prove a mesoscopic central limit theorem near the edge.
The mesoscopic central limit at the bulk for Wigner matrices was proven in \cite{MR1678012,MR1689027, mesoCLT1,mesoCLT3}, for $\beta$-ensemble in \cite{mesoCLT1} and for Dyson Brownian motion in \cite{HL,mesoCLTDBM,fix}.
As far as we know, the mesoscopic central limit theorem near the edge is new even for the Wigner matrices and $\beta$-ensembles. 
The dynamical method provides a unified approach to see how it emerges naturally, and allows us to see the universality of this correlation kernel. 

Combining with \cite{LandonEdge}, our rigidity estimates are used to give a proof of the local ergodicity of Dyson Brownian motion for general $\beta$ and potential at the edge, i.e. the distribution of extreme particles converges to Tracy-Widom $\beta$ distribution in short time. Our proof uses only the dynamics, and is independent of the matrix models. This is in alignment with Dyson's original vision on the nature of universality of the local eigenvalue statistics. A consequence of our edge universality result is a purely dynamical proof of the edge universality for $\beta$-ensembles with general potential.

\subsection{Related Results in the Literature}

Results for the McKean-Vlasov equation were first established by Chan\cite{MR1176727} and Rogers-Shi\cite{MR1217451}, who showed the existence of a solution for quadratic potentials $V$.
The McKean-Vlasov equation for general potentials $V$ was studied in detail in the works of Li, Li and Xie. In the works \cite{GDBM1} and \cite{GDBM2}, it was shown that under very weak conditions on $V$ the solution of the McKean-Vlasov equation will converge to an equilibrium distribution, that is dependent on the parameters $\beta$ and $V$ at times $t \gg 1$. The authors were able to interpret the time evolution under the McKean-Vlasov equation as a manner of gradient descent on the space of measures. This gives the complete description of Dyson Brownian motion at the macroscopic scale. 

For the microscopic scale, Dyson Brownian motion was studied in detail by Erdos, Yau and various coauthors across a multitude of papers \cite{MR3098073,MR2662426,MR2661171,MR2639734,MR2481753,MR2537522,MR2810797,MR3372074,MR2905803,MR2871147}. Specifically, from these works, it is known that for the classical ensembles $\beta=1,2, 4$ and quadratic potential, with the initial data given by the eigenvalues of a Wigner matrix, it is known that after $t \gg N^{-1}$ the local statistics of the particles are the same as those of the corresponding classical Gaussian ensembles. After this, the two works \cite{kevin3,Landon2016} established gap universality for the classical $\beta=1,2,4$ Dyson Brownian motion with general initial data, by using estimates established in a discrete DiGeorgi-Nash-Moser theorem in \cite{MR3372074}. Fixed Energy Universality required a sophisticated homogenization argument that allowed the comparison between the discrete equation and a continuous version; the results have been established in recent papers \cite{fixedBourgade,fix}. An extension of this interpolation at the edge was shown in \cite{LandonEdge}. These results were a key step in the proof of edge and bulk universality in various models. An alternative approach to Universality was shown, independently, in the works of Tao and Vu \cite{MR2784665}.

In the three-step strategy for proving universality, as developed by Erd{\H o}s, Yau and their collaborators, the first step is to derive a local law of eigenvalue density. This is a very technical and highly model dependent procedure. In the case of Wigner matrices, the proofs have been established in \cite{MR2481753,MR2537522,MR2981427,MR2871147,MR3109424}.
Local laws can be established for other matrix models in the bulk, such as the case of sparse random matrices \cite{MR3098073} and in deformed Wigner matrices \cite{MR3502606}. Establishing local laws near the edge are generally more involved; the case of correlated matrices was shown in \cite{ArkaZiliang,AltEdge1,ErdosEdge1}. Local laws for $\beta$-ensembles near the edge were considered in \cite{MR3253704} with the discrete analogue in \cite{HG}; the Wigner matrices were considered in \cite{MR3161313}.

\section{Background}

In this section, we will provide basic definitions and assumptions in our study of the $\beta$-Dyson Brownian motion and the associated McKean Vlasov equation. This section culminates in the analysis of solutions of the McKean-Vlasov equation via the method of characteristics and the proof of various important inequalities on the growth of the solution in time $t$ and the behavior of its characteristics $z_t(u)$. These bounds provide the basis for our later estimates on the edge rigidity of the $\beta$-Dyson Brownian motion near the edge. To make the argument clean, we make the following assumption on the potential $V$. We believe the main results in this paper hold for $V$ in $C^4$ as in \cite{HL}.

\begin{assumption}\label{a:asumpV}
We assume that the potential $V$ is an analytic function.
\end{assumption}

We denote $M_1(\bR)$ as the space of probability measures on $\bR$ and equip this space with the weak topology.  We fix a sufficiently small time $T>0$ and denote by $C([0,T], M_1(\bR))$ the space of continuous processes on $[0,T]$ taking values in $M_1(\bR)$.  It follows from \cite{GDBM1} that for all $\beta\geq 1$ and initial data $\bm\la(0)\in \overline{\Delta_N}$, there exists a strong solution $(\bm \la(t))_{0\leq t\leq T}\in C([0,T],\overline{\Delta_N})$ to the stochastic differential equation \eqref{DBM}. 

We recall the following estimates on the locations of extreme particles of $\beta$-Dyson Brownian motion from \cite[Proposition 2.5]{HL}.
\begin{proposition}\label{normbound}
Suppose $V$ satisfies Assumption \ref{a:asumpV}. Let $\beta\geq 1$, and $\bm \la(0)\in \overline{\Delta_N}$. Let $\fa$ be a constant  such that the initial data $\|\bm \la(0)\|_\infty\leq \fa$.  Then for a sufficiently small time $T>0$, there exists a finite constant $\fb=\fb(\fa, T )$, such that for any $0\leq t\leq T$, the unique strong solution of \eqref{DBM} satisfies:
\beq\label{e:normbound}
\bP(\max\{|\la_1(t)|,|\la_N(t)|\}\geq \fb)\leq e^{-N}.
\eeq
\end{proposition}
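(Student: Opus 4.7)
My plan is to establish the high-probability bound \eqref{e:normbound} through an exponential moment estimate for the extremal particles, combined with a stopping-time argument that turns the analytic (possibly non-confining) potential into a bounded-coefficient perturbation. By the reflection symmetry $\lambda \mapsto -\lambda$, which replaces $V(x)$ by $V(-x)$ while preserving Assumption \ref{a:asumpV}, it suffices to control $\lambda_N(t)=\max_i \lambda_i(t)$ from above. Fix $\fb>\fa$ to be chosen and introduce
\[
\tau = \inf\bigl\{t\geq 0 : \max_i |\lambda_i(t)| \geq \fb\bigr\};
\]
on $[0,\tau]$ the particles lie in $[-\fb,\fb]$, so analyticity of $V$ yields $|V'(x)|\leq C_V(\fb)$. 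The task reduces to bounding $\bP(\tau\leq T)\leq e^{-N}$ for $\fb$ chosen sufficiently large in terms of $\fa$ and $T$.

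For a parameter $\theta>0$ to be tuned, define the Lyapunov function
\[
G(t) \deq \sum_{i=1}^N \bigl(e^{\theta \lambda_i(t)} + e^{-\theta \lambda_i(t)}\bigr),
\]
and apply Ito's formula using \eqref{DBM}. The key structural estimate is the symmetrization identity
\[
\frac{\theta}{N}\sum_{i\neq j}\frac{e^{\theta\lambda_i}}{\lambda_i-\lambda_j}
=\frac{\theta}{2N}\sum_{i\neq j}\frac{e^{\theta\lambda_i}-e^{\theta\lambda_j}}{\lambda_i-\lambda_j}
\leq \theta^2\sum_i e^{\theta\lambda_i},
\]
which follows from the convexity inequality $(e^{\theta x}-e^{\theta y})/(x-y)\leq \theta\max(e^{\theta x},e^{\theta y})$. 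Combining this bound (and the analogous one for the $e^{-\theta\lambda_i}$ piece) with the bounded potential drift and the Ito correction yields
\[
dG(t)\leq C(\theta^2+\theta C_V)\,G(t)\,dt + dM_t, \qquad t\in[0,\tau],
\]
for a continuous martingale $M$ with bracket $d\langle M\rangle_t \leq C\theta^2 N^{-1}\,G(t)^2\,dt$.

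Passing to $\log G$ eliminates the $G$-factor in the noise: writing $N(t)=\int_0^t dM_s/G(s)$, one has $\langle N\rangle_T \leq C\theta^2 T/N$, and a standard exponential martingale estimate yields
\[
\bP\Bigl(\sup_{t\leq T\wedge\tau}\log G(t)\geq \log G(0)+C(\theta^2+\theta)T + x\Bigr)\leq \exp\bigl(-c_\beta N x^2/(\theta^2 T)\bigr).
\]
Since $\{\tau<T\}$ forces $\log G(\tau)\geq \theta\fb$ and $\log G(0)\leq \log(2N)+\theta\fa$, choosing $x \asymp \theta\sqrt{T}$ makes the right-hand side $\leq e^{-N}$; the claim then follows provided $\theta\fb > \log(2N)+\theta\fa+C(\theta^2+\theta)T+x$. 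A careful choice of $\theta$ (growing slowly with $N$, with the deterministic drift controlled by iterating the bound on a geometric sequence of short subintervals of $[0,T]$) yields a finite $\fb=\fb(\fa,T)$ independent of $N$.

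The main obstacle is this delicate parameter balance: the $\log N$ coming from the cardinality of the particle sum in $G$, the deterministic drift $\theta^2 T$ from the Coulomb interaction, and the martingale fluctuation of size $\theta\sqrt{T/N}$ must all be simultaneously dominated by $\theta \fb$, and producing a threshold $\fb$ depending only on $\fa$ and $T$ (and not on $N$) requires either a bootstrap on progressively shorter time windows or a sharper bound on the symmetrized repulsion. The proof is made possible by the shortness of $T$, which allows the $\exp(C\theta^2 T)$ factor to remain of order one for suitably chosen $\theta$, and by the fact that the convexity-based symmetrization yields a drift linear in $G$ rather than in a higher power.
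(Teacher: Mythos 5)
The paper does not actually prove this proposition; it is imported verbatim from \cite[Proposition 2.5]{HL}, so there is no in-paper argument to compare against. Your strategy -- a Lyapunov function $G(t)=\sum_i(e^{\theta\lambda_i}+e^{-\theta\lambda_i})$, a stopping time $\tau$ to bound $|V'|$, the symmetrization inequality to control the Coulomb drift, and an exponential martingale estimate for $\log G$ -- is a natural and standard starting point, and the individual computations (the convexity bound $(e^{\theta x}-e^{\theta y})/(x-y)\leq\theta\max(e^{\theta x},e^{\theta y})$, the bracket estimate $d\langle M\rangle\leq C\theta^2 N^{-1}G^2\,dt$) are correct.

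The gap you flag at the end is, however, genuine and is not resolved by either of the fixes you suggest. On $\{\tau<T\}$ you have $\log G(\tau)\geq\theta\fb$, and $\log G(0)\leq\log(2N)+\theta\fa$ is saturated when the initial particles cluster near $\pm\fa$. To get probability $\leq e^{-N}$ from the martingale term you need $x\gtrsim\theta\sqrt{T}$, so the threshold must satisfy
\[
\fb \;\geq\; \fa+\frac{\log(2N)}{\theta}+C\theta T+C\sqrt{T},
\]
and minimizing over $\theta$ gives $\fb-\fa\gtrsim\sqrt{T\log N}$, which diverges with $N$ and contradicts the claim that $\fb$ depends only on $\fa$ and $T$. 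The proposed bootstrap on subintervals of length $\delta$ does not help: the $\log(2N)/\theta$ cost comes from the cardinality of the particle system, not from the dynamics, so it is re-incurred at the start of every subinterval, while the drift $C\theta\delta$ also accumulates; summing $T/\delta$ increments and minimizing over $\theta,\delta$ reproduces the same $\sqrt{T\log N}$ growth. There is also a minor circularity you should make explicit: $C_V=C_V(\fb)$ depends on the constant being determined, which requires a self-consistency check valid only for $T$ small.

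To obtain an $N$-independent $\fb$ one has to track the extremal particle directly rather than a sum over all particles, so that the initial-data cost is $\theta\fa$ with no extra $\log N$. The usual device is a comparison of $\lambda_N(t)$ (and $-\lambda_1(t)$ by symmetry) with the solution of an auxiliary one-dimensional SDE whose drift dominates the Coulomb repulsion of the whole packet on the top particle once it has escaped past a fixed barrier -- this is the mechanism that makes the constant uniform in $N$. As written, your argument establishes a weaker statement with an $N$-dependent threshold, and the step that removes the $N$-dependence is missing.
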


Given a probability measure $\hat \rho_0$, we define the measure-valued process $\{\hat \rho_t\}_{t\geq 0}$ as the solution of the following equation
\begin{align}\label{eq:dm0}\begin{split}
\del_t \hat{m}_t(z)=\del_z \hat{m}_t(z)\left(\hat{m}_t(z)+\frac{V'(z)}{2}\right)+\frac{\hat{m}_t(z) V''(z)}{2}+\int_{\bR}g(z,x) \rd \hat{\rho}_t(x),
\end{split}
\end{align}
where
\beq \label{def:gzx}
g(z,x)\deq\frac{V'(x)-V'(z)-(x-z)V''(z)}{2(x-z)^2},\quad g(x,x)\deq\frac{V'''(x)}{4}.
\eeq
It is easy to see that for any fixed $z$, $g(z,x)$ is analytic in $\bC$ as a function of $x$; for any fixed $x$, $g(z,x)$ is analytic in $\bC$ as a function of $z$.

We analyze \eqref{eq:dm0} by the method of characteristics. Let
\beq \label{def:zt}
\del_t z_t(u)=-\hat m_t(z_t(u))-\frac{V'(z_t(u))}{2},\qquad z_0=u\in \bC_+,
\eeq
If the context is clear, we omit the parameter $u$, i.e., we simply write $z_t$ instead of $z_t(u)$. Plugging \eqref{def:zt} into \eqref{eq:dm0}, and applying the chain rule we obtain
\beq \label{e:mtzt}
\del_t \hat m_t(z_t(u))=\frac{\hat m_t(z_t(u)) V''(z_t(u))}{2}+\int_{\bR} g(z_t(u),x) \rd \rho_t(x).
\eeq
The behaviors of $z_s$ and $\hat{m}_s(z_s)$ are governed by the system of equations \eqref{def:zt} and \eqref{e:mtzt}.

{ As a consequence of Proposition \ref{normbound}, if the probability measure $\hat\rho_0$ is supported on $[-\fa, \fa]$, then there exists a finite constant $\fb=\fb(\fa, T)$, such that $\hat\rho_t$ are supported on $[-\fb, \fb]$ for $0\leq t\leq T$. We fix a large constant $\fr$. If $z_t(u)\in \bB_{2\fr}(0)$ and $\dist(z_t(u),[-\fb,\fb])\geq 1$, then 
\begin{align}\label{def:fr}
    |\del_t z_t|\leq 1+\frac{1}{2}\sup_{z\in \bB_{2\fr}(0)}|V'(z)|.
\end{align}
Therefore, for any $u\in \bB_\fr(0)$, we have $z_t(u)\in \bB_{2\fr}(0)$ for any $0\leq t\leq T$, provided $T$ is small enough.
}

We also frequently use the following estimates studying the imaginary part of characteristics. They were proven in \cite[Proposition 2.7]{HL}.
\begin{proposition} \label{prop:ImEst}
Suppose $V$ satisfies assumption \ref{a:asumpV}. Let $\beta\geq 1$, and $\bm \la(0)\in \overline{\Delta_N}$. Fix large constant $\fr>0$ Then for a sufficiently small time $T>0$, there exist constants depending on potential $V$ and $\fa$, such that the following holds. Fix any $0\leq s\leq t\leq T$ with $u\in \bB_{\fr}(0)$ and $\Im[z_t(u)]>0$, 
\begin{align}\begin{split}
    & e^{-C(t-s)} \Im[z_t] \leq \Im[z_s]\\
    & e^{-(t-s)C}\Im[\hat m_t(z_t)] \leq \Im[\hat m_s(z_s)] \leq e^{(t-s)C} \Im[\hat m_s(z_s)] \\
    & e^{-C(t-s)}\left(\Im[z_t]+(t-s)\Im[\hat m_t(z_t)]\right) \leq \Im[z_s] \leq e^{C(t-s)}\left(\Im[z_t]+(t-s)\Im[\hat m_t(z_t)]\right)\\
    & e^{-C(t-s)}(\Im[z_t]\Im[\hat m_t(z_t)]+(t-s)\Im[\hat m_t(z_t)]^2) \leq \Im[z_0]\Im[\hat m_0(z_0)] 
\end{split}\end{align}
\end{proposition}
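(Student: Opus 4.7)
The plan is to derive ordinary differential equations for $y_t := \Im[z_t(u)]$ and $w_t := \Im[\hat m_t(z_t(u))]$ by taking imaginary parts of \eqref{def:zt} and \eqref{e:mtzt}, yielding
\begin{align*}
y_t' = -w_t - \tfrac{1}{2}\Im[V'(z_t)],\qquad w_t' = \tfrac{1}{2}\Im[\hat m_t(z_t) V''(z_t)] + \Im\Bigl[\int g(z_t,x)\,\rd \hat\rho_t(x)\Bigr],
\end{align*}
and then to apply Gronwall-type estimates. Two structural ingredients drive everything. First, analyticity: $V$, $V'$, $V''$, and (via the Taylor expansion $g(z,x)=V'''(z)/4+(x-z)V''''(z)/12+\cdots$) $g(\cdot,x)$ are analytic on $\bB_{2\fr}(0)$ and real on the real axis, so Schwarz reflection gives $|\Im[V'(z_t)]|,\,|\Im[V''(z_t)]|,\,|\Im[g(z_t,x)]|\le C y_t$ uniformly for $z_t\in\bB_{2\fr}(0)$ and $x\in[-\fb,\fb]$. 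Second, since $\supp\hat\rho_t\subset[-\fb,\fb]$ by Proposition \ref{normbound} and $|z_t|\le 2\fr$, the explicit formula for $\hat m_t(z_t)$ gives the two-sided comparison
\[
  \frac{y_t}{(\fb+2\fr)^2}\,\le\, w_t=\int\frac{y_t}{|x-z_t|^2}\,\rd \hat\rho_t(x),\qquad |\Re[\hat m_t(z_t)]|\le \frac{(\fb+2\fr)\,w_t}{y_t}.
\]

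With these bounds, the cancellation $|\Re[\hat m_t(z_t)]\cdot \Im[V''(z_t)]|\le C(\fb+2\fr) w_t$ yields $|\Im[\hat m_t(z_t)V''(z_t)]|\le C w_t$, while $y_t\le(\fb+2\fr)^2 w_t$ converts the $O(y_t)$ contribution from $\Im[\int g\,\rd \hat\rho_t]$ into $O(w_t)$. Combining, $|w_t'|\le C w_t$, and Gronwall delivers (2). For (1), nonnegativity of $w_t$ forces $y_t'\le \tfrac12|\Im[V'(z_t)]|\le C y_t$, so $y_t\le e^{C(t-s)}y_s$. For (3), I would integrate the ODE for $y_t$ over $[s,t]$ to obtain $y_s-y_t=\int_s^t w_r\,\rd r+O\bigl(\int_s^t y_r\,\rd r\bigr)$, then use (2) to replace $w_r$ by $w_t$ up to a factor $e^{C(t-r)}$ and (1) to absorb the error integral; the result is $y_s$ comparable to $y_t+(t-s)w_t$ up to the stated exponential factors. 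For (4), I would consider $H(t):=y_tw_t+(t-s)w_t^2$, which is exactly conserved when $V=0$ (since there $w_t=w_s$ and $y_s=y_t+(t-s)w_t$). In the derivative $H'(t)=(y_tw_t)'+w_t^2+2(t-s)w_tw_t'$ the leading $-w_t^2$ coming from $(y_tw_t)'$ cancels the $+w_t^2$ produced by the explicit $(t-s)$-dependence, and the remaining terms are $O(H)$ using $y_t\le Cw_t$; Gronwall then yields $H(t)\le e^{C(t-s)}y_sw_s$.

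The main obstacle is the coupling of $y_t$ and $w_t$ at the edge: the $O(y_t)$ perturbation in $w_t'$ and the analogous term in $H'$ cannot be controlled by $w_t$ alone without a quantitative lower bound on $w_t$ in terms of $y_t$. The resolution is the a priori inequality $w_t\ge y_t/(\fb+2\fr)^2$, which follows directly from the compact support of $\hat\rho_t$ (Proposition \ref{normbound}) combined with the invariance $|z_t|\le 2\fr$; this forces $y_t\le Cw_t$ uniformly and makes every Gronwall step go through cleanly with constants depending only on $V$ and $\fa$.
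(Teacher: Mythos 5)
Your proposal is correct and follows what is essentially the standard route for this estimate (the paper cites \cite[Proposition 2.7]{HL} rather than reproving it, but the argument there is of the same type): take imaginary parts of \eqref{def:zt}, \eqref{e:mtzt} to get a coupled system for $y_t=\Im[z_t]$ and $w_t=\Im[\hat m_t(z_t)]$, use reality of $V$ on $\bR$ and compact support of $\hat\rho_t$ to show every perturbative term is $O(w_t)$ (via $y_t\le(\fb+2\fr)^2w_t$ and $|\Re[\hat m_t(z_t)]|\Im[V'']=O(w_t)$), and close with Gronwall; the identification of $H=y_tw_t+(t-s)w_t^2$ as the free ($V=0$) conserved quantity is exactly the right structure for the fourth inequality. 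One small point: for the third inequality you should run the Gronwall bound on $\phi(\sigma)=y_{t-\sigma}$ with the full inhomogeneity $\psi(\sigma)=w_{t-\sigma}$ rather than trying to ``use (1) to absorb the error integral,'' since (1) only gives a lower bound on $y_r$; the clean statement is $\phi'\le\psi+C\phi$ with $\psi(\sigma)\le e^{C\sigma}w_t$ (and the reverse for the lower bound). Note also that the displayed statement in the paper has two typos — the right-hand side of the second line should read $e^{(t-s)C}\Im[\hat m_t(z_t)]$, and the right-hand side of the fourth should be $\Im[z_s]\Im[\hat m_s(z_s)]$ — and your argument in fact establishes the corrected versions.
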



\section{Square Root Behavior Measures}

In the earlier work \cite{HL},  the bulk rigidity of $\beta$-Dyson Brownian motion was proved via a comparison of the empirical density $\rho_t$ with $\tilde \rho_t$, the solution of the associated McKean-Vlasov equation with $\rho_0$ as initial data. This is not a good choice for studying the spectral edge. In most applications, we take $\rho_0$ to be the empirical eigenvalue density of a random matrix, which itself is random. As a consequence, the solution $\tilde\rho_t$ of the associated McKean-Vlasov equation with $\rho_0$, is again a random measure. Even if we have a good control on the difference between $\rho_t$ and $\tilde \rho_t$, it does not tell us the locations of the extreme eigenvalues, unless we have a very precise control of $\tilde \rho_t$. Unfortunately the edge universality asks exactly the locations of the extreme eigenvalues.

In order to circumvent this problem, we comparison the empirical density $\rho_t$ with $\hat \rho_t$, the solution of the associated McKean-Vlasov equation with initial data $\hat\rho_0$ close to $\rho_0$. In most applications, we take $\hat\rho_0$ to be either the semi-circle distribution,
\begin{align}\label{e:semicircle}
    \rho_{\rm sc}(x)=\frac{\sqrt{[4-x^2]_+}}{2\pi},
\end{align}
or the Kesten-McKay distribution,
\begin{align}\label{e:km}
    \rho_{d}=\left(1+\frac{1}{d-1}-\frac{x^2}{d}\right)^{-1}\frac{\sqrt{[4-x^2]_+}}{2\pi}.
\end{align}
As one can see from the expressions of semi-circle distribution \ref{e:semicircle}, and Kesten-McKay distribution \ref{e:km}, they both have square root behavior at the spectral edge. It is believed that square root behavior is necessary for edge universality. For the remainder of the paper, we assume that the initial measure $\hat\rho_0$ has square root behavior in the following sense. 

\begin{definition}\label{d:stable}
We say a probability measure $\hat\rho_0$ has square root behavior at $E_0$ if the measure is supported in $(-\infty, E_0]$ and, in addition, there is some neighborhood $\mathcal{N}$ around $E_0$ such that its Stieltjes transform satisfies
\begin{align}\label{e:holom0}
\hat m_0(z)=A_0(z)+\sqrt{B_0(z)},
\end{align}
with  $A_0(z)$ and $B_0(z)$ analytic in $\mathcal{N}$ and with $z=E_0$ a simple root of $B_0(z)$.

\end{definition}
\begin{remark}\label{r:Imm0}
If $\hat \rho_0$ has square root behavior at right edge $E_0$, for any $z=E_0+\kappa+\ri\eta$, with $\eta>0$, it is easy to check that 
\begin{align}
    \Im[\hat m_0(z)]\asymp \Im\left[\sqrt{z-E_0}\right]\asymp \left\{
    \begin{array}{cc}
           \sqrt{|\kappa|+\eta}, & \kappa\leq 0\\
          \eta/\sqrt{|\kappa|+\eta}, &\kappa\geq 0.
    \end{array}
    \right.
\end{align}
\end{remark}

The Stieltjes transforms of semi-circle distribution and Kesten-McKay distribution are given by 
\begin{align}\begin{split}
&m_{\rm sc}(z)=\int_{\bR}\frac{\rho_{\rm sc}(x)\rd x}{x-z}=-\frac{z}{2}+\frac{\sqrt{z^2-4}}{2}   \\
&m_{d}(z)=\int_{\bR}\frac{\rho_{d}(x)\rd x}{x-z}=\left(1+\frac{1}{d-1}-\frac{z^2}{d}\right)^{-1}\left(-\frac{(d-2)z}{2d}+\frac{\sqrt{z^2-4}}{2}\right). 
\end{split}\end{align}
They both have square root behavior in the sense of Definition \ref{d:stable}. More generally, we have the following proposition.
\begin{proposition}
If $\hat \rho_0$ has an analytic density $\hat\rho_0$ in a small neighborhood of $E_0$, given by
\begin{align}
    \hat\rho_0(x)=S(x)\sqrt{[E_0-x]_+}, \quad E_0-\varepsilon \leq x\leq E_0+\varepsilon,
\end{align}
where $S(x)>0$ is analytic on $[E_0-\varepsilon,E_0+\varepsilon]$, then $\hat\rho_0$ has square root behavior in the sense of Definition \ref{d:stable}.
\end{proposition}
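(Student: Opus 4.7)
The plan is to re-express $\hat m_0$ in the coordinate $w=\sqrt{E_0-z}$, show that as a function of $w$ it extends holomorphically across $w=0$, and then separate even and odd parts. First, I split
\begin{equation*}
\hat m_0(z) = \int_{-\infty}^{E_0-\varepsilon}\frac{\hat\rho_0(x)}{x-z}\,\rd x + \int_{E_0-\varepsilon}^{E_0}\frac{S(x)\sqrt{E_0-x}}{x-z}\,\rd x.
\end{equation*}
The first integral is manifestly analytic in $z$ in a neighborhood of $E_0$ (the integrand is bounded and analytic in $z$ there) and thus contributes only to $A_0$. In the second integral, I substitute $x=E_0-t^2$ and set $R(t):=S(E_0-t^2)$, which extends to an even analytic function on a disk $|t|<\sqrt\varepsilon$. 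Using the identity $t^2/(w^2-t^2)=-1+w^2/(w^2-t^2)$ together with partial fractions and the evenness of $R$, the near integral is rewritten as $-2\int_0^{\sqrt\varepsilon}R(t)\,\rd t + w\,G(w)$, where
\begin{equation*}
G(w) := \int_{-\sqrt\varepsilon}^{\sqrt\varepsilon}\frac{R(t)}{w-t}\,\rd t, \qquad w^2=E_0-z.
\end{equation*}

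The crucial analytic step is to extend $G$ across the real segment. A priori $G$ is only holomorphic for $w\in\bC\setminus[-\sqrt\varepsilon,\sqrt\varepsilon]$, but since $R$ extends to a complex neighborhood of the segment, I can deform the contour of integration to a curve $\gamma$ in the half-plane opposite to $w$; Cauchy's theorem preserves the value, and the resulting formula $\int_\gamma R(t)/(w-t)\,\rd t$ is holomorphic in $w$ on a disk around $0$ disjoint from $\gamma$. Hence $H(w):=\hat m_0(E_0-w^2)$ is holomorphic at $w=0$. Writing $H(w)=A(w^2)+wC(w^2)$ as the decomposition into its even and odd parts (both analytic) and substituting $w=\sqrt{E_0-z}$ gives
\begin{equation*}
\hat m_0(z) = A_0(z) + C_0(z)\sqrt{E_0-z}, \qquad A_0(z):=A(E_0-z),\ C_0(z):=C(E_0-z),
\end{equation*}
both analytic in a neighborhood of $E_0$. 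Setting $B_0(z):=(E_0-z)C_0(z)^2$ then yields the desired form $\hat m_0=A_0+\sqrt{B_0}$ from Definition \ref{d:stable}, with $B_0$ analytic, $B_0(E_0)=0$, and $B_0'(E_0)=-C_0(E_0)^2$.

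The remaining and subtlest task is to verify that $E_0$ is actually a \emph{simple} root of $B_0$, i.e.\ $C_0(E_0)\neq 0$. By construction $C_0(E_0)=H'(0)$, which from the decomposition above equals $G(0)$ evaluated through the deformed contour. Writing $R(t)=R(0)+t\tilde R(t)$ with $\tilde R$ analytic, the $\tilde R$-piece contributes to $G(0)$ a path-independent, hence real, integral $-\int_\gamma \tilde R(t)\,\rd t = -\int_{-\sqrt\varepsilon}^{\sqrt\varepsilon}\tilde R(s)\,\rd s$. The $R(0)$-piece contributes $-R(0)\int_\gamma\rd t/t$; a direct parametrization of $\gamma$ as a half-turn around the origin into the opposite half-plane gives $\int_\gamma \rd t/t=\mp i\pi$, so $\Im G(0)=\pm\pi R(0)=\pm\pi S(E_0)$, which is nonzero by the hypothesis $S(E_0)>0$. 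Hence $C_0(E_0)\neq 0$ and $E_0$ is a simple zero of $B_0$. The only genuine technicality in the whole argument is careful bookkeeping of signs and branches in this Plemelj--Sokhotski-type jump computation; the rest is a clean complex-analytic unfolding of the square-root singularity.
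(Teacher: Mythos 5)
Your proof is correct; the paper itself states this proposition without proof (evidently treating it as routine after the semicircle and Kesten--McKay examples), so there is no in-paper argument to compare against. The route you take is the standard one: pull back the square-root singularity under $z=E_0-w^2$, turn the near part of the Stieltjes transform into $-\int R + wG(w)$ via $x=E_0-t^2$ and the even extension $R(t)=S(E_0-t^2)$, deform the Cauchy-kernel contour to get an analytic continuation of $G$ across $(-\sqrt\varepsilon,\sqrt\varepsilon)$, read off $A_0$ and $C_0$ from the even/odd parts, and use Sokhotski--Plemelj to see $\Im G(0)=\pm\pi S(E_0)\neq 0$, which gives the simple zero of $B_0(z)=(E_0-z)C_0(z)^2$ at $E_0$. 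This is exactly the mechanism behind the paper's examples, where $B_0$ comes out to a constant multiple of $z^2-4$.

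One small point of precision, which you yourself flag as the only subtlety: the phrase ``$H(w):=\hat m_0(E_0-w^2)$ is holomorphic at $w=0$'' is a slight abuse. The naive pullback, defined for $w\in\bC\setminus\bR$ by $\hat m_0(E_0-w^2)$, is an \emph{even} function of $w$ (since $G$ is odd off the segment) and carries a genuine Plemelj jump across $\bR$, so it does not itself extend through $w=0$. What your contour deformation actually produces is the analytic continuation $\tilde H$ of the restriction of this pullback to one half-plane of $w$ (equivalently, of $\hat m_0$ from one side of the cut $(-\infty,E_0]$). It is $\tilde H$, not the naive $H$, whose even/odd decomposition yields $A_0$ and $C_0$, and the branch of $\sqrt{E_0-z}$ appearing in $\hat m_0 = A_0 + C_0\sqrt{E_0-z}$ is then pinned down by which half-plane you continued from; the other branch produces the complex-conjugate side of the cut. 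The cleanest way to close the bookkeeping is to note that in either branch $(\hat m_0 - A_0)^2 = (E_0-z)C_0(z)^2 = B_0(z)$, which is single-valued and analytic, so $\hat m_0 = A_0 + \sqrt{B_0}$ holds with the appropriate branch as required by Definition \ref{d:stable}. This is a presentational issue, not a gap; the substantive steps (contour deformation, the jump computation, $S(E_0)>0\Rightarrow C_0(E_0)\neq 0$) are all sound.
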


One important consequence of our definition of square root behavior measure is the following proposition which shows us how the square root behavior is a property that propagates in time when solving the McKean-Vlasov equation. We postpone its proof to the Appendix \ref{a:TimStab}.

\begin{proposition}\label{prop:TimStab}
Let $\hat\rho_0$ be a probability measure which has square root behavior at the right edge $E_0$ in the sense of Definition \ref{d:stable}. Fix a sufficiently small time $T>0$, and let $(\hat \rho_t)_{t\in [0,T]}$ the solution of the  McKean-Vlasov equation \eqref{eq:dm0} with initial data $\hat\rho_0$. Then the measures $\hat{\rho}_t$ have square root behavior at the right edge $E_t$, for any $0\leq t\leq T$. The edge $E_t$ satisfies,
\begin{align}\label{e:edgeEqn}
\del_t E_t = -\hat m_t(E_t)-\frac{V'(E_t)}{2},
\end{align}
and it is Lipschitz in time, $|E_t - E_s| = O(|t-s|)$ for $0\leq s\leq t\leq T$.
As a consequence, $\hat \rho_t$ has a density in the neighborhood of $E_t$, given by
\begin{align}\label{e:ConstDef}
    \hat\rho_t(x)=(1+\oo(1))C_t\sqrt{[E_t-x]_+},\quad E_t-\varepsilon\leq x\leq E_t+\varepsilon.
\end{align}
The constants $C_t$ are Lipschitz in time, $|C_t - C_s| = O(|t-s|)$, for $0\leq s\leq t\leq T$.
\end{proposition}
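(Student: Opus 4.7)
The plan is to make the ansatz $\hat m_t(z) = A_t(z) + \sqrt{B_t(z)}$ with $A_t, B_t$ analytic in a uniform neighborhood of the edge, and to identify $E_t$ as the image under a characteristic flow of the simple zero of $B_t$. The crucial structural fact is that the nonlocal source $G_t(z) := \int g(z,x)\,\rd\hat\rho_t(x)$ is an entire function of $z$: since $V$ is analytic, $g(\cdot,x)$ is entire for each real $x$, and $\hat\rho_t$ is compactly supported by Proposition \ref{normbound}; thus the nonlocality cannot create a new square-root singularity, and only modifies the analytic piece $A_t$.

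First, substituting the ansatz into \eqref{eq:dm0} and equating the analytic part and the coefficient of $1/\sqrt{B_t}$ separately (unambiguous because $B_t$ has a simple zero at $E_t$) yields the coupled system
\begin{align*}
\partial_t A_t &= A_t'\bigl(A_t + \tfrac{V'}{2}\bigr) + \tfrac12 B_t' + \tfrac12 A_t V'' + G_t, \\
\partial_t B_t &= \bigl(A_t + \tfrac{V'}{2}\bigr) B_t' + (2A_t' + V'')\, B_t.
\end{align*}
The second equation is a linear transport whose characteristics $\dot z_t = -\bigl(A_t(z_t) + V'(z_t)/2\bigr)$ depend only on the analytic piece $A_t$ and therefore extend across the spectral branch cut through $E_0$. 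Along them, $B_t(z_t)$ satisfies the linear homogeneous ODE $\tfrac{\rd}{\rd t}B_t(z_t) = (2A_t'(z_t) + V''(z_t))B_t(z_t)$, so the simple zero of $B_0$ at $E_0$ is carried to a simple zero $E_t := z_t(E_0)$ of $B_t$; differentiating this ODE in the Lagrangian parameter $u$ at $u = E_0$ shows that $B_t'(E_t)$ stays strictly positive and is Lipschitz in $t$ for small times.

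Since $B_t(E_t) = 0$, the characteristic identity at $u = E_0$ reduces to $\dot E_t = -A_t(E_t) - V'(E_t)/2 = -\hat m_t(E_t) - V'(E_t)/2$, which is exactly \eqref{e:edgeEqn}; Proposition \ref{normbound} forces $|\hat m_t(E_t)|$ to remain uniformly bounded, yielding $|E_t - E_s| = \OO(|t-s|)$. For the density, near $E_t$ the expansion $B_t(z) = B_t'(E_t)(z - E_t) + \OO((z-E_t)^2)$ combined with $\hat\rho_t(x) = \pi^{-1}\Im \hat m_t(x + \ri 0^+) = \pi^{-1}\Im\sqrt{B_t(x+\ri 0^+)}$ (valid because $A_t$ is real on the reals near $E_t$) gives \eqref{e:ConstDef} with $C_t = \pi^{-1}\sqrt{B_t'(E_t)}$, and the Lipschitz property of $C_t$ follows from that of $B_t'(E_t)$.

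The delicate part is actually constructing $A_t, B_t$ as analytic solutions of the coupled system, uniformly on a fixed disk about $E_0$ for $t \in [0, T]$, and identifying $A_t + \sqrt{B_t}$ with the true Stieltjes transform $\hat m_t$. I would run an analytic iteration of Cauchy-Kovalevskaya type on a scale of slowly shrinking disks, treating $G_t$ as an external analytic forcing provided by the already-existing global McKean-Vlasov solution, and close the self-consistency loop by a contraction argument on $[0, T]$ using the a priori bounds from Proposition \ref{prop:ImEst}. Identification with $\hat m_t$ is then a uniqueness statement for the first-order PDE \eqref{eq:dm0}. The main obstacle is keeping the analyticity radius uniform in $t$ while closing the nonlocal loop; this is precisely why the analyticity of $V$, which makes $G_t$ entire, is essential in Assumption \ref{a:asumpV}.
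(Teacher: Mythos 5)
Your ansatz $\hat m_t = A_t + \sqrt{B_t}$ and the decomposition of \eqref{eq:dm0} into a coupled system for $(A_t, B_t)$ (with the nonlocal source treated as an external analytic forcing coming from the already-existing global McKean--Vlasov solution) is exactly the structure of the paper's proof in Appendix~\ref{a:TimStab}, and your ``Cauchy--Kovalevskaya iteration on shrinking disks'' is the same device the paper implements concretely as a Picard iteration on the Taylor coefficients $a_i(t), b_i(t)$ with explicit bounds $|a_i(t)|,|b_i(t)| \leq C M^i e^{Lti}/(i+1)^2$. Where you genuinely diverge is in how the edge is extracted once $(A_t, B_t)$ exist. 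The paper locates $E_t$ as the unique zero of $B_t$ near $E_0$ by Rouch\'e's theorem, using the coefficient bound $|b_i(t)-b_i(0)| \lesssim M^{i+1}(e^{Lt(i+1)}-1)/L$, and then derives \eqref{e:edgeEqn} by implicit differentiation of $B_t(E_t)=0$ via \eqref{e:BDReq}. You instead observe that the $B_t$-equation is a linear transport along the \emph{analytic} characteristics $\dot z_t = -(A_t(z_t)+V'(z_t)/2)$ (which, unlike the characteristics \eqref{def:zt} of the full problem, cross the branch cut), so the simple zero $E_0$ is pushed forward to $E_t := z_t(E_0)$; this gives \eqref{e:edgeEqn}, the positivity and Lipschitz continuity of $B_t'(E_t)$ (hence of $C_t$), and the simplicity of the zero all in one stroke, without Rouch\'e. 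That is a cleaner packaging of the same content. Two small cautions: Lipschitz continuity of $E_t$ requires bounding $\hat m_t(E_t) = A_t(E_t)$ by the analyticity of $A_t$, not directly by Proposition~\ref{normbound} (which only gives compact support of $\hat\rho_t$); and there is no ``self-consistency loop'' to close, as the paper notes explicitly---$\hat\rho_t$ exists globally in advance, so $R_t$ is genuinely a given forcing and the iteration is only over the $(d_i, b_i)$ coefficients. Finally, the identification of the locally constructed $A_t + \sqrt{B_t}$ with $\hat m_t$ deserves a sentence more than ``uniqueness for \eqref{eq:dm0}'': the local analytic object is only defined near $E_0$, and the matching is by propagating agreement at $t=0$ along the characteristics that stay in the neighborhood.
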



The following proposition studies the growth of the distance of the real part of the characteristics $z_t(u)$ to the edge $E_t$. This is the main proposition we use to give strong bounds on $|m_t - \hat{m}_t|$ close to the edge and it serves as one of our fundamental inequalities in next section. The square root behavior of the measures $\rho_t$ was used essentially to describe an equation for the growth of $E_t$ and to provide estimates for the Stieltjes transform.

\begin{proposition}\label{p:gap}
Let $\hat \rho_0$ be a probability measure having square root behavior in the sense of Definition \ref{d:stable}. Fix small $\varepsilon>0$ and a sufficiently small time $T>0$, and let $(\hat \rho_t)_{t\in [0,T]}$ the solution of the  McKean-Vlasov equation \eqref{eq:dm0} with initial data $\hat\rho_0$. 
If at some $t\ll 1$, the characteristics $z_t(u)=E_t+\kappa_t+\ri\eta_t$, with $0<\eta_t,\kappa_t\leq \varepsilon$, then there exists an universal constant $C$ such that for any $0\leq s\leq t$,
\begin{align}\label{e:gap}
\sqrt{\kappa_s}\geq \sqrt{\kappa_t}+C(t-s).
\end{align}
\end{proposition}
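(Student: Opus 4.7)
The plan is to compare the characteristic equation \eqref{def:zt} with the edge ODE \eqref{e:edgeEqn} and analyze their difference $w_t := z_t(u) - E_t = \kappa_t + \ri\eta_t$. Subtracting the two equations and Taylor-expanding $\hat m_t(z) - \hat m_t(E_t)$ using the decomposition $\hat m_t = A_t + \sqrt{B_t}$ with $B_t(E_t) = 0$ (which propagates in time by Proposition~\ref{prop:TimStab}), as well as $V'(z) - V'(E_t)$ around $E_t$, one obtains
\[
\partial_t w_t \;=\; -\alpha_t \sqrt{w_t} + \OO(|w_t|), \qquad \alpha_t \;:=\; \sqrt{B_t'(E_t)} > 0,
\]
where the square root is the principal branch. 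The constant $\alpha_t$ is proportional to the density coefficient $C_t$ of Proposition~\ref{prop:TimStab}, so it is Lipschitz in $t$ and bounded below by some $c_0 > 0$ uniformly on $[0,T]$.

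The key trick is to introduce $\phi_t := \sqrt{w_t}$ (principal branch). The chain rule turns the nonlinear ODE above into a nearly constant-drift equation
\[
\partial_t \phi_t \;=\; -\frac{\alpha_t}{2} + \OO(|\phi_t|).
\]
Since $|\phi_t| = |w_t|^{1/2} = \OO(\sqrt{\varepsilon})$ by hypothesis, a short bootstrap shows that $|\phi_u| = \OO(\sqrt{\varepsilon})$ for all $u \in [s,t]$, so integrating backward from $t$ to $s$ gives
\[
\Re \phi_s - \Re \phi_t \;\geq\; \tfrac{c_0}{2}(t-s) - K\sqrt{\varepsilon}(t-s), \qquad |\Im \phi_s - \Im \phi_t| \;\leq\; K\sqrt{\varepsilon}(t-s),
\]
where $K$ is an absolute constant. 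For $\varepsilon$ sufficiently small, the real part of $\phi$ grows at rate at least $c_0/4$ going backward, while the imaginary part barely moves.

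The final step converts this information back to $\kappa$. Writing $\phi_s = a_s + \ri b_s$, one has $\kappa_s = a_s^2 - b_s^2$, and the hypothesis $\kappa_t, \eta_t > 0$ forces $a_t > b_t > 0$, whence $a_t \geq \sqrt{\kappa_t}$. Expanding
\[
\kappa_s - \kappa_t \;=\; (a_s - a_t)(a_s + a_t) - (b_s - b_t)(b_s + b_t),
\]
substituting the bounds from the previous step, and using $b_t \leq a_t$ to dominate the cross terms, one arrives at
\[
\kappa_s \;\geq\; \kappa_t + 2C \sqrt{\kappa_t}(t-s) + C^2(t-s)^2 \;=\; \bigl(\sqrt{\kappa_t} + C(t-s)\bigr)^2,
\]
with $C = c_0/8$ (say), after absorbing errors of order $\OO(\sqrt{\varepsilon}(t-s))$. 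Taking square roots yields \eqref{e:gap}. The main obstacle is this last conversion: $\sqrt{\kappa_s}$ is not $\Re \phi_s$, and the naive inequality $\Re \phi_s \geq \sqrt{\kappa_s}$ points the wrong way, so one cannot simply bound $\sqrt{\kappa_s}$ in terms of $\Re \phi_s$. The resolution is to retain both real and imaginary parts of $\phi$ throughout and complete the square using $a_t \geq \sqrt{\kappa_t}$.
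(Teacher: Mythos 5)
Your proposal is correct, and it takes a genuinely different route from the paper. The paper works directly with the real part: it writes $\partial_s \kappa_s = -(\Re[\hat m_s(z_s)] - \hat m_s(E_s)) - \tfrac12(\Re[V'(z_s)] - V'(E_s))$, decomposes the Stieltjes-transform difference into two integrals against $\hat\rho_s$, and uses the square-root density $\hat\rho_s \asymp \sqrt{E_s - x}$ near the edge to extract the lower bound $\Re[\hat m_s(z_s)] - \hat m_s(E_s) \gtrsim \sqrt{\kappa_s}$ whenever $\kappa_s \geq 0$; this yields $\partial_s\kappa_s \leq -C\sqrt{\kappa_s}$, which is integrated after noting that $\kappa_s \geq 0$ propagates backward. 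Your proof instead stays with the complex variable $w_t = z_t - E_t$, uses the analytic decomposition $\hat m = A + \sqrt{B}$ with $B(E_t) = 0$ to derive $\partial_t w_t = -\alpha_t\sqrt{w_t} + \OO(|w_t|)$, and then linearizes via $\phi_t = \sqrt{w_t}$. The substitution makes the appearance of $\sqrt{\kappa}$ in the conclusion structural rather than accidental, and it sidesteps the need to separately argue that $\kappa_s \geq 0$ persists (since $\phi$ is always well-defined for $\eta_s > 0$). The price is the delicate final conversion from $(\Re\phi_s,\Im\phi_s)$ back to $\kappa_s = (\Re\phi_s)^2 - (\Im\phi_s)^2$, which, as you correctly identified, cannot be done by bounding $\sqrt{\kappa_s}$ against $\Re\phi_s$ alone; you need both $a_t = \Re\phi_t \geq \sqrt{\kappa_t}$ and $b_t = \Im\phi_t \leq a_t$ to complete the square. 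One small imprecision: the errors you absorb are not literally $\OO(\sqrt{\varepsilon}(t-s))$ but have the structure $\OO(\sqrt{\varepsilon})\,a_t(t-s) + \OO(\varepsilon)(t-s)^2$ — namely $|(b_s-b_t)(b_s+b_t)| \leq K\sqrt{\varepsilon}(t-s)(2a_t + K\sqrt{\varepsilon}(t-s))$ using $b_s+b_t \leq 2a_t + K\sqrt{\varepsilon}(t-s)$ — and this matching structure is exactly what lets them be absorbed into the main terms $\tfrac{c_0}{2}a_t(t-s)$ and $(\tfrac{c_0}{4})^2(t-s)^2$ for $\varepsilon$ small; an additive error of truly the form $\OO(\sqrt{\varepsilon}(t-s))$ would not be dominated when $\kappa_t \ll \varepsilon$.
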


\begin{proof}
We denote $z_s(u)=E_s+\kappa_s+\ri\eta_s$, for $0\leq s\leq t$. Thanks to \eqref{def:zt} and Proposition \eqref{prop:TimStab}, if $z_s(u)\in \bB_{2\varepsilon}(E_s)$, then there exists some universal constant $C$ such that $|\del_s z_s(u)|\leq C$. If we take $T$ sufficiently small, we will have that $z_s(u)\in \bB_{2\varepsilon}(E_s)$ for any $0\leq s \leq t$. In the following we prove that if $\kappa_s\geq 0$, then $\del_s\kappa_s\leq -C\sqrt{\kappa_s}$ for some universal constant $C$. Then the claim \eqref{e:gap} follows by integrating from $s$ to $t$, and we have $\kappa_s\geq 0$ for all $0\leq s\leq t$.


We recall the differential equation \eqref{e:edgeEqn} for the edge $E_s$
\begin{align}\label{e:edgeEqncopy}
\del_s E_s=-\hat m_s(E_s)-\frac{V'(E_s)}{2}.
\end{align}
We take real part of \eqref{def:zt}, and take difference with \eqref{e:edgeEqncopy}
\begin{align}\label{e:diff}
\del_s \kappa_s
=-\left(\Re[\hat m_s(z_s(u))]-\hat m_s(E_s)\right)
-\frac{\Re[V'(z_s(u))]-V'(E_s)}{2}.
\end{align}
For the first term in \eqref{e:diff}
\begin{align}\begin{split}\label{e:firstterm}
&\phantom{{}={}}\Re[\hat m_s(z_s(u))]-\hat m_s(E_s)
=(\Re[\hat m_s(z_s(u))]-\hat m_s(\Re[z_s(u)]))+(\hat m_s(\Re[z_s(u)])-\hat m_s(E_s))\\
&=\eta_s^2\int\frac{\rd \hat \rho_s(x)}{(\Re[z_s(u)]-x)((\Re[z_s(u)]-x)^2+\eta_s^2)}
+\kappa_s\int\frac{\rd \hat \rho_s(x)}{(\Re[z_s(u)]-x)(E_s-x)}.
\end{split}\end{align}
The purpose of the above decomposition is to write out the expressions for the Stieltjes transform in a way that we can easily compare the corresponding integral expressions. From the integral expression, we can compute the leading order behavior in terms of $\kappa_s$ and $\eta_s$ in order to get an equation. Thanks to Proposition \ref{prop:TimStab}, $\hat \rho_s$ has square root behavior. From Remark \ref{r:Imm0}, we have $\rd \hat\rho_s(x)/\rd x\asymp \sqrt{E_s-x}$ on a neighborhood of $E_s$, and we can estimate \eqref{e:firstterm}
\begin{align}
\Re[\hat m_s(z_s(u))]-\hat m_s(E_s)
\geq C\left(\frac{\eta_s^2}{(\kappa_s+\eta_s)^{3/2}}+{\sqrt{\kappa_s}}\right).
\end{align}
where $C>0$ is some universal constant. For the second term in \eqref{e:diff}
\begin{align}\begin{split}\label{e:secondterm}
\Re[V'(z_s(u))]-V'(E_s)
&=(\Re[V'(z_s(u))]-V'(\Re[z_s(u)]))+(V'(\Re[z_s(u)])-V'(E_s))\\
&\geq -C(\eta_s^2+\kappa_s).
\end{split}\end{align}
Uniformly for $0\leq s\leq t$, we have $\kappa_s,\eta_s\leq 2\varepsilon$. By taking $\varepsilon$ sufficiently small, it follows by combining \eqref{e:firstterm} and \eqref{e:secondterm}, there exists some constant $C>0$ such that
\begin{align}
\del_s \kappa_s\leq -C\sqrt{\kappa_s},
\end{align}
and the claim \eqref{e:gap} follows.

\end{proof}


\section{Rigidity Estimates}
We prove our edge rigidity estimates in this section. Roughly speaking if the initial data is regular on the scale $\eta^*$, then the optimal rigidity holds for time $t\geq C\sqrt{\eta^*}$, provided $C$ is large enough. We fix a smaller number $r>0$ and the control parameter { $M=(\log N)^{12}$}.

\begin{assumption}\label{a:initial}
Let $\bmla(0)=(\la_1(0),\la_2(0),\cdots,\la_N(0))\in \overline{\Delta_N}$, and $\|\bm \la(0)\|_\infty\leq \fa$ for some constant $\fa$.
We assume that the initial empirical density
\begin{align}
\rho_0=\frac{1}{N}\sum_{i=1}^{N}\delta_{\la_i(0)}
\end{align}
satisfies
\begin{enumerate}
\item $\la_1(0)\leq E_0+\eta^*$.
\item There exists a  measure $\hat\rho_0$, with square root behavior as defined in Definition \ref{d:stable} such that we have the estimate 
\begin{align}
    |m_0(z) - \hat{m}_0(z)|\leq  \frac{M}{N \eta}, \quad z\in \cD_0^{\rm in},
\end{align} 
and 
\begin{align}
    |m_0(z) - \hat{m}_0(z)|\leq  \frac{1}{M}\frac{1}{N \eta}, \quad z\in \cD_0^{\rm out},
\end{align} 
and 
\begin{align}
    |m_0(z) - \hat{m}_0(z)|\leq  \frac{M}{N }, \quad z\in \cD_0^{\rm far},
\end{align} 
where $m_0(z)$ and $\hat m_0(z)$ are the Stieltjes transform of $\rho_0$ and $\hat\rho_0$ respectively, and the domains $\cD_0^{\rm in}$, $\cD_0^{\rm out}$ and $\cD_0^{\rm far}$ are given by
\begin{align}
\begin{split}
 &\cD_0^{\rm in}\deq\left\{z\in \bC^+\cap \bB_{E_0}(r): \Im[z]\Im[\hat m_0(z)]\geq (\eta^*)^{3/2} \right\},\\
&\cD_0^{\rm out}\deq\{z\in \bC^+\cap \bB_{E_0}(r): \Re[z]\geq E_0+\eta^* \},\\
& \cD_0^{\rm far}\deq\{z\in \bC^+: \fr-1\leq \dist(z,\supp\hat \rho_0)\leq \fr+1\},
\end{split}\end{align}
where $\fr$ is a large constant as defined in \eqref{def:fr}, and $\bB_{E_0}(r)$ is the radius $r$ disk centered at $E_0$.
\end{enumerate}
\end{assumption}

\begin{remark}
We remark here that it is essential to control the difference of $m_0$ and $\hat{m}_0$ far away from the support of $\hat\rho_0$, i.e. on $\cD_0^{\rm far}$. The effect of the potential $V$ is to cause a long range interaction that will cause two solutions to diverge if we have no control in this region. To see this effect, one should notice that if we were to compare the linear statistics of two measures, the difference will change by no more than a constant factor.
\end{remark}

We define the following function
\begin{align}\label{e:deff}
f(t)=\left(\max\left\{\sqrt{\eta^*}-\fc t, M N^{-1/3}\right\} \right)^2,
\end{align}
where small constant $\fc>0$ will be chosen later.
It holds that $f(0)=\eta^*$, and it has similar behavior as the real part of characteristics as in \eqref{e:gap}, i.e it satisfies $\sqrt{f(s)}\leq \sqrt{f(t)}+\fc(t-s)$ for any $0\leq s\leq t$. We use this function in interpolating from weak eigenvalue rigidity at the edge at time $0$ to better eigenvalue rigidity at time $t$.

\begin{theorem} \label{Thm:EdgeRidgity}
Suppose $V$ satisfies Assumption \ref{a:asumpV} and the initial data $\bmla(0)$ satisfies Assumption \ref{a:initial}. For time $T=(\log N)^{-3}$, with high probability under the Dyson Brownian motion \eqref{DBM}, we have $\lambda_1(t) \leq E_t + f(t)$ for $t \in [0,T]$.

\end{theorem}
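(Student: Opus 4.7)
The plan is a continuous bootstrap of the three Stieltjes-transform estimates of Assumption \ref{a:initial}, propagated forward in time along the characteristic flow $z_t(u)$ from \eqref{def:zt}. Introduce a stopping time $\tau \leq T$ as the first time at which the analogous inequalities on the time-$t$ domains $\cD_t^{\rm in}, \cD_t^{\rm out}, \cD_t^{\rm far}$ (defined relative to the moving edge $E_t$) fail, with the constant $M$ replaced by $2M$. Assumption \ref{a:initial} gives $\tau>0$; the goal is to show $\tau\geq T$ with overwhelming probability, from which $\la_1(t)\leq E_t+f(t)$ will follow by a standard Helffer-Sj\"ostrand/counting-function argument at the edge applied to the estimate on $\cD_t^{\rm out}$.

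The dynamical mechanism is the one from \cite{HL}. Applying It\^o's formula to $m_t(z_t(u))$ along a characteristic and subtracting \eqref{e:mtzt}, the difference $\delta_t(u):= m_t(z_t(u)) - \hat m_t(z_t(u))$ satisfies an equation of the form
\begin{equation*}
d\delta_t(u) = \cA_t(u)\,\delta_t(u)\,dt + \cE_t(u)\,dt + dM_t(u),
\end{equation*}
where the linear coefficient $\cA_t$ involves $\partial_z m_t(z_t)$, the error $\cE_t$ collects lower-order deterministic terms in $V$ and $\hat m_t$, and the martingale $M_t$ has quadratic variation
\[
d\langle M(u)\rangle_t \lec \frac{1}{N^3}\sum_i \frac{dt}{|\la_i(t)-z_t(u)|^4} \lec \frac{\Im[m_t(z_t)]}{N^3\,\Im[z_t]^3}\,dt.
\]
Under the bootstrap this is $O(M/(N^2\Im[z_t]^2))\,dt$, so Burkholder-Davis-Gundy on a sufficiently fine net in $u$ gives $\sup_{u,\,t\leq \tau}|M_t(u)| \lec \sqrt{M}/(N\Im[z_t])$ with high probability, strictly smaller than the bootstrap threshold $M/(N\Im[z_t])$. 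Integrating the SDE from $0$ to $t$ and applying Gronwall together with the imaginary-part controls of Proposition \ref{prop:ImEst} then reduces the problem to bounding $\delta_0$ at the \emph{starting} point $u$.

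The decisive input is Proposition \ref{p:gap}. To control $m_t$ at a test point $w = E_t + f(t) + i\eta$ near the edge, one inverts the flow and picks $u$ so that $z_t(u)=w$; Proposition \ref{p:gap} forces $\sqrt{\kappa_0(u)} \geq \sqrt{f(t)} + \fc t$ on the real part of $u$, and for $t\geq C\sqrt{\eta^*}$ with $C$ large this places $u$ in $\cD_0^{\rm out}$ (or in $\cD_0^{\rm far}$ for the macroscopic control), where the initial estimate on $|\delta_0|$ is a factor of $M$ or $M^2$ \emph{stronger} than the bound we need to recover at $w$. This exchange---a strong out-of-spectrum estimate at time $0$ in return for a sharp near-edge estimate at time $t$---is the core of the edge argument: in the bulk analysis of \cite{HL}, the factor $1/(N\Im[z_t])$ from the imaginary-part bound was enough, but at the edge $\Im[\hat m_t(z_t)]=O(\sqrt{f(t)+\eta})$ is too small to close through imaginary parts alone, and the horizontal real-gap estimate of Proposition \ref{p:gap} is what substitutes for it. The main obstacle I anticipate is keeping $\cE_t(u)$ and the martingale uniformly controlled in $u$ across all three regimes as the edge $E_t$ drifts, since the domains are defined relative to $E_t$ and characteristics starting in one regime at time $0$ may end up in another at time $t$; this requires a careful net argument combined with the Lipschitz control on $E_t$ and $C_t$ from Proposition \ref{prop:TimStab}.
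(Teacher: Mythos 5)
Your skeleton matches the paper's: flow the characteristics, use Proposition \ref{p:gap} to push the starting point $z_0(u)$ into $\cD_0^{\rm out}$ or $\cD_0^{\rm far}$, close with Gronwall and Burkholder--Davis--Gundy. But the quantitative closing step is wrong, and this is exactly where the real content of the paper lies.

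The gap is in the martingale and deterministic error bounds along edge characteristics. You estimate the quadratic variation generically and arrive at $\sup |M_t(u)| \lec \sqrt{M}/(N\Im[z_t])$, and you claim this closes the bootstrap. But to eventually conclude $\la_1(t)\le E_t+f(t)$ you need control on $\cD_t^{\rm out}$, whose bootstrap threshold is $\sim 1/(M N\eta)$, and $\sqrt{M}/(N\eta)\gg 1/(MN\eta)$; the bootstrap does not close. Moreover, the particle-location step at the end forces $\eta$ down to $\eta\sim M^{-1/3}\kappa^{1/4}N^{-1/2}$ (so that a particle within $\eta$ of $E_t+\kappa$ would make $\Im[m_t]\ge 1/(2N\eta)$, while $\Im[\hat m_t]\sim\eta/\sqrt\kappa\ll 1/(N\eta)$), and at that scale a $1/(N\eta)$-order martingale error is fatal. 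What actually closes the argument is a genuine \emph{improvement} of the stochastic and deterministic bounds along the specific characteristics ending at such edge points: because these characteristics satisfy $\kappa_s\ge M^{3/2}\eta_s$ uniformly in $s$ (a consequence of combining the real-gap growth from Proposition \ref{p:gap} with the imaginary-part growth from Proposition \ref{prop:ImEst}; see \eqref{e:realim}), one has $\Im[\hat m_s(z_s)]\asymp \eta_s/\sqrt{\kappa_s}$, and the time-integrated QV collapses to $O\bigl((\log N)^2/(N\sqrt{\kappa_t\eta_t})\bigr)$ rather than $O(1/(N\eta_t))$; the short-range deterministic $(2-\beta)$-term similarly becomes $O(\log N/(N\kappa_t))$. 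The ratio $\sqrt{\eta_t/\kappa_t}\le M^{-3/4}$ is precisely the factor that turns a non-closable estimate into a closable one. Without this observation the argument cannot work, not merely in constants but in the exponent of $M$.

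A secondary point: the paper does not run a bootstrap on all of $\cD_t^{\rm out}$ at all $\eta$. The stopping time is restricted to $\cD_t^{\rm in}\cup\cD_t^{\rm far}$ plus a discrete family of characteristics $u_i$ chosen so that $z_{t_i}(u_i)=E_{t_i}+f(t_i)+\ri M^{-1/3}f(t_i)^{1/4}N^{-1/2}$. This avoids having to control $m_t$ uniformly on $\cD_t^{\rm out}$ at arbitrarily small $\eta$, which would in any case be false (the Stieltjes transform of an empirical measure is not uniformly close to a deterministic approximation once $\eta$ is well below $N^{-1}$). The final extraction of $\la_1(t)\le E_t+f(t)$ is then a direct imaginary-part contradiction at those points rather than a Helffer--Sj\"ostrand argument.
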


We define the spectral domains $\cD_t$. Roughly speaking the information of Stieltjes transform $m_t(z)$ on $\cD_t$ reflects the regularity of the empirical particle density $\rho_t$ on the scale $f(t)$. 
\begin{definition}
For any $t\geq 0$, we define the region $\mathcal{D}_t= \cD_t^{\rm in}\cup \cD_t^{\rm out}\cup \cD_t^{\rm far}$, where
\begin{align}\begin{split}\label{e:defcDt}
    &\mathcal{D}_t^{\rm in} \deq
    \{z\in \bC^+\cap \bB_{E_t}(r-t/\fc):  \Im[z]\Im[\hat m_t(z)]\geq f(t)^{3/2} \},\\
    &\mathcal{D}_t^{\rm out} \deq\left\{z\in \bC^+\cap \bB_{E_t}(r-t/\fc): \Re[z]\geq E_t+f(t)\right\}, 
    \\
    &\mathcal{D}_t^{\rm far}\deq \{z\in \bC^+: \fr-1+t/\fc\leq \dist(z,\supp\hat \rho_t)\leq \fr+1-t/\fc\},\\
\end{split}\end{align}

\end{definition}




For any $0\leq s\leq t$, the spectral domain $\cD_t$ is a subsect of the domain $\cD_s$ under the characteristic flow.

\begin{proposition}\label{p:domainic}
Suppose $V$ satisfies Assumption \ref{a:asumpV} and the initial data $\bmla(0)$ satisfies Assumption \ref{a:initial}. For any $0\leq s\leq t\ll r$, we have
\begin{align}
    z_s\circ z_t^{-1}(\cD_t)\subset \cD_s,
\end{align}
provided $N$ is large enough.
\end{proposition}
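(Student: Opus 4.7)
The proof goes by case analysis on which of the three pieces $\cD_t^{\rm out}$, $\cD_t^{\rm in}$, $\cD_t^{\rm far}$ contains $z_t$, and I verify that the image $z_s = z_s \circ z_t^{-1}(z_t)$ lies in $\cD_s$. The guiding principle is that the characteristic flow \eqref{def:zt} moves points away from the edge $E_\cdot$ and upward in the upper half plane as $s$ decreases from $t$, so the monotonic quantities $\Re[z]-E_\cdot$ and $\Im[z]\Im[\hat m_\cdot]$ grow at least as fast as the corresponding thresholds $f(\cdot)$ and $f(\cdot)^{3/2}$, provided the constant $\fc$ in \eqref{e:deff} and in the definition \eqref{e:defcDt} of the shrinking balls/shells is taken sufficiently small compared to the universal constants in Propositions \ref{prop:ImEst} and \ref{p:gap}. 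The ball constraint $|z_s-E_s|\leq r-s/\fc$ is automatic from $|\partial_s z_s|+|\partial_s E_s|=O(1)$ (combining \eqref{def:zt} with Proposition \ref{prop:TimStab}), since the ball widens by $(t-s)/\fc$ as $s$ decreases, which dominates the $O(t-s)$ drift once $\fc$ is small.

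For $z_t\in \cD_t^{\rm out}$, set $\kappa_t=\Re[z_t]-E_t\geq f(t)$. Proposition \ref{p:gap} gives $\sqrt{\kappa_s}\geq \sqrt{\kappa_t}+C(t-s)\geq \sqrt{f(t)}+C(t-s)$, while the definition of $f$ yields $\sqrt{f(s)}\leq \sqrt{f(t)}+\fc(t-s)$; choosing $\fc\leq C$ gives $\kappa_s\geq f(s)$, hence $z_s\in \cD_s^{\rm out}$.

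For $z_t\in \cD_t^{\rm in}$, if the backward flow pushes $\Re[z_s]\geq E_s+f(s)$ then $z_s\in \cD_s^{\rm out}$ and we are done; otherwise I verify $\Im[z_s]\Im[\hat m_s(z_s)]\geq f(s)^{3/2}$. The fourth bound of Proposition \ref{prop:ImEst}, translated in time, gives
\begin{align*}
\Im[z_s]\Im[\hat m_s(z_s)] \geq e^{-C(t-s)}\bigl(f(t)^{3/2}+(t-s)\Im[\hat m_t(z_t)]^2\bigr),
\end{align*}
while a Taylor expansion in $\sqrt{f}$ gives $f(s)^{3/2}-f(t)^{3/2}\lesssim \fc(t-s)f(t)$. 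The key input is therefore the lower bound $\Im[\hat m_t(z_t)]^2\gtrsim f(t)$. By Proposition \ref{prop:TimStab}, $\hat\rho_t$ has square-root behavior at $E_t$, so Remark \ref{r:Imm0} applies with $\hat m_t$ in place of $\hat m_0$. Three subcases cover $\cD_t^{\rm in}$: (i) $\kappa_t\in(-f(t),f(t)]$ near the edge, where $\eta_t\Im[\hat m_t(z_t)]\asymp \eta_t\sqrt{|\kappa_t|+\eta_t}\geq f(t)^{3/2}$ forces $\eta_t\gtrsim f(t)$, whence $\Im[\hat m_t(z_t)]\gtrsim\sqrt{f(t)}$; (ii) $\kappa_t\leq -f(t)$ with $z_t$ still close to $E_t$, where $\Im[\hat m_t(z_t)]\asymp \sqrt{|\kappa_t|+\eta_t}\gtrsim\sqrt{f(t)}$ directly; and (iii) $z_t$ deeper in the bulk, where $\Im[\hat m_t(z_t)]$ is bounded below by a constant. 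In each subcase $(t-s)\Im[\hat m_t(z_t)]^2\gtrsim (t-s)f(t)$, which dominates the gap $f(s)^{3/2}-f(t)^{3/2}$ once $\fc$ is small, yielding $\Im[z_s]\Im[\hat m_s(z_s)]\geq f(s)^{3/2}$ and hence $z_s\in \cD_s^{\rm in}$.

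For $z_t\in \cD_t^{\rm far}$, the velocity bound \eqref{def:fr} yields $|\partial_s z_s|=O(1)$, and the Lipschitz property of $\supp\hat\rho_\cdot$ from Proposition \ref{prop:TimStab} ensures $|\dist(z_s,\supp\hat\rho_s)-\dist(z_t,\supp\hat\rho_t)|=O(t-s)$. Taking $\fc$ small compared to this constant, the shell widens by $(t-s)/\fc$ on each side, absorbing the drift and keeping $z_s\in \cD_s^{\rm far}$. The main technical obstacle is the $\cD_t^{\rm in}$ case near the edge, where the subdivision in $\kappa_t$ combined with the square-root behavior of $\hat\rho_t$ is essential to produce the estimate $\Im[\hat m_t(z_t)]^2\gtrsim f(t)$; once this is in hand, all pieces reduce to choosing $\fc$ small compared to the universal constants supplied by the earlier propositions.
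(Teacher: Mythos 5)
Your decomposition of $\cD_t$ and the key observation that $\Im[\hat m_t(z_t)]^2\gtrsim f(t)$ whenever $z_t\in\cD_t^{\rm in}$ with $\Re[z_t]<E_t+f(t)$ are both correct and consistent with the paper. The problem is the Taylor estimate $f(s)^{3/2}-f(t)^{3/2}\lesssim \fc(t-s)f(t)$, which fails once $\fc(t-s)$ is comparable to or larger than $\sqrt{f(t)}$. Since $\sqrt{f(s)}\leq\sqrt{f(t)}+\fc(t-s)$, the correct bound is $f(s)^{3/2}-f(t)^{3/2}\leq 3\fc(t-s)f(t)+3\fc^2(t-s)^2\sqrt{f(t)}+\fc^3(t-s)^3$, and in the regime $\fc(t-s)\gg\sqrt{f(t)}$ the cubic term $\fc^3(t-s)^3$ dominates. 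Your gain $(t-s)\Im[\hat m_t(z_t)]^2\gtrsim(t-s)f(t)$ cannot absorb it, because $\fc^3(t-s)^3/\bigl((t-s)f(t)\bigr)=\fc^3(t-s)^2/f(t)\gg1$ there. This regime is not marginal: with $T=(\log N)^{-3}$ and $f(t)\geq M^2N^{-2/3}$ one has $\sqrt{f(t)}/\fc\ll T$, so most of the interval $[0,T]$ is affected.

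What closes this in the paper is a separate contradiction argument showing that the ``otherwise'' branch you invoke is actually vacuous in the problematic regime: for $t\geq\sqrt{\eta^*}/(3\fc)$ and $z_t$ within $\eta^*$ of the edge, assuming $\Re[z_0]\leq E_0+\eta^*$ one combines $\Im[\hat m_0(z_0)]\gtrsim\Im[z_0]/\sqrt{\eta^*+\Im[z_0]}$ (square-root behavior at the starting time), the propagation bounds of Proposition~\ref{prop:ImEst}, and crucially the \emph{upper} bound $\Im[\hat m_t(z_t)]\leq C\sqrt{\eta^*}$ (square-root behavior at time $t$) to reach a contradiction, so in fact $z_0\in\cD_0^{\rm out}$. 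Your proof uses only a lower bound on $\Im[\hat m_t]$ and has no analogue of this step, so the case $z_t\in\cD_t^{\rm in}$ with $\Re[z_t]$ near $E_t$ and $t-s$ long is left uncovered. (For $\kappa_t\geq 0$ one can instead invoke Proposition~\ref{p:gap} to get $\kappa_s\geq C^2(t-s)^2\geq f(s)$ directly, but for $\kappa_t<0$, where Proposition~\ref{p:gap} is unavailable, the contradiction argument is genuinely needed.)
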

\begin{proof}

By integrating \eqref{def:fr}, we get that $|z_t-z_0|=\OO(t)$ for any $z_t\in \cD_t$. It follows that $z_t^{-1}(\cD_t^{\rm far})\subset \cD_0^{\rm far}$, and $z_t^{-1}(\bC^+\cap\bB_{E_t}(r-t/\fc))\subset \bC^+\cap\bB_{E_0}(r)$, provided that $\fc$ is small enough.

For any $z_t\in  \{z\in \bC^+\cap \bB_{E_t}(r-t/\fc):  \Re[z]\geq E_t+f(t) \}$, let $z_s=E_s+\kappa_s+\ri\eta_s$ for $0\leq s\leq t$. By the definition of $f(t)$ in \eqref{e:deff}, we have
\begin{align}\label{e:ksbound0}
\sqrt{f(s)}\leq \fc(t-s)+\sqrt{f(t)}= \fc(t-s)+\sqrt{\kappa_t}\leq \sqrt{\kappa_s}-\fc(t-s),
\end{align}
provided that $\fc\leq C/2$, where $C$ is the constant in \eqref{e:gap}. We can rearrange \eqref{e:ksbound0} to get
\begin{align}\label{e:ksbound}
\kappa_s\geq f(s)+\fc\sqrt{\kappa_s}(t-s).
\end{align}
As a consequence, we have $\Re[ z_0]\geq E_0+f(0)=E_0+\eta^*$ and  $z_t^{-1}(\cD_t^{\rm out})\subset \cD_0^{\rm out}$.

 Thanks to Proposition \ref{prop:ImEst}, if $z_t\in \cD_t^{\rm in}$ and $\Re[z_t]\leq E_t+f(t)$, we have
\begin{align}\begin{split}
    \Im[z_0]\Im[\hat m_0(z_0)]
    &\geq e^{-tC}(\Im[z_t]\Im[\hat m_t(z_t)]+t\Im[\hat m_t(z_t)]^2)\\
    &\geq e^{-tC} (f(t)^{3/2}+t\Im[\hat m_t(z_t)]^2)\geq (\eta^*)^{3/2},
\end{split}\end{align}
provided $t\leq \sqrt{\eta^*}/(3\fc)$ or $\Re[z_t]\leq E_t-\eta^*$. For $t\geq \sqrt{\eta^*}/(3\fc)$, in fact we have $z_t^{-1}(\bC^+\cap\bB_{E_t}(\eta^*))\subset \cD_*^{\rm out}$. We prove it by contradiction. Say if there exists some $z_t\in \bC^+\cap\bB_{E_t}(\eta^*)$, such that $\Re[z_0]\leq E_t+\eta^*$.
By our assumption that $\hat\rho_t$ has square root behavior, we have $\Im[\hat m_t(z_t)]\leq C\sqrt{\eta^*}$.
Thanks to Proposition \ref{prop:ImEst}, we have $\Im[z_0]\geq e^{-tC}(\Im[z_t]+t\Im[\hat m_t(z_t)])\geq e^{-tC}t\Im[\hat m_t(z_t)]$, and thus
\begin{align}\begin{split}
     \Im[\hat m_t(z_t)]\geq e^{-Ct}\Im[\hat m_0(z_0)]\geq 
    \frac{e^{-Ct}}{C}\frac{\Im[z_0]}{\sqrt{\eta^*+\Im[z_0]}}\geq\frac{e^{-2Ct}}{C} \frac{t\Im[\hat m_t(z_t)]}{\sqrt{\eta^*+e^{-tC}t\Im[\hat m_t(z_t)]}},
\end{split}
\end{align}
which is impossible if $t\geq \sqrt{\eta^*}/(3\fc)$, and $\fc$ is sufficiently small. This finishes the proof of Proposition \ref{p:domainic}

\end{proof}

The following proposition gives optimal bulk estimate of $m_t$, i.e. on the spectral domain $\cD_t^{\rm in}\cup \cD_t^{\rm  far}$. 
\begin{proposition}
\label{p:rigidity}
Suppose $V$ satisfies the Assumption \ref{a:asumpV}. Fix time $T=(\log N)^{-3}$. For any initial data $\bmla(0)$ satisfies Assumption \ref{a:initial}, uniformly for any $0\leq t\leq T$, and $w\in \dom_t^{\rm in}\cup \dom_t^{\rm far}$ 
there exists a set $\Omega$ that occurs with overwhelming probability on which the following estimate holds: if $w\in \cD_t^{\rm in}$
\beq \label{e:diffmm}
|m_t(w)- \hat m_t(w)|\leq \frac{M}{N\Im[w]}, 
\eeq
if $w\in \cD_t^{\rm far}$
\beq \label{e:diffmm2}
|m_t(w)- \hat m_t(w)|\leq \frac{M}{N}.
\eeq
\end{proposition}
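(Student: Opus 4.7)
The plan is to propagate the initial Stieltjes transform estimate of Assumption \ref{a:initial} forward along the characteristic flow, following and refining the bulk argument of \cite{HL}. For any $w\in \cD_t^{\rm in}\cup\cD_t^{\rm far}$, set $u \deq z_t^{-1}(w)$; Proposition \ref{p:domainic} places $u\in\cD_0^{\rm in}\cup\cD_0^{\rm far}$ and keeps $z_s(u)\in\cD_s$ for every $0\leq s\leq t$. I would track the scalar
\begin{align*}
D_s(u) \deq m_s(z_s(u)) - \hat m_s(z_s(u))
\end{align*}
along the flow and close a bootstrap for $|D_s(u)|$.

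To derive the equation for $D_s(u)$, I would apply It\^o's formula to $m_s(z_s(u))$ using the DBM dynamics \eqref{DBM}. The standard symmetrization identity
\begin{align*}
-\frac{1}{N^2}\sum_{i\neq j}\frac{1}{(\lambda_i-z)^2(\lambda_i-\lambda_j)} = m_s(z)\partial_z m_s(z) - \frac{1}{2N}\partial_z^2 m_s(z),
\end{align*}
the chain rule, and the characteristic ODE \eqref{def:zt} make the Burgers nonlinearity cancel the transport term. Writing the potential contribution via $g(z,x)$ from \eqref{def:gzx} to match \eqref{e:mtzt} and then subtracting \eqref{e:mtzt} for $\hat m_s(z_s)$, one arrives at an SDE of the form
\begin{align*}
dD_s(u) = \left(\partial_z m_s(z_s) + \tfrac{1}{2}V''(z_s)\right) D_s(u)\, ds + \mathcal{E}_s\, ds + dM_s,
\end{align*}
where $\mathcal{E}_s \deq \int g(z_s,x)\,d(\rho_s - \hat\rho_s)(x) + \OO(N^{-1}\partial_z^2 m_s(z_s))$ and $M_s$ is a martingale with
\begin{align*}
d\langle M\rangle_s = \frac{2}{\beta N^2}\cdot\frac{1}{N}\sum_i \frac{ds}{(\lambda_i(s)-z_s(u))^4}.
\end{align*}
Since $g$ is analytic and $\hat\rho_s$ is compactly supported (Proposition \ref{normbound}), the integral piece of $\mathcal{E}_s$ is controlled via Cauchy's integral formula by $\sup_{\cD_s^{\rm far}}|D_s|$; this is precisely why Assumption \ref{a:initial} includes a bound on $\cD_0^{\rm far}$.

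Next I would run a continuous-in-time stopping-time argument. Let $\tau$ be the first $s\in[0,T]$ at which \eqref{e:diffmm} or \eqref{e:diffmm2} fails at some point of a sufficiently dense net in $\cD_s^{\rm in}\cup\cD_s^{\rm far}$; deterministic Lipschitz continuity of $m_s-\hat m_s$ on these domains upgrades pointwise to uniform control, so it suffices to show $\tau=T$ with overwhelming probability. On $\{s\leq\tau\}$, the martingale is estimated by Burkholder-Davis-Gundy (or a high-moment Markov bound): using $|\lambda-z|^{-4}\leq (\Im z)^{-2}|\lambda-z|^{-2}$ together with Proposition \ref{prop:ImEst}, one finds $\langle M\rangle_t\leq C(N\Im z_t)^{-2}$ on $\cD_t^{\rm in}$ and $\leq CN^{-2}$ on $\cD_t^{\rm far}$, yielding $|M_t|\leq (\log N)^2\sqrt{\langle M\rangle_t}$ with overwhelming probability — safely below the thresholds thanks to the margin inside $M=(\log N)^{12}$. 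The initial datum $D_0(u)$ transports compatibly using $\Im u \geq c\,\Im z_t$ from Proposition \ref{prop:ImEst} on $\cD_0^{\rm in}$, while the improved $1/M$ factor on $\cD_0^{\rm out}$ absorbs the worst-case small $\Im u$, and on $\cD_0^{\rm far}$ the $M/N$ bound transports directly.

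The main obstacle is controlling the growth contributed by the linear drift coefficient $\partial_z m_s(z_s)+V''(z_s)/2$, whose modulus is of order $(|\kappa_s|+\eta_s)^{-1/2}$ near the edge (from the square-root profile of $\hat\rho_s$, Proposition \ref{prop:TimStab}); a naive Gronwall amplification would produce a factor $N^{\OO(1)}$, too weak to close the bootstrap. The resolution exploits the refined edge estimate of Proposition \ref{p:gap}, $\sqrt{\kappa_s}\geq \sqrt{\kappa_t}+C(t-s)$, together with the matching barrier $\sqrt{f(s)}\leq\sqrt{f(t)}+\fc(t-s)$ and the geometry of the characteristic flow through the Jacobian $\partial_u z_s$, which by differentiating \eqref{def:zt} satisfies $\partial_s\log(\partial_u z_s)= -(\partial_z\hat m_s(z_s)+V''(z_s)/2)$. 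Identifying the Gronwall factor with (up to small corrections) the inverse Jacobian $\partial_u z_0/\partial_u z_t$ turns its size into a geometric question about contraction/expansion of the flow near the edge, which combined with Proposition \ref{p:gap} and Proposition \ref{prop:ImEst} is of order one. The same gap inequality also forces any characteristic starting in $\cD_0^{\rm out}$ (where only the weaker $1/(MN\Im u)$ initial bound is available) to remain outside $\cD_t^{\rm in}$, so the strong/weak split of the initial estimates propagates consistently with the target bounds \eqref{e:diffmm}--\eqref{e:diffmm2}.
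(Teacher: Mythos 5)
Your argument tracks the paper's proof closely: the same characteristic-flow reduction of the SDE, the same stopping-time/net bootstrap, the same BDG bound for the martingale, the same contour-integral control of the $g$-term (this is precisely the paper's Proposition~\ref{prop:HFbound}, and is one of the two modifications the paper makes relative to \cite{HL}), and Gronwall with the initial data transported along the flow (the other modification). Your identification of the Gronwall factor with the inverse flow Jacobian, $\exp\bigl(\int_s^t(\partial_z\hat m_q(z_q)+V''(z_q)/2)\,dq\bigr)=\partial_u z_s/\partial_u z_t$, is a nice reformulation of what the paper gets by estimating $\int \Im\hat m_s/\Im z_s\,ds\approx \log(\Im z_0/\Im z_t)$ directly; they are equivalent, and neither actually requires Proposition~\ref{p:gap} for this proposition (that gap estimate is used in the stronger Theorem~\ref{Thm:EdgeRidgity}, not here).

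One substantive misstatement: you claim that the gap inequality forces characteristics starting in $\cD_0^{\rm out}$ to remain outside $\cD_t^{\rm in}$. This is false, and contradicts both the proof of Proposition~\ref{p:domainic} and your own preceding sentence. Indeed, the proof of Proposition~\ref{p:domainic} explicitly shows that for $t\geq \sqrt{\eta^*}/(3\fc)$, any $z_t\in\bC^+\cap\bB_{E_t}(\eta^*)$ — which includes points of $\cD_t^{\rm in}$ close to the edge — has $z_0\in\cD_0^{\rm out}$. This is precisely why Assumption~\ref{a:initial} imposes the improved $\tfrac{1}{M}\cdot\tfrac{1}{N\eta}$ bound on $\cD_0^{\rm out}$: characteristics that start outside the spectrum are pulled toward the edge and after time $\gtrsim\sqrt{\eta^*}$ can land in $\cD_t^{\rm in}$, and the $1/M$ improvement is exactly what makes the transported initial error $1/(MN\Im z_t)$ strictly smaller than the target $M/(N\Im z_t)$. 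You do handle this case correctly in the sentence before, so the slip does not undermine the proof, but the last clause should be deleted or corrected since it would, if true, render the $\cD_0^{\rm out}$ hypothesis superfluous.
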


The proof of proposition \ref{p:rigidity} follows the same argument as \cite[Theorem 3.1]{HL}, with two modifications. Firstly, when we use Gronwall inequality, we need to take care of the error from the initial data, i.e. $m_0(z)-\hat m_0(z)\neq 0$. This is where our Assumption \ref{a:initial} comes into play. Secondly, we estimate the error term involving the potential $V$ using a contour integral.
\begin{proof}[Proof of Proposition \ref{p:rigidity}]

By Ito's formula, $ m_s(z)$ satisfies the stochastic differential equation
\begin{align}\begin{split}\label{eq:dm}
\rd  m_s(z)= -&\sqrt{\frac{2}{\beta N^3}}\sum_{i=1}^N \frac{{\rm d} B_i(s)}{(\la_i(s)-z)^2}+ m_s(z)\del_z  m_s(z)\rd s \\
+&\frac{1}{2N}\sum_{i=1}^{N}\frac{V'(\la_i(s))}{(\la_i(s)-z)^2}\rd s+\frac{2-\beta}{\beta N^2}\sum_{i=1}^{N}\frac{\rd s}{(\lambda_i(s)-z)^3}.
\end{split}\end{align}
%
We can rewrite \eqref{eq:dm} as 
\begin{align}\label{eq:dm3}\begin{split}
\rd  m_s(z)=-&\sqrt{\frac{2}{\beta N^3}}\sum_{i=1}^N \frac{{\rm d} B_i(s)}{(\la_i(s)-z)^2}+\del_z  m_s(z)\left( m_s(z)+\frac{V'(z)}{2}\right)\rd s+\frac{ m_s(z)\del_z V'(z)}{2}\rd s\\+& \int_{\bR} g(z,x) \rd \rho_s(x)\rd s
+\frac{2-\beta}{\beta N^2}\sum_{i=1}^{N}\frac{\rd s}{(\la_i(s)-z)^3},
\end{split}
\end{align}
 $ g(z,w)$ is defined in \eqref{def:gzx}. 
Plugging \eqref{def:zt} into \eqref{eq:dm3}, and by the chain rule,  we have 
\begin{align}\label{e:tdmzt}\begin{split}
\rd  m_s(z_s)=-&\sqrt{\frac{2}{\beta N^3}}\sum_{i=1}^N \frac{{\rm d} B_i(s)}{(\la_i(s)-z_s)^2}+\del_z m_s(z_s)\left(m_s(z_s)-\hat{m}_s(z_s)\right)\rd s+\frac{ m_s(z_s)V''(z_s)}{2}\rd s\\
+ & 
\int_{\bR}  g(z_s,x)  \rd \rho_s(x)\rd s
+\frac{2-\beta}{\beta N^2}\sum_{i=1}^{N}\frac{\rd s}{(\la_i(s)-z_s)^3}.
\end{split}
\end{align}
It follows by taking the difference of \eqref{e:mtzt} and \eqref{e:tdmzt} that, 
\begin{align}\label{e:diffm}\begin{split}
\rd (m_s(z_s)-\hat{m}_s(z_s))=-&\sqrt{\frac{2}{\beta N^3}}\sum_{i=1}^N \frac{{\rm d} B_i(s)}{(\la_i(s)-z_s)^2}+\left( m_s(z_s)-\hat{m}_s(z_s)\right)\del_z \left( m_s(z_s)+\frac{V'(z_s)}{2}\right)\rd s\\
+& 
\int_\bR  g(z_s,x) (\rd \rho_s(x)-\rd \hat\rho_s(x))\rd s
+\frac{2-\beta}{\beta N^2}\sum_{i=1}^{N}\frac{\rd s}{(\la_i(s)-z_s)^3}.
\end{split}
\end{align}
 We can integrate both sides of \eqref{e:diffm} from $0$ to $t$ and obtain
\beq \label{eq:mzt}
m_t(z_t)- \hat{m}_t(z_t)=\int_0^t \left( \cE_1(s)\rd s+\rd \cE_2(s) \right) +(m_0(z_0) - \hat{m}_0(z_0)),
\eeq
where the error terms are
\begin{align}
\label{defcE1}\cE_1(s)=&\left(m_s(z_s)-\hat{m}_s(z_s)\right)\del_z \left( m_s(z_s)+\frac{V'(z_s)}{2}\right)+
\int_\bR  g(z_s,x) (\rd \rho_s(x)-\rd \hat\rho_s(x)),
\\
\label{defcE2}\rd \cE_2(t)=&\frac{2-\beta}{\beta N^2}\frac{\rd s}{(\lambda_i(s)-z_s)^3}-\sqrt{\frac{2}{\beta N^3}}\sum_{i=1}^N \frac{{\rd} B_i(s)}{(\la_i(s)-z_s)^2}.
\end{align}
We remark that $\cal E_1$ and $\cal E_2$ implicitly depend on $u$, the initial value of the flow $z_s(u)$.  The local law will eventually follow from an application of Gronwall's inequality to \eqref{eq:mzt}.

We define the following lattice on the upper half plane $\bC_+$, 
\beq\label{def:L}
\cal L=\left\{E+\ri \eta\in \dom_0^{\rm in}\cup \dom_0^{\rm out}\cup \dom_0^{\rm far}: E\in \bZ/ N^{3}, \eta\in \bZ/N^{3}\right\}.
\eeq
It follows from Propositions \ref{p:domainic}, $z_t^{-1}(\cD_t)\subset\dom_*^{\rm in}\cup \dom_*^{\rm out}\cup \dom_*^{\rm far}$, and for any and $w\in \dom_t$, there exists some lattice point $u\in \cal L\cap z_t^{-1}(\dom_t)$, such that
$
|z_t(u)-w|=\OO(N^{-3}).
$


We define the stopping time 
\begin{align}\begin{split}\label{stoptime}
\sigma\deq 
T
&\bigwedge 
\inf_{s\geq0}\left\{\|\bmla(s)\|_{\infty}\geq \fb\right\}\\
&\bigwedge
\inf_{s\geq0}\left\{\exists w\in \cD_s^{\rm in }: \left|m_s(w)-\hat m_s(w)\right|\geq \frac{M}{N\Im[w]}\right\}\\
&\bigwedge\inf_{s\geq0}\left\{\exists w\in \cD_s^{\rm far}: \left|m_s(w)-\hat m_s(w)\right|\geq \frac{M}{N}\right\}.
\end{split}\end{align}
By the same argument as in \cite[Proposition 3.8]{HL}, using Burkholder-Davis-Gundy inequality, there exists a set $\Omega$ of Brownian paths $\{B_1(s), B_2(s), \cdots, B_N(s)\}_{0\leq s\leq t}$, such that for any $0\leq s\leq t$, and $u\in \cL\cap z_t^{-1}(\cD_t^{\rm in})$,
\begin{align}\begin{split}\label{e:continuityarg}
\left|\int_0^{s\wedge \sigma} \rd \cE_2(s)\right|
\leq  \frac{(\log N)^2}{N\Im[z_{s\wedge\sigma}(u)]},
\end{split}
\end{align}
and $u\in \cL\cap z_t^{-1}(\cD_t^{\rm far})$,
\begin{align}\begin{split}\label{e:continuityarg}
\left|\int_0^{s\wedge \sigma} \rd \cE_2(s)\right|
\leq  \frac{(\log N)^2}{N}.
\end{split}
\end{align}


For the last term in \eqref{defcE1}, we rewrite it as a contour integral and bound it simply by its absolute value.
\begin{proposition} \label{prop:HFbound}
Under the assumptions of Theorem \ref{Thm:EdgeRidgity} for any $u \in z_t^{-1}(\cD_t)$ and $s\in [0,t]$ we have
\begin{equation}\label{e:HFbound}
    \left|\int_{\bR} g(z_{s \wedge \sigma}(u) ,x) (\rd \rho_{s\wedge \sigma}(x)-\rd \hat\rho_{s\wedge \sigma}(x)) \right| =\OO\left( \frac{M}{N}\right).
\end{equation}
\end{proposition}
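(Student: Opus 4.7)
The plan is to exploit the analyticity of $g(z,\cdot)$ to convert the integral against the signed measure $\rho_{s\wedge\sigma}-\hat\rho_{s\wedge\sigma}$ on the real line into a contour integral, placing the contour in the region $\cD_s^{\rm far}$ (together with its reflection across $\bR$), where the stopping-time definition \eqref{stoptime} already provides the strong a priori bound $|m_s-\hat m_s|\leq M/N$. The point is to trade the weak near-edge estimate for a strong far-field one.

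Fix $z = z_{s\wedge\sigma}(u)$. By Assumption \ref{a:asumpV}, $V$ is analytic in a large disk, and the apparent double pole of $g(z,\cdot)$ at $w=z$ is removable (the Taylor expansion gives $g(z,z)=V'''(z)/4$), so $w\mapsto g(z,w)$ is analytic in that disk. I would choose $\Gamma$ to be a simple closed counterclockwise contour, symmetric with respect to $\bR$, at uniform distance approximately $\fr$ from $[-\fb,\fb]$. For $\fr$ sufficiently large (in particular larger than $\fb$) and $T=(\log N)^{-3}$ sufficiently small, this $\Gamma$ lies in $\cD_s^{\rm far}$ together with its reflection for every $0\leq s\leq t$. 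By Cauchy's formula, using Proposition \ref{normbound} and the stopping time to confine $\supp\rho_{s\wedge\sigma},\supp\hat\rho_{s\wedge\sigma}\subset[-\fb,\fb]$,
\[
\int_{\bR} g(z,x)\,d(\rho_{s\wedge\sigma}-\hat\rho_{s\wedge\sigma})(x) = -\frac{1}{2\pi i}\oint_\Gamma g(z,w)\bigl(m_{s\wedge\sigma}(w)-\hat m_{s\wedge\sigma}(w)\bigr)\,dw.
\]

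On $\Gamma$ two uniform estimates hold: first, $|g(z,w)|=O(1)$, since $V',V''$ are bounded on the relevant compact region and $z$ itself remains bounded by \eqref{def:fr}; second, $|m_s(w)-\hat m_s(w)|\leq M/N$, by the stopping time bound (extended from $\cD_s^{\rm far}$ to its reflection via $m_s(\bar w)=\overline{m_s(w)}$ and likewise for $\hat m_s$). Multiplying by the $O(1)$ length of $\Gamma$ yields the claimed $O(M/N)$ estimate. The only conceptual obstacle is the choice of $\Gamma$: one needs $\fr$ large enough compared to $\fb$ so that the annulus defining $\cD_s^{\rm far}$ contains a usable contour, and $T$ small enough compared to $\fc$ so that this remains true uniformly in $s\in[0,T]$; both conditions are built into the setup, so no additional work is required.
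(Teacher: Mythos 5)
Your proposal is correct and follows essentially the same route as the paper: use the analyticity of $w\mapsto g(z,w)$ to rewrite the real-line integral as a contour integral over a curve at distance $\approx\fr$ from the support, observe that this curve lies in the region where the stopping time enforces $|m_s-\hat m_s|\leq M/N$, and bound trivially by length times sup. The one place you are actually more careful than the paper's exposition is in noting that $\cD_s^{\rm far}\subset\bC^+$ while the closed contour must dip into the lower half-plane, which you handle via the Schwarz reflection $m_s(\bar w)=\overline{m_s(w)}$; the paper leaves this implicit.
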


\begin{proof}
From our choice of the stopping time \eqref{stoptime}, we have both $\rho_{s\wedge \sigma}$ and $\hat \rho_{s\wedge \sigma}$ are supported on $[-\fb, \fb]$. Moreover, $g(z_{s\wedge \sigma}(u), x)$ is analytic in $x$, we can rewrite the integral in \eqref{e:HFbound} as a contour integral
\begin{align}
   \int_{\bR} g(z_{s \wedge \sigma}(u) ,x) (\rd \rho_{s\wedge \sigma}(x)-\rd \hat\rho_{s\wedge \sigma}(x)) =
    -\frac{1}{2\pi \ri}\oint_{\mathcal{C}_s} (g(z_{s \wedge \sigma}(u) ,w) ({m}_{s \wedge \sigma}(w) - \hat m_{s\wedge \sigma}(w)) \rd w
\end{align}
where $\mathcal{C}_s$ is a contour of distance $\fr$ away from the support of $\hat{\rho}_s$. Thanks to our definition of $\cD_s$ in \ref{e:defcDt}, we have $\cC_s\subset \cD_s$.
The above contour integral can be bounded as
\begin{align}\begin{split}
    &\phantom{{}={}}\left|\oint_{\mathcal{C}_s} (g(z_{s \wedge \sigma} ,w) ({m}_{s \wedge \sigma})(w) -\hat m_{s\wedge \sigma}(w)) \rd w\right| \\
    & \leq {\rm length}(\cC_s)\sup_{w\in \mathcal{C}_s}|g(z_{s \wedge \sigma} ,w)| |(\hat{m}_{s \wedge \sigma})(w) - m_{s\wedge \sigma}(w))| =\OO\left( \frac{M}{N}\right),
\end{split}\end{align}
where we use the fact that $g$ is bounded on the contour $\mathcal{C}_s$, the length of $\mathcal{C}_s$ is bounded, and we have rigidity along the contour $\mathcal{C}_s$.
\end{proof}

We plug \eqref{e:HFbound} and \eqref{e:continuityarg} into \eqref{eq:mzt}, on the event $\Omega$, for $u\in \cL \cap z_t^{-1}(\cD_t^{\rm in})$ we have
\begin{align}\begin{split}
    m_{t\wedge\sigma}(z_{t\wedge\sigma})-\hat m_{t\wedge \sigma}(z_{t\wedge \sigma})
    &=(m_0(z_0)-\hat m_s(z_0))+\OO\left(\frac{({t\wedge\sigma})M}{N}
+\frac{(\log N)^2}{N\eta_{t\wedge \sigma}}\right)\\
&+\int_0^{t\wedge\sigma}\left|\hat{m}_{s}(z_s)-m_s(z_s)\right|\left|\partial_z \left( m_s(z_s)+\frac{V'(z_s)}{2}\right)\right|\rd s.
\end{split}\end{align}
It follows by the Gronwall inequality, and same argument as in \cite{HL}, we get
\begin{align}
     |m_{t\wedge\sigma}(z_{t\wedge\sigma})-\hat m_{t\wedge \sigma}(z_{t\wedge \sigma})|\leq \frac{\oo(M)}{N\eta_{t\wedge\sigma}},
\end{align}
provided that $t\leq T=(\log N)^{-3}$. And similarly for $u\in \cL\cap z_t^{-1}(\cD_t^{\rm far})$, we have 
\begin{align}
     |m_{t\wedge\sigma}(z_{t\wedge\sigma})-\hat m_{t\wedge \sigma}(z_{t\wedge \sigma})|\leq \frac{\oo(M)}{N},
\end{align}
provided that $t\leq T=(\log N)^{-3}$.
Thus with high probability we have $\sigma=T$, and Proposition \ref{p:rigidity} follows.
\end{proof}

\begin{proof}[Proof of Theorem \ref{Thm:EdgeRidgity}]
Theorem \ref{Thm:EdgeRidgity} follows from a very precise estimate of the Stieltjes transform. More precisely, it follows from the following estimate 
\begin{align}\label{e:mtbound}
    |m_t(E_t+\kappa+\ri \eta)-\hat m_t(E_t+\kappa+\ri \eta)|\ll \frac{1}{N\eta},
\end{align}
where $\kappa\geq M^2 N^{-2/3}$ and $\eta=M^{-1/3}\kappa^{1/4}N^{-1/2}\geq M^{1/6}N^{-2/3}$, that there is no particle on the interval $[E_t+\kappa-\eta, E_t+\kappa+\eta]$. Thanks to our assumption \ref{a:initial} that $\hat\rho_0$ and $\hat \rho_t$ have square root behavior, and $\Im[\hat m_t(E_t+\kappa+\ri \eta)]\asymp \eta/\sqrt{\kappa+\eta}\ll 1/N\eta$. 
Then it follows that 
\begin{align}\label{e:sumerror}
    \Im[m_t(E_t+\kappa+\ri \eta)]=\frac{1}{N}\sum_{i=1}^N \frac{\eta}{(\la_i(t)-\kappa-f(t))^2+\eta^2}\le \frac{1}{N\eta}.
\end{align}
If there exists some $\la_i(t)$ such that $|\la_i(t)-E_t-\kappa|\leq \eta$, then the righthand side of \eqref{e:sumerror} is at least $1/(2N\eta)$. This leads to a contradiction.

In the following, we will use a stopping time argument to show estimates like \eqref{e:mtbound}. We let $t_i=i/N$, for $i\leq \lceil TN\rceil$ and $\{z_s(u_i)\}_{1\leq s\leq t_i}$ denote the characteristic flow starting at $u_i$ such that at time $t_i$, $z_{t_i}(u_i)=E_{t_i} + f(t_i)+ \ri M^{-1/3}f(t_i)^{1/4}N^{-1/2}$. 
Thanks to \eqref{e:gap}, for $0\leq t\leq t_i$, we have
\begin{align}\label{e:real}
    z_t(u_i)-E_t\geq \left(\sqrt{f(t_i)}+\fc(t_i-t)\right)^2.
\end{align}
Moreover, using Proposition \ref{prop:ImEst}
\begin{align}\label{e:imag}
    \Im[z_t(u_i)]\asymp \Im[z_{t_i}(u_i)]+(t_i-t)\Im[\hat m_{t_i}(z_{t_i}(u_i))]\asymp M^{-1/3}f(t_i)^{1/4}N^{-1/2}(1+(t_i-t)/\sqrt{f(t_i)}).
    \end{align}
where we used that $\hat \rho_{t_i}$ has square root behavior. It follows from comparing \eqref{e:real} and \eqref{e:imag} we get that
\begin{align}\label{e:realim}
    z_t(u_i)-E_t\geq M^{3/2}\Im[z_t(u_i)],
\end{align}
for any $0\leq t\leq t_i$. 

We now define the stopping time $\sigma$
\begin{align}\begin{split}\label{e:defsigma}
\sigma := T&\bigwedge\inf\{s:\lambda_1(s) - E_s\geq  f(s)\} \\
& \bigwedge\inf\left\{s:\exists i\leq \lceil TN\rceil, {\bf 1}_{s\leq t_i}|m_s(z_s(u_i)) - \hat{m}_s(z_s(u_i))|\geq \frac{1}{M^{1/4}} \frac{1}{N\Im[z_s(u_i)]}\right\}\\
 & \bigwedge \inf \left\{s:\exists w\in \cD_s^{\rm far}, |m_s(w)- \hat{m}_s(w)| \geq \frac{M}{N }\right \}.
\end{split}\end{align}
To get a more precise estimate of the Stieltjes transform $m_s(z_s(u_i))$, we need to upgrade the estimate \eqref{e:continuityarg}.  
The following proposition analyzes the short range deterministic term in \eqref{defcE2} for the edge terms
\begin{proposition}\label{l:thirdbound}
For any $0\leq t\leq t_i$,
 \begin{equation}
\frac{2- \beta}{\beta} \int_{0}^{t \wedge \sigma}  \frac{1}{N^2} \sum_{k=1}^{N}\frac{1}{|\lambda_{k}(s) - z_{s}(u_i)|^3}\rd s\leq  \frac{C \log N}{N \kappa_{t \wedge \sigma}},
 \end{equation}
 where $\kappa_t=\Re[z_t(u_i)]-E_t$ for $0\leq t\leq t_i$.
\end{proposition}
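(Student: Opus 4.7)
The plan is to control the integrand $\frac{1}{N^2}\sum_{k=1}^N |\lambda_k(s) - z_s(u_i)|^{-3}$ pointwise using information available on the stopping time $\sigma$, and then integrate in $s$ with care. First I would exploit the fact that on the event $\{s\leq \sigma\}$ we have $\lambda_k(s)\leq \lambda_1(s)\leq E_s+f(s)$ for every $k$, so that together with $\Re[z_s(u_i)]=E_s+\kappa_s$ one obtains the pointwise lower bound $|\lambda_k(s) - z_s(u_i)| \geq \max(\kappa_s - f(s),\, \Im[z_s(u_i)])$. Factoring $|\lambda_k - z_s|^{-3} = |\lambda_k - z_s|^{-2}\cdot |\lambda_k - z_s|^{-1}$, using the identity $\frac{1}{N}\sum_k |\lambda_k - z_s(u_i)|^{-2} = \Im[m_s(z_s(u_i))]/\Im[z_s(u_i)]$, and combining the stopping-time bound \eqref{e:defsigma} with the square-root behaviour of $\hat\rho_s$ (via Proposition~\ref{prop:TimStab} and Remark~\ref{r:Imm0}, which give $\Im[\hat m_s(z_s(u_i))]\asymp \Im[z_s(u_i)]/\sqrt{\kappa_s}$), I would arrive at
\begin{equation*}
\frac{1}{N^2}\sum_{k=1}^N \frac{1}{|\lambda_k(s) - z_s(u_i)|^3} \leq \frac{C}{N\sqrt{\kappa_s}\,\max(\kappa_s - f(s),\, \Im[z_s(u_i)])}.
\end{equation*}
The specific choice $\Im[z_{t_i}(u_i)] = M^{-1/3}f(t_i)^{1/4}N^{-1/2}$ is precisely calibrated so that the error from $|m_s - \hat m_s|$ does not destroy the main $\Im[z_s(u_i)]/\sqrt{\kappa_s}$ estimate for $\Im[m_s(z_s(u_i))]$.

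Next I would invoke \eqref{e:ksbound}, which yields $\kappa_s - f(s) \geq \fc\sqrt{\kappa_s}(t_i - s)$, and note that $\kappa_s$ is monotonically decreasing in $s$ (since the same computation giving \eqref{e:gap} produces $\partial_s \kappa_s \leq -C\sqrt{\kappa_s}$), so $\kappa_s \geq \kappa_{t\wedge\sigma}$ throughout $s\in[0,t\wedge\sigma]$. The $s$-integration then splits into two regimes depending on which term attains the maximum. In the regime $\fc\sqrt{\kappa_s}(t_i - s) \geq \Im[z_s(u_i)]$, the pointwise bound reduces to $C/(N\kappa_s(t_i - s))$, and
\begin{equation*}
\int_0^{t\wedge\sigma} \frac{C\,ds}{N\kappa_s (t_i - s)} \leq \frac{C\log(t_i/(t_i - t\wedge\sigma))}{N\kappa_{t\wedge\sigma}} \leq \frac{C\log N}{N\kappa_{t\wedge\sigma}}.
\end{equation*}
In the complementary regime where $\Im[z_s(u_i)]$ dominates the maximum, $s$ lies within $\OO(\Im[z_{t_i}(u_i)]/\sqrt{f(t_i)})$ of $t_i$, which forces $\kappa_{t\wedge\sigma}\asymp f(t_i)$, and the short-interval estimate contributes at most $C/(Nf(t_i))\asymp C/(N\kappa_{t\wedge\sigma})$.

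The main technical obstacle is the case analysis in the final integration step, which requires careful tracking of the relative sizes of $\kappa_s - f(s)$, $\Im[z_s(u_i)]$, $\sqrt{f(t_i)}$, and $t_i - s$ across the full integration window in order to confirm that all subregimes contribute at most $C\log N/(N\kappa_{t\wedge\sigma})$. A subsidiary difficulty is the behaviour near $s=t_i$, where $\kappa_s-f(s)$ vanishes and the $\Im[z_s(u_i)]$ branch of the maximum must be used instead; this is precisely where the calibration of $\Im[z_{t_i}(u_i)]$ in the definition of the characteristics becomes essential for keeping the contribution from $|m_s-\hat m_s|$ under control.
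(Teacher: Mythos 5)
Your proposal follows the paper's own proof step by step: pulling out one factor of $|\lambda_k - z_s|^{-1}$ and bounding the remaining sum by $\Im[m_s(z_s)]/\Im[z_s]$, upgrading $\Im[m_s]$ to $\Im[\hat m_s]\asymp \eta_s/\sqrt{\kappa_s}$ via the stopping time, bounding the distance to $\lambda_1(s)$ by $\kappa_s - f(s) + \eta_s$ using $\lambda_1(s)\leq E_s + f(s)$, invoking \eqref{e:ksbound} to get $\kappa_s - f(s)\geq \fc\sqrt{\kappa_s}(t_i - s)$, and finally using the monotonicity $\kappa_s\geq\kappa_{t\wedge\sigma}$ before integrating. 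The only cosmetic difference is that you phrase the last integration as a case split on which term in $\max(\kappa_s - f(s),\,\eta_s)$ dominates, while the paper simply keeps the sum $(\kappa_s)^{1/2}(t_i - s)+\eta_s$ in the denominator and observes directly that $\int_0^{t\wedge\sigma}\frac{\rd s}{(t_i-s)+\eta_s/\sqrt{\kappa_s}}=\OO(\log N)$; these are equivalent.
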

\begin{proof}
For simplicity of notations, we write $z_t(u_i)$ as $z_t$, and denote $\eta_t=\Im[z_t(u_i)]$ for $0\leq t\leq t_i$.
By the definition of $\sigma$, for $s\leq \sigma$, it holds $\la_1(s)\leq E_s+f(s)$.
 \begin{align}\begin{split}
 &\phantom{{}={}}\int_{0}^{t \wedge \sigma} \frac{1}{N^2} \sum_{k=1}^{N}\frac{1}{|\lambda_{k}(s) - z_{s}|^3} \rd s
 \leq  \frac{1}{N}\int_{0}^{t \wedge \sigma} \frac{ \text{Im}[m_s(z_s)]}{|\Re[z_s]-\lambda_1(s)+\ri\eta_s| \text{Im}[z_s]} \rd s \\
 &\leq  \frac{2}{N}\int_{0}^{t \wedge \sigma} \frac{\text{Im} [\hat{m}_s(z_s)]}{|\Re[z_s]-\lambda_1(s)+\ri\eta_s|\text{Im}[z_{s}]} \rd s
 \leq  \frac{C}{N} \int_{0}^{t \wedge \sigma} \frac{\eta_{s}/\sqrt{\kappa_{s}}}{(\kappa_s -f(s)+\eta_s) \eta_{s}}\rd s\\
&\leq \frac{C}{N}\int_{0}^{t \wedge \sigma} \frac{\rd s}{(\kappa_{s})^{1/2}((\kappa_{s})^{1/2}(t_i-s)+\eta_s)}\leq \frac{C}{N\kappa_{t\wedge \sigma}}\int_{0}^{t \wedge \sigma} \frac{\rd s}{(t_i-s)+\eta_s/\sqrt{\kappa_s}}\leq \frac{C\log N}{N\kappa_{t\wedge \sigma}},
\end{split}\end{align}
where in the last line we use \eqref{e:ksbound} and the increasing gap \eqref{e:gap} inequality, for any $0\leq s\leq t\wedge \sigma$,
\begin{align}
\kappa_s^{1/2}\geq \kappa_{t\wedge \sigma}^{1/2}+C(t\wedge \sigma-s)\geq \kappa_{t\wedge \sigma}^{1/2}.
\end{align}
\end{proof}

A similar analysis can be done to analyze the short range stochastic term in \eqref{defcE2} for the edge terms.
\begin{proposition}\label{l:StochasticBnd}
There exists a set $\Omega$, which holds with high probability, such that on $\Omega$ the following inequality holds for any $i\leq \lceil TN\rceil$
\begin{equation}\label{StochasticBnd}
\int_{0}^{t \wedge \sigma}\sqrt{\frac{2}{\beta N^3}} \sum_{k=1}^{N} \frac{\rd B_k(s)}{|z_s(u_i) - \lambda_k(s)|^2} ds \leq  \frac{C(\log N)^2}{N \sqrt{\kappa_{t \wedge \sigma} \eta_{t \wedge \sigma}}},
\end{equation}
 where $\kappa_t=\Re[z_t(u_i)]-E_t$ and $\eta_t=\Im[z_t(u_i)]$ for $0\leq t\leq t_i$.
\end{proposition}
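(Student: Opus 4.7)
The plan is to treat
$$M_t := \int_0^{t\wedge\sigma} \sqrt{\frac{2}{\beta N^3}} \sum_{k=1}^N \frac{\rd B_k(s)}{(\lambda_k(s) - z_s(u_i))^2}$$
as a continuous martingale, control its quadratic variation
$$[M]_{t\wedge\sigma} = \frac{2}{\beta N^3} \int_0^{t\wedge\sigma} \sum_{k=1}^N \frac{\rd s}{|\lambda_k(s) - z_s(u_i)|^4}$$
by a deterministic estimate, and then invoke the Burkholder--Davis--Gundy inequality (equivalently, the Dambis--Dubins--Schwarz representation of $M$ as a time-changed Brownian motion). The target $|M_{t\wedge\sigma}| \leq C(\log N)^2/(N\sqrt{\kappa_{t\wedge\sigma}\eta_{t\wedge\sigma}})$ will follow from a single-path estimate $[M]_{t\wedge\sigma} \leq C\log N/(N^2 \kappa_{t\wedge\sigma}\eta_{t\wedge\sigma})$ together with sub-Gaussian concentration for Brownian motion, augmented by a union bound over the characteristics $u_i$ and an $N^{-3}$-mesh continuity argument in $s$ analogous to the one leading to \eqref{e:continuityarg}; these are what produce the two powers of $\log N$ in the statement.

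For the pointwise bound on the summand, the analysis extends that of Proposition \ref{l:thirdbound} by one power. Before $\sigma$ we have $\lambda_1(s) \leq E_s + f(s)$, hence $\Re[z_s(u_i)] - \lambda_k(s) \geq \kappa_s - f(s) \geq c\sqrt{\kappa_s}(t_i - s)$ by \eqref{e:ksbound}. Extracting one factor $|\lambda_k - z_s|^2 \geq c\kappa_s(t_i-s)^2 + \eta_s^2$ from $|\lambda_k - z_s|^4$ and using the elementary identity $\sum_k |\lambda_k - z_s|^{-2} = N\Im[m_s(z_s)]/\eta_s$ together with the stopping-time rigidity $\Im[m_s(z_s(u_i))] \leq C\Im[\hat m_s(z_s(u_i))] \leq C\eta_s/\sqrt{\kappa_s}$ (the second inequality being the square-root edge behavior of $\hat \rho_s$ guaranteed by Proposition \ref{prop:TimStab}), one obtains
$$\sum_{k=1}^N \frac{1}{|\lambda_k(s) - z_s(u_i)|^4} \leq \frac{CN}{\sqrt{\kappa_s}\bigl(\kappa_s(t_i-s)^2 + \eta_s^2\bigr)}.$$

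The heart of the argument is the resulting time integral, which I would handle by a change of variables and a split by regimes. Using the growth estimate $\sqrt{\kappa_s} \asymp \sqrt{f(t_i)} + (t_i - s)$ from \eqref{e:gap} and $\eta_s \asymp \eta_{t_i}(1 + (t_i - s)/\sqrt{f(t_i)})$ from Proposition \ref{prop:ImEst}, and substituting the dimensionless variable $\tau = (t_i - s)/\sqrt{f(t_i)}$, the integrand reduces (up to constants) to $1/[(1+\tau)^3(f(t_i)^2\tau^2 + \eta_{t_i}^2)]$. Splitting the range into the three regimes $\tau \leq \eta_{t_i}/f(t_i)$, $\eta_{t_i}/f(t_i) \leq \tau \leq 1$, and $\tau \geq 1$, one integrates explicitly and, after inserting the dictionary $\kappa_{t\wedge\sigma} \asymp f(t_i)(1 + \tau_0)^2$ and $\eta_{t\wedge\sigma} \asymp \eta_{t_i}(1 + \tau_0)$ with $\tau_0 = (t_i - t\wedge\sigma)/\sqrt{f(t_i)}$, the answer matches $C/(\kappa_{t\wedge\sigma}\eta_{t\wedge\sigma})$ in each regime. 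The main technical obstacle is precisely this integration: the integrand is not monotone in $s$, and one has to balance the smallness of $\eta_s$ (worst near $s = t_i$) against the smallness of the edge gap $\Re[z_s] - \lambda_1(s) \geq c\sqrt{\kappa_s}(t_i - s)$ (worst near $s = 0$), which is what forces the careful case analysis.
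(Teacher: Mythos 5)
Your overall approach — treat the stochastic integral as a continuous martingale, bound its quadratic variation deterministically, and apply BDG/exponential-martingale concentration with a union bound — is the right one and matches the paper in spirit. The QV integrand estimate is the same as the paper's (extracting $\min_k|\lambda_k-z_s|^{2}\gtrsim \kappa_s(t_i-s)^2+\eta_s^2$, using $\sum_k|\lambda_k-z_s|^{-2}=N\Im m_s/\eta_s$ and $\Im m_s\lesssim \Im\hat m_s\asymp\eta_s/\sqrt{\kappa_s}$). Where you genuinely diverge is in two places, and I'll compare each.

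\textbf{Evaluating the time integral.} The paper does not change variables. It exploits two facts that hold along a single characteristic: $\kappa_s$ is monotone decreasing in $s$ (so $\kappa_s\geq\kappa_{t\wedge\sigma}$ for $s\leq t\wedge\sigma$) and $\Im\hat m_s(z_s)\asymp\eta_s/\sqrt{\kappa_s}$ is essentially constant in $s$. This lets it replace $\kappa_s^{3/2}$ by $\kappa_{t\wedge\sigma}^{3/2}$ and $\eta_s/\sqrt{\kappa_s}$ by $\eta_{t\wedge\sigma}/\sqrt{\kappa_{t\wedge\sigma}}$ inside the integral, after which $\int_0^{t\wedge\sigma}\bigl(t_i-s+\eta_{t\wedge\sigma}/\sqrt{\kappa_{t\wedge\sigma}}\bigr)^{-2}\rd s$ is integrated directly, with no case analysis. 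Your substitution $\tau=(t_i-s)/\sqrt{f(t_i)}$ followed by the three-regime split is correct (I checked the $\tau\leq\eta_{t_i}/f(t_i)$, $\eta_{t_i}/f(t_i)\leq\tau\leq1$, and $\tau\geq1$ cases), but the paper's route is shorter and avoids the non-monotonicity issue you flag as the ``main technical obstacle'' — that obstacle simply does not appear once you pull $\kappa_s^{3/2}$ out of the integral.

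\textbf{Uniformizing over time.} This is the bigger structural difference. The paper does \emph{not} use a fine $N^{-3}$ mesh in time. Instead it defines, for each $i$, the deterministic partial stopping times $t_i^k$ as the successive times at which $\kappa_s\eta_s$ halves (there are only $O(\log N)$ of them, since $\kappa_s\eta_s\gtrsim N^{-4/3}$). Because $\kappa_s\eta_s$ is monotone decreasing, the deterministic bound $[M]_{s\wedge\sigma}\leq C/(N^2\kappa_{t_i^k}\eta_{t_i^k})$ holds for all $s\leq t_i^k$, and the target denominator $\sqrt{\kappa_{t\wedge\sigma}\eta_{t\wedge\sigma}}$ is within a factor $\sqrt{2}$ of $\sqrt{\kappa_{t_i^k}\eta_{t_i^k}}$ on all of $[t_i^{k-1},t_i^k]$. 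So a single BDG application per block, plus a union over $O(N\log N)$ pairs $(i,k)$, finishes. Your mesh-in-$s$ proposal can be made to work, but you should make explicit (a) that $\kappa_s\eta_s$ is monotone decreasing — this is what turns the a priori random QV bound $C/(N^2\kappa_{s^*\wedge\sigma}\eta_{s^*\wedge\sigma})$ into the deterministic bound $C/(N^2\kappa_{s^*}\eta_{s^*})$ needed for the concentration inequality — and (b) that for each $t$ one must read off the estimate at the mesh point nearest to $t\wedge\sigma$, not to $t$, since the target denominator is evaluated at the stopped time. Without those two points the argument has a genuine logical gap; with them it goes through, at the cost of a union over $N^3$ mesh points rather than $O(\log N)$ dyadic blocks. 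The paper's adaptive blocks are the cleaner and more natural choice here precisely because the target bound is time-dependent through $\kappa_{t\wedge\sigma}\eta_{t\wedge\sigma}$, and the dyadic decomposition is designed to track exactly that quantity.
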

\begin{proof}
For simplicity of notations, we write $z_t(u_i)$ as $z_t$,
For a given $t_i$, we define a series of partial stopping times $0=t_i^0< t_i^1<t_i^2<\cdots$, as follows:
\begin{equation}\label{e:choosetik}
t^k_i = t_i \wedge \inf\{t> t^{k-1}_i : \kappa_{t} \eta_{t} < \kappa_{t^{k-1}_i} \eta_{t^{k-1}_i}/2\},\quad k=1,2,3,\cdots.
\end{equation}
Notice that since $\kappa_t$ and $\eta_t$ are finite and cannot be smaller than $N^{-2/3}$, for a given $t_i$, we have $t_i^k=t_i$ for $k\gtrsim \log N$.

We now apply the Burkholder-Davis Gundy inequality to our stochastic integral. The quadratic variation can be found as follows:
\begin{align}\begin{split}
&\phantom{{}={}}\int_{0}^{t^k_{i} \wedge \sigma} \frac{2}{\beta N^3} \sum_{k=1}^{N} \frac{\rd s}{|z_s - \lambda_k(s)|^4}
 \leq \frac{C}{N^2} \int_{0}^{t^k_i \wedge \sigma} \frac{\text{Im}[m_s(z_s)]}{\text{Im}[z_s] ((\kappa_s - f(s))^2 + (\eta_s)^2)} \rd s\\
& \leq \frac{C}{N^2} \int_{0}^{t^k_i \wedge \sigma} \frac{\text{Im}[\hat m_s(z_s)]}{\text{Im}[z_s] ((\kappa_s - f(s))^2 + (\eta_s)^2)} \rd s
\leq \frac{C}{N^{2}} \int_{0}^{t^k_i \wedge \sigma} \frac{ \eta_{s}/{\sqrt{\kappa_{s}}}}{\eta_{s}((\kappa_s)^{1/2}(t_i-s)+ \eta_{s})^2}\rd s \\
&\leq \frac{C}{N^{2}} \int_{0}^{t^k_i \wedge \sigma} \frac{\rd s}{(\kappa_{s})^{3/2}(t_i-s+\eta_s/(\kappa_s)^{1/2})^2}
\leq \frac{C}{N^{2}} \int_{0}^{t^k_i \wedge \sigma} \frac{\rd s}{(\kappa_{t^k_i \wedge \sigma})^{3/2}(t_i-s+\eta_{t^k_i \wedge \sigma}/(\kappa_{t^k_i \wedge \sigma})^{1/2})^2}\\
&\leq  \frac{C} {N^{2}} \frac{1}{\kappa_{t^k_i \wedge \sigma} \eta_{t^k_i \wedge \sigma}}
\end{split}\end{align}
where in the third inequality we use \eqref{e:ksbound}, and
\begin{align}
\frac{\eta_s}{\sqrt{\kappa_s}}\asymp
\Im[\hat m_s(z_s)]\asymp \Im[\hat m_{t_i^k\wedge \sigma}(z_{t_i^k\wedge \sigma})]\asymp \frac{\eta_{t_i^k\wedge \sigma}}{\sqrt{\kappa_{t_i^k\wedge \sigma}}}.
\end{align}
The Burkholder-Davis Gundy inequality implies that with high probability we must have
\begin{equation}\label{localineq}
\sup_{0\leq t \leq t^k_i } \left|\int_{0}^{t\wedge \sigma}\sqrt{\frac{2}{\beta N^3}} \sum_{k=1}^{N} \frac{\rd B_i(s)}{|z_s - \lambda_k(s)|^2} ds \right|\leq \frac{C(\log N)^2}{N \sqrt{\kappa_{t^k_i \wedge \sigma} \eta_{t^k_i \wedge \sigma}}}
\end{equation}
We define $\Omega$ to be the set of Brownian paths $\{B_1(s), \cdots, B_N(s)\}_{0\leq s\leq T}$ on which, \eqref{localineq} holds for all $k$. It follows from the discussion above,  $\Omega$ holds with high probability.
Therefore, for any $t\in[t_{i}^{k-1},t_i^k]$, the bounds \eqref{localineq} and our choice of $t_i^k$ \eqref{e:choosetik} yield that on $\Omega$,
\begin{align}\begin{split}
\left|\int_{0}^{t\wedge \sigma}\sqrt{\frac{2}{\beta N^3}} \sum_{k=1}^{N} \frac{\rd B_i(s)}{|z_s - \lambda_k(s)|^2} ds \right|\leq \frac{C(\log N)^2}{N \sqrt{\kappa_{t \wedge \sigma} \eta_{t \wedge \sigma}}}.
\end{split}
\end{align}
This finishes the proof of proposition \ref{l:StochasticBnd}.
\end{proof}

The last term in \eqref{defcE1} can be estimated by Proposition \ref{prop:HFbound}. Now let us return to the equation \eqref{eq:mzt} for the difference between $\hat{m}_t(z)$ and $m_t(z)$. Fix some $i\leq \lceil TN\rceil$, we denote $z_t=E_t+\kappa_t+\ri\eta_t =z_t(u_i)$ for any time $0\leq t\leq t_i$, thanks to Proposition \ref{l:thirdbound}, \ref{l:StochasticBnd} and \ref{prop:HFbound} we have

\begin{align}\begin{split}\label{e:globalest}
\left|\hat{m}_{ t \wedge \sigma}(z_{ t \wedge \sigma})-m_{ t \wedge \sigma}(z_{ t \wedge \sigma})\right|
\leq& \int_0^{ t\wedge\sigma}\left|\hat{m}_{s}(z_s)-m_s(z_s)\right|\left|\partial_z \left( \hat m_s(z_s)+\frac{V'(z_s)}{2}\right)\right|\rd s\\+&
\frac{C({ t\wedge\sigma})M}{N}
+\frac{C(\log N)^2}{N\sqrt{\eta_{ t \wedge \sigma} \kappa_{ t \wedge \sigma}}} +\left|\hat{m}_0(z_0) - m_0(z_0) \right|.
\end{split}\end{align}
Notice that for $s\leq  t\wedge \sigma$,
\begin{align}\label{e:aterm}
\left|\partial_z \left(\hat m_s(z_s)+\frac{V'(z_s)}{2}\right)\right|
\leq \frac{\text{Im}[ \hat m_s(z_s)]}{\text{Im}[z_s]}+C,
\end{align}
From the definition \eqref{e:defsigma} of $\sigma$, we have that $|m_s(z_s) - \hat{m}_s(z_s)|\leq \text{Im}[\hat{m}_s(z_s)]/\log N$, and it follows
\begin{align}\label{e:aterm2}
\left|\partial_z \left(\hat m_s(z_s)+\frac{V'(z_s)}{2}\right)\right|
\leq \frac{\text{Im}[ \hat m_s(z_s)]}{\text{Im}[z_s]}+C\leq \left(1+\frac{1}{\log N}\right)\frac{\text{Im}[ \hat{m}_s(z_s)]}{\text{Im}[z_s]}+C.
\end{align}
We denote the quantity,
\begin{align} \label{eqn:betabd}
\beta(s):= \left(1+\frac{1}{\log N}\right)\frac{\text{Im}[ \hat{m}_s(z_s)]}{\text{Im}[z_s]}+C= \text{O}\left(\frac{\text{Im}[ \hat{m}_s(z_s)]}{\text{Im}[z_s]}\right),
\end{align}
and rewrite \eqref{e:globalest} as
\begin{align}\begin{split}
\left|\hat{m}_{ t\wedge\sigma}(z_{ t\wedge\sigma})-m_{ t\wedge\sigma}(z_{ t\wedge\sigma})\right|
\leq& \int_0^{ t\wedge\sigma}\beta(s)\left|\hat{m}_s(z_s)-m_s(z_s)\right|ds\\+&
\frac{C({ t\wedge\sigma})M}{N}
+\frac{C(\log N)^2}{N\sqrt{\eta_{ t \wedge \sigma}\kappa_{ t \wedge \sigma}}}+\left|\hat{m}_0(z_0) - m_0(z_0) \right|.
\end{split}
\end{align}
By Gronwall's inequality, this implies the following estimate for any $ 0\leq t\leq t_i$
\begin{align}\begin{split}\label{e:midgronwall}
&\left|\hat{m}_{t\wedge\sigma}(z_{t\wedge\sigma})-m_{t\wedge\sigma}(z_{t\wedge\sigma})\right|
\leq \frac{C({t\wedge\sigma})M}{N}
+\frac{C(\log N)^2}{N\sqrt{\eta_{t\wedge \sigma} \kappa_{t \wedge \sigma}}}+\left|\hat{m}_0(z_0) - m_0(z_0) \right|\\
+&\int_0^{t\wedge\sigma}\beta(s)\left(\frac{sM(\log N)^{2}}{N}
+\frac{C(\log N)^2}{N\sqrt{\eta_{s}\kappa_{s}}}+\left|\hat{m}_0(z_0) - m_0(z_0) \right|\right)e^{\int_s^{t\wedge\sigma} \beta(\tau)d\tau} \rd s.
\end{split}
\end{align}
For the function $\beta$ we have the following estimates
\begin{align}\begin{split}
\int_s^{t\wedge\sigma} \beta(\tau)\rd \tau
&\leq C(t-s)+\left(1+\frac{1}{\log N}\right)\int_s^{t\wedge\sigma} \frac{\Im[\hat m_s(z_s)]}{\Im[z_s]}\rd \tau\\
&\leq C(t-s)+ \left(1+\frac{1}{\log N}\right)\log \left(\frac{\text{Im}[z_{s}]}{\text{Im}[z_{t\wedge\sigma}]}\right),
\end{split}\end{align}
and thus
\begin{align}\label{e:beta}
e^{\int_s^{t\wedge\sigma} \beta(\tau)d \tau}
\leq e^{C(t-s)} e^{\left(1+\frac{1}{\log N}\right)\log \left(\frac{\text{Im}[z_{s}]}{\text{Im}[z_{t\wedge\sigma}]}\right)}
\leq C  \frac{\text{Im}[z_s]}{\text{Im}[z_{t\wedge\sigma}]},
\end{align}
where in the last equality, we used the estimate $\text{Im}[z_{s}]/\text{Im}[z_{t\wedge\sigma}] \leq C N$.
Combining the above inequality \eqref{e:beta} with \eqref{eqn:betabd} we can bound the last term in \eqref{e:midgronwall} by
\begin{align}\begin{split}\label{e:term2}
&\phantom{{}={}}C\int_0^{t\wedge\sigma}\frac{\text{Im}[ \hat m_{s}(z_s)]}{\text{Im}[z_{t\wedge \sigma}]}\left(\frac{sM}{N}
+\frac{C(\log N)^2}{N\sqrt{\eta_{s}\kappa_{s}}}+\left|\hat{m}_0(z_0) - m_0(z_0) \right|\right) \rd s\\
&\leq \frac{CM}{N\text{Im}[z_{t\wedge \sigma}]}\int_0^{t\wedge\sigma}s\text{Im}[\hat m_s(z_s)] ds+ \int_0^{t\wedge \sigma} \frac{\text{Im}[ \hat m_{s}(z_s)]}{\text{Im}[z_{t\wedge \sigma}]} \frac{C(\log N)^2}{N\sqrt{\eta_{s}\kappa_{s}}} \rd s\\
&+ C(t\wedge \sigma) \left|\hat{m}_0(z_0) - m_0(z_0) \right| \frac{\text{Im}[\hat m_{t\wedge \sigma}(z_{t \wedge \sigma})]}{\text{Im}[z_{t\wedge \sigma}]}.
\end{split}\end{align}
Since $|V' (z)| \leq C$, it follows that $\text{Im}[\hat m_s(z_s)]=-\partial_s \text{Im}[z_s] +O(1)$. Therefore we can bound  the first term in the righthand side of \eqref{e:term2} as
\begin{align}\begin{split}\label{e:term3}
\int_0^{t\wedge\sigma}s\text{Im}[\hat m_s(z_s)]ds = &\int_0^{t\wedge\sigma}(-\partial_s \text{Im}[z_s] )s \rd s + O ( (t \wedge \sigma )^2 ) = O ( t \wedge \sigma ) .
\end{split}\end{align}
We notice that
\begin{align}
\Im[\hat m_s(z_s)]\asymp \eta_s/(\kappa_s)^{1/2}\asymp\eta_{t\wedge \sigma}/(\kappa_{t\wedge \sigma})^{1/2}.
\end{align}
For the second term in the righthand side of \eqref{e:term2}, we have
\begin{align}\begin{split}
\int_{0}^{t \wedge \sigma} \frac{ \eta_{t\wedge \sigma}/(\kappa_{t\wedge\sigma})^{1/2}}{ \eta_{t\wedge \sigma} N \sqrt{\kappa_s \eta_s}} \rd s
&
= O\left(\int_{0}^{t\wedge \sigma} \frac{1}{N(\eta_{t\wedge \sigma})^{1/2} (\kappa_{t\wedge \sigma})^{1/2}(\kappa_{s})^{1/2}  }\rd s\right) \\
&= O\left(\int_{0}^{t\wedge \sigma} \frac{1}{N(\eta_{t\wedge \sigma})^{1/2} (\kappa_{t\wedge \sigma}) (\sqrt{\kappa_{t\wedge\sigma}}+(t\wedge\sigma-s))  }\rd s\right) \\
&= O\left(\frac{\log N}{N \sqrt{\eta_{t\wedge \sigma} \kappa_{t\wedge \sigma}}}\right).
\end{split}\end{align}
For the last term in the righthand side of \eqref{e:term2}, we have
\begin{align}\begin{split}
(t\wedge \sigma) \left|\hat{m}_0(z_0) - m_0(z_0) \right| \frac{\text{Im}[\hat m_{t\wedge \sigma}(z_{t \wedge \sigma})]}{\text{Im}[z_{t\wedge \sigma}]}
&= O\left(  \frac{(t \wedge \sigma)}{M N \eta_0\sqrt{\kappa_{t\wedge \sigma}}}\right) \\
&= O\left(  \frac{(t \wedge \sigma)}{M N (\eta_{t\wedge \sigma}+(t\wedge \sigma)\eta_{t\wedge \sigma}/\sqrt{\kappa_{t\wedge\sigma}})\sqrt{\kappa_{t\wedge \sigma}}}\right) \\
&= O\left(\frac{1}{M N \eta_{t\wedge \sigma}}\right).
\end{split}\end{align}


Combining all the above estimates, we have that on the event $\Omega$, for any $i\leq TN$ and $0\leq t\leq t_i$,
\begin{equation}\label{e:outside}
 \left|\hat{m}_{t\wedge \sigma}(z_{t\wedge \sigma}(u_i)) - m_{t\wedge \sigma}(z_{t\wedge \sigma}(u_i)) \right| = O\left(\frac{(\log N)^3}{N \sqrt{\kappa_{t\wedge \sigma} \eta_{t\wedge \sigma}}}+\frac{1}{M N \eta_{t\wedge \sigma}}\right)  \ll \frac{1}{M^{1/4}N \eta_{t\wedge \sigma}},
\end{equation}
where $z_{t\wedge \sigma}(u_i)=E_{t\wedge \sigma}+\kappa_{t\wedge\sigma}+\ri \eta_{t\wedge \sigma}$, and we used \eqref{e:realim}.

In the following we show that on the event $\Omega$, $\sigma=T$, otherwise if there exists a sample in $\Omega$ such that $\sigma<T$. Thanks to \eqref{e:outside}, we must have $\la_\sigma=E_\sigma+f(\sigma)$. We prove this is impossible by contradiction. If $\sigma<T$, then there exists some $i\leq \lceil TN \rceil $, $t_{i-1}< \sigma\leq t_i$. 
We recall that by \eqref{def:fr} and Proposition \ref{prop:TimStab},  $z_\sigma=z_{t_i}+\OO(1/N)$, { $E_\sigma=E_{t_i}+\OO(1/N)$} and $f(\sigma)=f(t_i)+\OO(1/N)$. Therefore, we have $z_\sigma(u_i)=z_{t_i}(u_i)+\OO(1/N)=E_{t_i}+f(t_i)+ \ri M^{-1/3}f(t_i)^{1/4}N^{-1/2}=E_\sigma+f(\sigma)+\ri M^{-1/3}f(t_i)^{1/4}N^{-1/2}+\OO(1/N)$. It follows from the argument as given at the beginning of proof, i.e by taking $E_{\sigma}+\kappa+\eta=z_\sigma$, we get that there is no eigenvalue in a neighborhood of $E_{\sigma}+f(\sigma)$ at time $\sigma$. This leads to a contradiction! This finishes the proof of \eqref{Thm:EdgeRidgity}.

%

\end{proof}

As a consequence of Theorem \ref{Thm:EdgeRidgity} and Proposition \ref{p:rigidity}, by the same argument as \cite[Corollary 3.2]{HL}, we have the following corollary on the locations of extreme eigenvalues.
\begin{corollary} \label{Col:GrenEdg}
Under the assumptions of  \ref{Thm:EdgeRidgity}, we have the following:
 there exists a constant $\fe>0$ such that with high probability under the Dyson Brownian motion \eqref{DBM}, for $ \sqrt{\eta^*}/\fc\leq t\leq T$, and uniformly for indices $1\leq i\leq \fe N$, we have 
\begin{align}
    |\la_i(t)-\gamma_i(t)|\leq \frac{M^2}{N^{2/3}i^{1/3}},
\end{align}
where $\gamma_i(t)$ are the classical particle locations of the density $\hat \rho_t$, i.e.
\begin{align}
    \frac{i-1}{N}=\int_{\gamma_i(t)}^{E_t}\rd \hat \rho_t(x).
\end{align}
\end{corollary}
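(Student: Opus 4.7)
I would derive the corollary by converting the Stieltjes-transform estimate of Proposition~\ref{p:rigidity} and the upper-edge bound of Theorem~\ref{Thm:EdgeRidgity} into rigidity for individual extreme particles, via a standard Helffer--Sjöstrand argument applied to smoothed counting functions. The plan is the one sketched in \cite[Corollary~3.2]{HL}, adapted here to the edge.

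First I would fix $t$ in the range $\sqrt{\eta^*}/\fc \leq t \leq T$ and an index $1\leq i\leq \fe N$, and introduce the natural edge scales
\[
    \kappa_i(t) \deq E_t-\gamma_i(t), \qquad \eta_i \deq \frac{M^2}{N^{2/3}i^{1/3}}.
\]
Since $\hat\rho_t$ has square-root behavior at $E_t$ (Proposition~\ref{prop:TimStab}), inverting the relation $(i-1)/N=\int_{\gamma_i(t)}^{E_t}\rd \hat\rho_t$ gives $\kappa_i(t)\asymp (i/N)^{2/3}$, so that $\eta_i = M^2/(N\sqrt{\kappa_i(t)+\eta_i})$ matches the local particle spacing (times $M^2$). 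One checks directly using $\Im[\hat m_t(z)]\asymp\sqrt{\kappa+\eta}$ near $E_t$ and the minimal value $f(t)\geq M^{4/3}N^{-2/3}$ that the strip $\{z=\gamma_i(t)+x+\ri\eta:\ |x|\leq \eta_i,\ \eta_i\leq \eta \leq 1\}$ is contained in $\cD_t^{\rm in}$, so the local-law bound $|m_t(z)-\hat m_t(z)|\leq M/(N\eta)$ of Proposition~\ref{p:rigidity} is available there.

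Next I would control the shifted counting functions
\[
    \cN_t(E)\deq\#\{j:\lambda_j(t)\leq E\}, \qquad \hat{\cN}_t(E)\deq N\int_{-\infty}^{E}\rd\hat\rho_t.
\]
Choosing a smooth cutoff $\chi$ that equals $\1_{(-\infty,E]}$ smoothed on scale $\eta_i$ and a dyadic $y$-cutoff $\phi(y)$ supported on $|y|\leq 1$ with $\phi\equiv 1$ for $|y|\leq 1/2$, the Helffer--Sjöstrand formula gives
\[
    \cN_t(E)-\hat\cN_t(E) = \frac{N}{\pi}\int_{\bR}\int_{\bR}\partial_{\bar z}\bigl(\chi(x)\phi(y)+\ri y\chi'(x)\phi(y)\bigr)\,(m_t(z)-\hat m_t(z))\,\rd x\,\rd y.
\]
Splitting the integration over the regimes $\eta_i\leq|y|\leq 1$ (where Proposition~\ref{p:rigidity} on $\cD_t^{\rm in}$ yields the gain $M/(N|y|)$) and $|y|\leq \eta_i$ (where one uses the trivial bound together with integration by parts and the monotonicity of $y\,\Im m_t(x+\ri y)$), a bookkeeping computation of exactly the same flavor as in \cite[Section~3]{HL} yields, uniformly for $E$ near $\gamma_i(t)$,
\[
    \bigl|\cN_t(E)-\hat\cN_t(E)\bigr|\leq M^2,
\]
with high probability. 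The contribution from the far region $\cD_t^{\rm far}$ is incorporated through the bound $|m_t-\hat m_t|\leq M/N$ there, and the absence of particles above $E_t+f(t)$, guaranteed by Theorem~\ref{Thm:EdgeRidgity}, ensures no contribution from the right edge.

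Finally, I would convert the counting bound to the position bound: since the density of $\hat\rho_t$ at $\gamma_i(t)$ is of order $\sqrt{\kappa_i(t)}\asymp (i/N)^{1/3}$, a shift of the counting function by $M^2$ corresponds to a position shift by $M^2/(N\sqrt{\kappa_i(t)})=M^2/(N^{2/3}i^{1/3})$, which is precisely the claimed rigidity estimate. The main obstacle is the careful verification that the contour used in the Helffer--Sjöstrand representation remains inside $\cD_t^{\rm in}\cup\cD_t^{\rm far}$ for every admissible $i$, which is why the range $i\leq \fe N$ appears: for a sufficiently small $\fe$ the classical locations $\gamma_i(t)$ stay in the neighborhood $\bB_{E_t}(r-t/\fc)$ where the spectral domain is defined. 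Once the contour is admissible, all remaining manipulations are essentially identical to the bulk-edge argument in \cite{HL}.
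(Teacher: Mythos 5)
Your proposal is correct and takes essentially the same approach the paper has in mind: the paper does not write out a proof but instead refers to ``the same argument as \cite[Corollary 3.2]{HL},'' which is precisely a Helffer--Sj\"ostrand conversion of the Stieltjes-transform estimate from Proposition~\ref{p:rigidity} into a counting-function bound, combined with the upper-edge bound of Theorem~\ref{Thm:EdgeRidgity} to handle the extreme indices and then the square-root density to translate the counting bound into the $M^2/(N^{2/3}i^{1/3})$ position bound. One small bookkeeping caveat: since $f(t)=M^2N^{-2/3}$ for $t\geq\sqrt{\eta^*}/\fc$, the admissibility threshold $\Im[z]\Im[\hat m_t(z)]\geq f(t)^{3/2}=M^3/N$ forces you to take the lower cutoff in your strip at $\eta\gtrsim M^3/(N\sqrt{\kappa_i(t)})$ rather than at $\eta_i=M^2/(N\sqrt{\kappa_i(t)})$ as written; the sub-$\eta_i$ region then has to be absorbed via the trivial bound, producing an extra power of $M$ in the intermediate estimates that has to be traded off against the slack built into $M=(\log N)^{12}$. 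This does not change the structure of the argument, only the constants and exponents in the dyadic decomposition.
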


\section{Mesoscopic Central Limit Theorem}
We will try to prove here a version of the CLT for mesoscopic linear statistics for empirical particle density $\mu_t$ with $\mu_0$ satisfying the assumptions of  \ref{a:initial}; if not explicitly stated, $m_0$ will be assumed to satisfy said hypothesis. 

{
\begin{definition}
We fix a control parameter $\epsilon$.
We define $\cH_t$ to be the region $\{z: - (\Im[z])^{4/5 +  \epsilon} \leq \Re[z]-E_t \leq N^{-\epsilon}, N^{-2/3+\epsilon} \leq \Im[z]\leq N^{-\epsilon}, |z -E_t|\le N^{-\epsilon} \}$. From this point onwards, we will use the notation that if $z_t(u)$ is a characteristic then $\kappa_t(u)= \Re[z_t(u)] - E_t$ and $\eta_t(u)= \Im[z_t(u)]$. We will not always give reference to the parameter u when the context is obvious.
\end{definition}
\begin{remark}
From the equation determining the movement of characteristics, we know that each characteristic moves at $\OO(1)$ in time. Thus, if we choose a time $s<t$ such that $|s-t| \ll (\log N)^{-2}$, then we are assured that if $z_t(u) \in \cH_t$ necessarily we have $|z_s(u) - E_s| \ll (\log N)^{-2}$. In addition, one can check that if $z_t(u)$ is in $\cH_t$ for $t \le N^{-\epsilon}$, then we must necessarily have that $z_s(u)$ is in $\cH_s$ for $s<t$ as the edge moves faster to the right than any characteristic.

\end{remark}
}

Our goal is to prove the following theorem
\begin{theorem} \label{t:mesoCLT}
Let $m(z)$ satisfy \ref{a:initial}. First fix a scale $\eta$ satisfying $N^{-2/3 + \epsilon} \ll \eta^* \ll  \eta \ll N^{-\epsilon} $. Consider complex numbers $w_1, w_2, \cdots,w_n$ and a time $t$ satisfying $\sqrt{ \eta} N^{\epsilon} \leq t \leq (\log N)^{-4} $. Then the rescaled quantities $\Gamma_{t}[E_t +w_i \eta] = N \eta \left[m_{t}(E_{t} + w_i  \eta) - \hat{m}_{t}(E_t + w_i \eta)\right] - \frac{2- \beta}{4 \beta w_i}$ asymptotically form a Gaussian Field with limiting Covariance Kernel
\begin{align*}
    K_{\rm edge}(w_i,w_j):=\lim_{N\rightarrow \infty}N^2 \cov\langle \Gamma_t(E_t+w_i\eta), \Gamma_t(E_t+w_j\eta) \rangle=\frac{1}{2\beta \sqrt{w_i}\sqrt{w_j}(\sqrt{w_i}+\sqrt{w_j})^2},
\end{align*}

\end{theorem}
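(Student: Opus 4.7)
The plan is to study $D_s := m_s(z_s(u)) - \hat m_s(z_s(u))$ along characteristics $z_s(u_j)$ initialized so that $z_t(u_j) = E_t + w_j\eta$. Repeating the derivation in the proof of Proposition~\ref{p:rigidity}, integration of the It\^o equation from $0$ to $t$ yields a decomposition of $D_t$ into the initial value $D_0$, a Gronwall-type self-interaction $\int_0^t D_s\,\partial_z(\hat m_s + V'/2)(z_s)\,ds$, a deterministic contour-integral remainder bounded via Proposition~\ref{prop:HFbound}, a deterministic bias
\[
\cE^{\rm det}_t = \tfrac{2-\beta}{\beta N^2}\int_0^t\sum_i(\lambda_i(s)-z_s(u))^{-3}\,ds
\]
that vanishes for $\beta=2$, and the martingale
\[
\cM_t(u) = -\int_0^t\sqrt{\tfrac{2}{\beta N^3}}\sum_i\frac{dB_i(s)}{(\lambda_i(s)-z_s(u))^2}.
\]
Introducing $\tilde a_s := \sqrt{z_s(u_j)-E_s}$ and combining the characteristic equation \eqref{def:zt}, the edge equation \eqref{e:edgeEqn}, and the square-root expansion $\hat m_s(z) = A_s(z)+c_s\sqrt{z-E_s}$ guaranteed by Proposition~\ref{prop:TimStab}, one derives $\partial_s\tilde a_s = -c_s/2 + O(\tilde a_s)$, so $\tilde a_s \approx \tilde a_t + c(t-s)/2$ to leading order. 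In the regime $t\gg\sqrt{\eta}$, the characteristic therefore moves a distance of order $t$ backward in time, giving $\Im z_0(u_j)\gtrsim t\sqrt{\eta}$; by Assumption~\ref{a:initial}, the initial-data contribution $D_0$ is $O(M/(Nt\sqrt{\eta}))$, which is negligible after the eventual multiplication by $N\eta$. The Gronwall self-interaction is absorbed into an integrating factor of size $1+\oo(1)$ exactly as in the argument following \eqref{e:midgronwall}, and the contour-integral piece is $O(M/N)$.

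The deterministic bias $\cE^{\rm det}_t$ supplies the explicit centering. Using Corollary~\ref{Col:GrenEdg} to replace the empirical sum by $\tfrac{N}{2}\partial_z^2\hat m_s(z_s)$ and substituting $\partial_z^2\hat m_s(z_s) = -c_s/(4\tilde a_s^3)+O(1)$, the identity $\partial_s(\tilde a_s^{-2}) = c_s/\tilde a_s^3$ evaluates the time integral in closed form; in the regime $t\gg\sqrt{\eta}$ the leading term of $N\eta\cdot\cE^{\rm det}_t$ is the explicit constant $\tfrac{2-\beta}{4\beta w_j}$, matching the quantity subtracted off in the definition of $\Gamma_t$ so that $\Gamma_t$ is asymptotically centered.

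The fluctuations are carried by $\cM_t$. By It\^o isometry,
\[
\bE\bigl[\cM_t(u_j)\overline{\cM_t(u_\ell)}\bigr] = \tfrac{2}{\beta N^3}\int_0^t\sum_i\frac{ds}{(\lambda_i(s)-z_s(u_j))^2(\lambda_i(s)-\overline{z_s(u_\ell)})^2},
\]
and analogously for $\bE[\cM_t(u_j)\cM_t(u_\ell)]$. Applying Proposition~\ref{p:rigidity} to replace the empirical sum by an integral against $\hat\rho_s$, differentiating the identity $\int\hat\rho_s(dx)/[(x-z)(x-z')] = (\hat m_s(z)-\hat m_s(z'))/(z-z')$ in $z$ and $z'$, and substituting the square-root expansion, we obtain
\[
\int\frac{\hat\rho_s(dx)}{(x-z)^2(x-z')^2} \sim \frac{c_s}{2\tilde a_s\tilde a_s'(\tilde a_s+\tilde a_s')^3}
\]
with $\tilde a_s'=\sqrt{\overline{z_s(u_\ell)}-E_s}$. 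Since $\tilde a_s,\tilde a_s'$ are affine in $c(t-s)/2$, the time integral can be computed explicitly and, in the regime $t\gg\sqrt{\eta}$, converges as the upper limit $ct/2\to\infty$ to an explicit rational function of $\tilde a_t=\sqrt{w_j\eta}$ and $\tilde a_t'$. After multiplying by $N^2\eta^2$, the $\eta$-dependence cancels and one recovers the claimed kernel $K_{\rm edge}(w_j,w_\ell)$ (the unconjugated covariance $\bE[\cM_t(u_j)\cM_t(u_\ell)]$ gives the same kernel with $\bar w_\ell$ replaced by $w_\ell$, in the appropriate branch of the square root). Joint asymptotic Gaussianity of $(\Gamma_t(E_t+w_i\eta))_i$ then follows from the martingale central limit theorem applied to the real and imaginary parts of the vector martingale $(\cM_t(u_i))_i$: the quadratic-variation matrix converges in probability to the deterministic limit computed above, and the individual jumps are uniformly $O(N^{-2})$.

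The main obstacle will be the precise evaluation of this time integral and the verification that all subleading corrections — from the Gronwall self-interaction, the contour integral of Proposition~\ref{prop:HFbound}, and the analytic (non-square-root) parts of $\hat m_s$ that are suppressed in the edge expansion — are genuinely $o(1)$ after the multiplication by $N^2\eta^2$. A secondary subtlety, essential for characterizing the full Gaussian field rather than just its variances, is the consistent choice of the square-root branch across characteristics with complex initial values $w_j$, since this governs the holomorphic–holomorphic covariance $\bE[\cM_t(u_j)\cM_t(u_\ell)]$ required to pin down the limiting law.
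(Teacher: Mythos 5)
Your overall plan correctly mirrors the skeleton of the paper's argument (rewrite the Itô equation along characteristics, separate deterministic bias, contour‐integral remainder, and martingale, and compute the kernel via the square‐root expansion), but it contains a decisive error: you treat the Gronwall integrating factor as $1+\oo(1)$, and consequently your martingale
\[
\cM_t(u) = -\int_0^t\sqrt{\tfrac{2}{\beta N^3}}\sum_i\frac{\rd B_i(s)}{(\lambda_i(s)-z_s(u))^2}
\]
and your bias integral both omit the weight $\mathcal I_t\mathcal I_q^{-1}=\exp\bigl(\int_q^t\partial_z\hat m_r(z_r)\,\rd r\bigr)$. At the edge this factor is \emph{not} close to one: by Lemma~\ref{lem:IntFac} it equals $\sqrt{\kappa_q+\ri\eta_q}/\sqrt{\kappa_t+\ri\eta_t}(1+\oo(1))$, and since $\tilde a_q:=\sqrt{\kappa_q+\ri\eta_q}\approx \tilde a_t + \tfrac{\pi C}{2}(t-q)$, the ratio grows like $(t-q)/\sqrt{\eta}$ as $q$ moves backward in time. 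The paper therefore solves the SDE with this factor explicitly inside the time integrals (Eq.~\eqref{e:StochasDiff}), and chooses an intermediate starting time $s$ with $\sqrt{|\kappa_t+\ri\eta_t|}^{\,1-\epsilon}\ll t-s\ll |\kappa_t+\ri\eta_t|^{1/4+\epsilon}$ precisely so that Lemma~\ref{lem:IntFac} applies and the initial term $\mathcal I_t\mathcal I_s^{-1}(m_s(z_s)-\hat m_s(z_s))$ is negligible.

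The omission changes both constants quantitatively. For the bias, your computation gives
\[
N\eta\cdot\cE_t^{\rm det}\approx \frac{2-\beta}{2\beta N}\cdot N\eta\int_0^t\partial_z^2\hat m_q(z_q)\,\rd q
= -\frac{2-\beta}{2\beta N}\cdot N\eta\cdot\frac{1}{4\tilde a_t^2}
= -\frac{2-\beta}{8\beta w_j},
\]
whereas the weighted integral $\mathcal I_t\int_s^t\mathcal I_q^{-1}\partial_z^2\hat m_q(z_q)\,\rd q \approx \tfrac{1}{\tilde a_t}\int_s^t\tilde a_q\,\partial_z^2\hat m_q(z_q)\,\rd q$ yields twice this magnitude (the paper's $\tfrac{2-\beta}{4\beta w_j}$). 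For the covariance, $\bE[\cM_t(u_j)\overline{\cM_t(u_\ell)}]$ leads, after the square‐root substitution, to
\[
\int_0^t\frac{\pi C_q/2}{\tilde a_q\tilde a_q'(\tilde a_q+\tilde a_q')^3}\,\rd q,
\]
which does \emph{not} evaluate (even in the $t\gg\sqrt\eta$ limit) to $\tfrac{1}{4\tilde a_t\tilde a_t'(\tilde a_t+\tilde a_t')^2}$: for example at $\tilde a_t=\tilde a_t'=1$ the unweighted integral gives $\int_0^\infty\frac{\rd u}{(1+u)^2(2+2u)^3}=\tfrac1{32}$, whereas the weighted integral (the one that actually produces $K_{\rm edge}$) gives $\tfrac1{16}$. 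Your claim that the initial‐data term is negligible is also flawed for the same reason: since $\mathcal I_t\mathcal I_0^{-1}\sim t/\sqrt\eta$ while $|D_0|\lesssim M/(N t\sqrt\eta)$, the product is of order $M/(N\eta)$, which is $\OO(M)$ after multiplying by $N\eta$ rather than $\oo(1)$; this is exactly why the paper starts the explicit solution at the intermediate time $s$ rather than at $0$.

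In short, the integrating factor must be kept inside every time integral; it is responsible for the specific form of the kernel $K_{\rm edge}$ and the constant $\tfrac{2-\beta}{4\beta w_j}$, and the regime $(t-s)^2\ll|\kappa_t+\ri\eta_t|^{1/2+\epsilon}$ is needed to control it. Until this is incorporated, the argument does not reproduce the theorem.
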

\begin{remark}
We will remark here that this result only gives the leading order when $\kappa_t = \OO(\eta_t)$. Regardless, the variance bound in the larger region is important later.
\end{remark}
\begin{proof}[Proof of Theorem \ref{t:mesoCLT}]
Let the event $\Omega$ be as in Theorem \ref{p:rigidity}.  Thanks to the estimates \eqref{e:diffmm} and Lemma \ref{dpbound} which hold on $\Omega$, we can bound the second term on the RHS of \eqref{e:diffm}  by first splitting it into an error part and main term part. The following is an estimate of the error term.
\begin{align}\begin{split}
\left|\left(m_t(z_t)- \hat{m}_t(z_t)\right)\del_z \left( (m_t(z_t) - \hat{m}_t(z_t))+\frac{V'(z_t)}{2}\right)\right|
\leq& \frac{M}{N\Im[z_t]}\left(\frac{M}{N\Im[z_t]^{2}}+ 1\right)\\
=& \OO\left(\frac{M^2}{(N^2\Im[z_t]^{3})}+\frac{M}{N \Im[z_t]}\right),
\end{split}\end{align}
The main term coming from this contribution is
\begin{equation}
    \left(m_t(z_t)- \hat{m}_t(z_t)\right)\del_z \left( \hat{m}_t(z_t) \right)
\end{equation}
%
%
The contour integral on the righthand side of \eqref{e:diffm} is an error term. Using  Proposition \ref{prop:HFbound}, we have on the event $\Omega$
\beq
 \left|\oint_{\mathcal{C}_t} g(z_t,w) (m_t(w)- \hat{m}_t(w)) \rd w \right|\leq \frac{CM}{N}.
\eeq
We can rewrite the final term on the righthand side of \eqref{e:diffm} as
\beq
\frac{2-\beta}{\beta N^2}\sum_{i=1}^{N}\frac{1}{(\la_i(t)-z_t)^3}
=\frac{2-\beta}{2\beta N} \del_z^2 (m_t(z_t) - \hat{m}_t(z_t)) +\frac{2-\beta}{2\beta N}  \partial_z^2 \hat{m}_t(z_t)  
\eeq
Thanks to Lemma \ref{dpbound}, we have
\begin{align}\begin{split}
\left|\del_z^2 (m_t(z_t) - \hat{m}_t(z_t))\right|
=\OO\left(\frac{1}{N (\Im[z_t])^{3}}\right).
\end{split}
\end{align}
%
We have the following differential equation in order to study the fluctuations of $m_t(z_t) - \hat{m}_t(z_t)$
\begin{align}\begin{split}
    \partial_t(m_t(z_t) - \hat{m}_t(z_t)) 
    &= (m_t(z_t) - \hat{m}_t(z_t)) \partial_z \hat{m}_t(z_t) + \frac{2-\beta}{2 \beta N} \partial_z^2 \hat{m}_t(z_t) \\
    &- \sqrt{\frac{2}{\beta N^3}} \sum_{i=1}^N \frac{ {\rm d} B_i(t)}{(\la_i(s) - z_s)^2} + \OO\left(\frac{1}{N^2 \Im[z_t]^3}\right)
\end{split}\end{align}
We can explicitly solve the above equation by using $\mathcal{I}_t:=\exp{\int_{0}^t \partial_z \hat{m}_s(z_s) \rd s}$ as an integrating factor.
The solution can be explicitly written up as
\begin{align} \label{e:StochasDiff}
    \begin{split}
        m_t(z_t) - \hat{m}_t(z_t) &=\mathcal{I}_t (\mathcal{I}_s)^{-1} (m_s(z_s) - \hat{m}(z_s))  + \mathcal{I}_t \int_s^{t}\mathcal{I}_q^{-1} \left( \frac{2-\beta}{2\beta N} \partial_z^2 \hat{m}_q(z_q) +\right. \\&
        \left.- \sqrt{\frac{2}{\beta N^3}} \sum_{i=1}^{N} \frac{{\rm d}B_i(q)}{(\la_i(q)-z_q)^2} + \OO\left( \frac{1}{N^2 \Im[z_q]^3}\right) \right) \rd q
    \end{split}
\end{align}
The deterministic integral in the above line is an offset term for the mean value. The stochastic integral is the cause of the gaussian fluctuation.
From Lemma \ref{lem:IntFac}, which will be shown later, we can proceed further and evaluate the quantities that appear in the integral of \ref{e:StochasDiff}.
Choosing $s$ so that $(\sqrt{|\kappa_t + i \eta_t|})^{1- \epsilon} \ll t-s \ll (|\kappa_t + i \eta_t|)^{1/4 + \epsilon}$, we can evaluate
\begin{align}
    \begin{split}
        \frac{2-\beta}{2\beta N} \frac{1}{\sqrt{\kappa_t + i \eta_t}} \int_s^{t} \partial_z^2 \hat m_q(z_q) \sqrt{\kappa_q + i \eta_q} \rd q & =  \frac{2-\beta}{2 \beta N} \frac{1}{\sqrt{\kappa_t + i \eta_t}} [\int_s^{t} \frac{C_q \pi}{4(\kappa_q + i \eta_q)} \rd q+ \OO((s-t)(\log N)^5 )]\\
        &= \frac{2-\beta}{4 \beta N} \frac{1}{(\kappa_t + i \eta_t)}[1+ \OO(N^{-\epsilon})]
    \end{split}
\end{align} 
The bound we have on $[m_s(z_s) - \hat{m}_s(z_s)] \mathcal{I}_t(\mathcal{I}_s)^{-1}$ is $\OO \left(\frac{1}{N\sqrt{\kappa_s+ i \eta_s} \sqrt{\kappa_t + i \eta_t}} \right)$ where we applied rigidity at time $t$. This is clearly of much smaller order than $\frac{1}{N (\kappa_t + i \eta_t)}$
%
%
%
By combining the above estimates we see that on the event $\Omega$, we have
\beq\label{e:tmmtdiff}
m_t(z_t)- \hat{m}_t(z_t) = \frac{2 - \beta}{4 \beta N} \frac{1}{\kappa_t + i \eta_t} [1+ O(N^{-\epsilon})] +\sqrt{\frac{2}{\beta N^3}}\int_s^t \mathcal{I}_t (\mathcal{I}_s)^
{-1} \sum_{i=1}^N \frac{{\rm d} B_i(q)}{(\la_i(q)-z_q)^2} + \OO(\frac{1}{N^2 \Im[z_t]^{5/2}}).
\eeq
We remark at this point that since we have that $\Im[z_t] \gg N^{-2/3}$, the final term in $\eqref{e:tmmtdiff}$ will be less than the previous two terms and can essentially be absorbed into the $\OO(N^{-\epsilon})$ factor appearing above.

In the following we show that the Brownian integrals are asymptotically jointly Gaussian. We fix $z_1,z_2\cdots, z_k  \in H_t$ such that there exists a constant $B>1$ such that for all $ i,j$, $B^{-1}\leq \Im[z_j](\Im[z_i])^{-1}\leq B $ and let $u_1,u_2,\cdots, u_k$ be points such that $z_t(u_i) = z_i$ for $i=1,2,\cdots,k$ respectively.
For $1\leq j\leq k$, let
\beq
X_j(t)= \Im[z_t(u_j)]\sqrt{\frac{2}{\beta N}}\int_s^t\sum_{i=1}^N  \mathcal{I}_t (\mathcal{I}_s)^{-1}\frac{{\rm d} B_i(t)}{(\la_i(s)-z_s(u_j))^2},\quad j=1,2,\cdots, k.
\eeq
where $s$ is a time such that $(\log N)^{-2} \max(\sqrt{|\kappa_t(u_1) + i \eta_t(u_i)|})^{1/2+ \epsilon}\gg (t-s) \gg (\sqrt{\max_{i=1,2,\cdots,n}(\eta_t(u_i)})^{1-\epsilon}$. Such a time exists based on how we chose our points $z_1, z_2,\cdots, z_n$.

We compute their joint characteristic function,
\beq\label{e:cfunc}
\bE\left[\exp\left\{\ri\sum_{j=1}^k a_j\Re[X_j(t)]+b_j\Im[X_j(t)]\right\}\right]
\eeq
Since $\sum_{j=1}^k a_j\Re[X_j(t)]+b_j\Im[X_j(t)]$ is  a martingale, the following is also a martingale
\beq
\exp\left\{\ri \sum_{j=1}^k \{a_j\Re[X_j(t)]+b_j\Im[X_j(t)]\}+\frac{1}{2}\left\langle \sum_{j=1}^k a_j\Re[X_j(t)]+b_j\Im[X_j(t)]\right\rangle\right\}
\eeq
In particular, its expectation is one. By computations performed later in Proposition \ref{p:var} and Lemma \ref{Lem:VarEval}, on the event $\Omega$ , the quadratic variation is given by
\begin{align}\begin{split}
&\frac{1}{2}\left\langle \sum_{j=1}^k a_j\Re[X_j(t)]+b_j\Im[X_j(t)]\right\rangle [1+\OO(N^{-\epsilon})]\\
=& -\sum_{1\leq j,\ell\leq k}\Re\left[\frac{(a_j-\ri b_j)(a_\ell+\ri b_\ell)\Im[z_t(u_j)]\Im[z_t(u_\ell)]}{16\beta\sqrt{\kappa_t(u_j) - i \eta_t(u_j)}\sqrt{\kappa_t(u_\ell)+i \eta_t(u_\ell)} (\sqrt{\kappa_t(u_j) - i \eta_t(u_j)}+ \sqrt{\kappa_t(u_\ell) + i \eta_t(u_\ell)})^2}\right]\\
&-\sum_{1\leq j,\ell\leq k}\Re\left[\frac{(a_j+\ri b_j)(a_\ell+\ri b_\ell)\Im[z_t(u_j)]\Im[z_t(u_\ell)]}{16\beta \sqrt{\kappa_t(u_j) + i \eta_t(u_j)}\sqrt{\kappa_t(u_\ell)+i \eta_t(u_\ell)}(\sqrt{\kappa_t(u_j) + i \eta_t(u_j)}+ \sqrt{\kappa_t(u_\ell) + i \eta_t(u_\ell)})^2}\right]\\
&-\sum_{1\leq j,\ell\leq k}\Re\left[\frac{(a_j-\ri b_j)(a_\ell-\ri b_\ell)\Im[z_t(u_j)]\Im[z_t(u_\ell)]}{16\beta\sqrt{\kappa_t(u_j) - i \eta_t(u_j)}\sqrt{\kappa_t(u_\ell)-i \eta_t(u_\ell)}(\sqrt{\kappa_t(u_j) - i \eta_t(u_j)}+ \sqrt{\kappa_t(u_\ell) - i \eta_t(u_\ell)})^2}\right] 
\end{split}\end{align}
The value of \eqref{e:cfunc} is clearly the exponential of the upper quantity.
Since by \eqref{e:tmmtdiff},
\begin{align*}
\Gamma_t(z_t(u_j))=X_j(t)+\OO(N^{-\epsilon}),
\end{align*}
and so we have a gaussian field. Choosing $z_t(u_j) = E_t + w_j \eta$, we get Theorem \ref{t:mesoCLT}.
\end{proof}

From the above computation, we get the kernel, defined as 
\begin{align*} \label{e:CharFunc}
    K_{\rm edge}(w,w')=\frac{1}{2\beta \sqrt{w}\sqrt{w'}(\sqrt{w}+\sqrt{w'})^2}
\end{align*}
where $w,w'\in \bC\setminus \bR_{-}$. We recall the kernel in the bulk is given by
\begin{align}
    K_{\rm bulk}(w,w')=\frac{2}{\beta (w-w')^2},
\end{align}
provided $w\in \bC_+, w'\in \bC_-$, or $w\in \bC_-,w'\in \bC_+$, otherwise $K_{\rm bulk}(w,w')=0$. We can in fact recover the kernel in the bulk from the kernel in the edge. We take $w=\kappa+\ri \eta$, and $w=\kappa'+\ri \eta'$, and let $\kappa,\kappa'$ tend to $-\infty$,
\begin{align}
    K_{\rm edge}(w,w')
    \rightarrow
     \left\{
    \begin{array}{cc}
          0 & \text{if $\eta\eta'>0$,}\\
          K_{\rm bulk}(w,w') & \text{if $\eta\eta'<0$.}\\
    \end{array}
    \right.
\end{align}

\begin{corollary}\label{c:mesoCLT}
Under the assumptions of Theorem \ref{t:mesoCLT}, the following holds for any compactly supported test function $\psi$ in the Sobolev space $H^{s}$ with $s>1$. Let { $N^{-2/3+ \epsilon}\ll \eta^{*} \ll \eta\ll t \ll N^{-1/2 - \epsilon}$} and define
\begin{align}
\psi_{\eta}(x)=\psi\left(\frac{x-E_t}{\eta}\right).
\end{align}
The normalized linear statistics converges to a Gaussian
\beq\label{e:toGaussian}
\hat{\cal L}(\psi_{\eta})\deq \sum_{i=1}^N \psi_{\eta}(\lambda_i(t))-N\int_{\bR} \psi_{\eta,E}(x) \rd \rho_t(x)\rightarrow N(0, \sigma_\psi^2) - \frac{2-\beta}{ 4  \beta} \psi(0),
\eeq
in distribution as $N\rightarrow \infty$, where 
\beq
\sigma_\psi^2\deq \frac{1}{4\pi^2 \beta}\int_{\bR^2} \left(\frac{\psi(x^2)-\psi(y^2)}{x-y}\right)^2\rd x\rd y.
\eeq
\end{corollary}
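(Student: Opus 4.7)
The plan is to reduce the linear statistic to an integral of $m_t-\hat m_t$ via the Helffer--Sj\"ostrand formula, and then plug in the edge CLT for the Stieltjes transform provided by Theorem \ref{t:mesoCLT}. Let $\tilde\psi$ be a compactly supported, real-symmetric quasi-analytic extension of $\psi$ satisfying $|\bar\partial\tilde\psi(w)|\lesssim\|\psi\|_{H^s}|\Im w|^{s-1}$ near $\bR$, which exists because $s>1$. Applying Helffer--Sj\"ostrand to $\sum_i \psi_\eta(\la_i(t))-N\int\psi_\eta\,\rd\hat\rho_t$ and then rescaling $z=E_t+w\eta$ produces
\begin{align*}
\hat{\cal L}(\psi_\eta)=\frac{1}{\pi}\int_{\bC}\bar\partial\tilde\psi(w)\cdot N\eta\bigl[m_t(E_t+w\eta)-\hat m_t(E_t+w\eta)\bigr]\,\rd A(w).
\end{align*}
By Theorem \ref{t:mesoCLT}, the bracketed quantity equals $\frac{2-\beta}{4\beta w}+\Gamma_\infty(w)+\oo(1)$, where $\Gamma_\infty$ is a centered complex Gaussian field with covariance kernel $K_{\rm edge}$.

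Splitting the integrand accordingly, the deterministic contribution is $\frac{2-\beta}{4\beta}\cdot\frac{1}{\pi}\int \bar\partial\tilde\psi(w)/w\,\rd A(w)=-\frac{2-\beta}{4\beta}\psi(0)$, by the Cauchy--Pompeiu formula $\psi(0)=-\pi^{-1}\int\bar\partial\tilde\psi(w)/w\,\rd A(w)$. This accounts exactly for the mean shift claimed in the corollary. The stochastic contribution $\pi^{-1}\int\bar\partial\tilde\psi(w)\,\Gamma_\infty(w)\,\rd A(w)$ is a centered Gaussian with variance
\begin{align*}
\sigma_\psi^2=\frac{1}{\pi^2}\iint_{\bC^2}\bar\partial\tilde\psi(w)\,\bar\partial\tilde\psi(w')\,\mathcal K(w,w')\,\rd A(w)\,\rd A(w'),
\end{align*}
where $\mathcal K$ assembles both the analytic covariance $\bE[\Gamma_\infty(w)\Gamma_\infty(w')]$ and the conjugate covariance $\bE[\Gamma_\infty(w)\overline{\Gamma_\infty(w')}]$, both of which are explicit from the three-term quadratic variation expression computed inside the proof of Theorem \ref{t:mesoCLT}.

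To reconcile $\sigma_\psi^2$ with the stated $H^{1/2}$-type formula, the plan is to make the substitution $w=u^2$, picking the branch $\sqrt w=u$ so that $u$ stays in a fixed half-plane. Then $K_{\rm edge}(u^2,(u')^2)=\frac{1}{2\beta uu'(u+u')^2}$, and the chain rule $\bar\partial_u[\tilde\psi(u^2)]=2\bar u\,\bar\partial_w\tilde\psi(u^2)$ together with the Jacobian $\rd A(w)=4|u|^2\,\rd A(u)$ turns $\bar\partial\tilde\psi(w)\,\rd A(w)$ into $2u\,\bar\partial\Psi(u)\,\rd A(u)$, where $\Psi(u):=\tilde\psi(u^2)$ is the natural symmetric extension of the even function $\phi(x):=\psi(x^2)$. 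After summing the two branches $\pm\sqrt w$ and combining the analytic and conjugate covariance pieces using the reflection symmetry $\Gamma_\infty(\bar w)=\overline{\Gamma_\infty(w)}$, the denominator $(u+u')^2$ recombines into the bulk denominator $(u-u')^2$, and $\sigma_\psi^2$ becomes precisely the bulk $\beta$-ensemble CLT variance for $\phi$. Running the Helffer--Sj\"ostrand identity in reverse identifies this with
\begin{align*}
\sigma_\psi^2=\frac{1}{4\pi^2\beta}\int_{\bR^2}\left(\frac{\psi(x^2)-\psi(y^2)}{x-y}\right)^2\,\rd x\,\rd y.
\end{align*}

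The main obstacle is that Theorem \ref{t:mesoCLT} only identifies the limit Gaussian in the region $\cH_t$ where $|\Re w|\lesssim|\Im w|$, while the Helffer--Sj\"ostrand integration variable $w$ ranges over a full complex neighborhood of $\supp\psi$ that includes the regime $|\Re w|\gg|\Im w|$. Closing this gap is where the remark after Theorem \ref{t:mesoCLT} pays off: the a priori variance estimate established in its proof is valid in a considerably larger region than the limit identification, and combined with the decay $|\bar\partial\tilde\psi(w)|\lesssim|\Im w|^{s-1}$ with $s>1$, it shows that the contribution to $\sigma_\psi^2$ from $|\Re w-E_t|\gg|\Im w|$ is $\oo(1)$, which justifies the dominated-convergence step. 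Joint Gaussianity with other linear statistics follows directly from the characteristic-function computation already carried out in the proof of Theorem \ref{t:mesoCLT}.
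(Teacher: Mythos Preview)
Your approach via Helffer--Sj\"ostrand is a genuinely different route from the paper's. The paper never uses a two-dimensional complex integral; instead it represents $\psi_\eta$ as a superposition of Poisson kernels, so that the linear statistic becomes a one-dimensional real integral of $\Im[m_t-\hat m_t]$ at a fixed imaginary height, approximates that integral by a Riemann sum to which Theorem~\ref{t:mesoCLT} applies directly, and then reaches general $H^s$ functions by a Littlewood--Paley decomposition following \cite{MR3116567}. Your computation of the mean shift via Cauchy--Pompeiu and the $w=u^2$ substitution reducing $K_{\rm edge}$ to the bulk $H^{1/2}$ variance are both correct and conceptually cleaner than the paper's somewhat indirect variance identification.

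There is, however, a real gap in your argument. Theorem~\ref{t:mesoCLT} gives only finite-dimensional convergence in distribution of $(\Gamma_t(E_t+w_1\eta),\ldots,\Gamma_t(E_t+w_k\eta))$; you cannot simply write $N\eta[m_t-\hat m_t]=\frac{2-\beta}{4\beta w}+\Gamma_\infty(w)+\oo(1)$ inside the integral and invoke dominated convergence, because the $\oo(1)$ is not pointwise and $\Gamma_\infty$ is a limit object, not a random field defined at finite $N$. To pass from finite-dimensional to integrated convergence you must discretize and control the remainder in $L^2$, and this is precisely where the regularity threshold bites: the naive bound $\mathrm{Var}\bigl[N\eta(m_t-\hat m_t)(E_t+w\eta)\bigr]\lesssim |\Im w|^{-2}$ (which is what $K_{\rm edge}(w,\bar w)$ gives for $\Re w<0$, $|\Im w|\ll|\Re w|$) combined with $|\bar\partial\tilde\psi(w)|\lesssim|\Im w|^{s-1}$ yields an $L^2$ bound on the Helffer--Sj\"ostrand integral only when $2(s-1)-2>-1$, i.e.\ $s>3/2$. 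The corollary, however, claims $s>1$. Closing this gap for $1<s\leq 3/2$ is exactly what the paper's Littlewood--Paley argument accomplishes: by decomposing $\psi=\sum_k P_{2^{-k}}\ast g_k$ and using scale-by-scale $L^2$ bounds together with the $H^s$ summability $\sum_k 2^{2ks}\|g_k\|_{L^2}^2\lesssim\|\psi\|_{H^s}^2$, one trades the lossy pointwise integration near $\Im w=0$ for a dyadic sum that is finite for every $s>1$. Your sketch does not supply an equivalent mechanism, so as written it proves the corollary only in the restricted range $s>3/2$.
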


\subsection{Preliminary Estimates}

We will start by proving some of the simple estimates that have appeared in the derivation of the covariance Kernel. These quantities will also reoccur frequently during later computations.

\begin{lemma} \label{IntegralBounds}
We have for those points such that $z_t(u) \in H_t$
\begin{equation}
\int_{0}^{t} \frac{1}{(\Im[z_s(u)])^{p}} ds=  \OO\left( \frac{M}{\eta_t^{p-1/2}}\right)
\end{equation}
where the constant appears above is independent of $N$ and $u$ in $H_t$
\end{lemma}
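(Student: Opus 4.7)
The plan is to reduce the integral to an elementary one using the characteristic-flow estimates and then read off the bound from the square-root behaviour of $\hat\rho_t$ at the edge. Concretely, the third inequality in Proposition \ref{prop:ImEst} gives, for $0\leq s\leq t\leq T$,
\[
\Im[z_s(u)] \;\gtrsim\; \eta_t + (t-s)\,\alpha,\qquad \eta_t:=\Im[z_t(u)],\ \ \alpha:=\Im[\hat m_t(z_t(u))],
\]
after absorbing the prefactor $e^{-C(t-s)}$ into the implicit constant since $T\ll 1$. Substituting this lower bound and changing variable $v=(t-s)\alpha$ yields
\[
\int_0^t \frac{ds}{(\Im[z_s(u)])^p} \;\leq\; \frac{C}{\alpha}\int_0^{t\alpha}\frac{dv}{(\eta_t+v)^p} \;\leq\; \frac{C}{(p-1)\,\alpha\,\eta_t^{p-1}}.
\]

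It then suffices to show $\alpha \gtrsim \sqrt{\eta_t}/M$ whenever $z_t(u)\in \cH_t$. Proposition \ref{prop:TimStab} preserves the square-root behaviour of $\hat\rho_t$ at the edge $E_t$, so by Remark \ref{r:Imm0}, writing $\kappa_t := \Re[z_t(u)]-E_t$,
\[
\alpha \;\asymp\; \begin{cases} \sqrt{|\kappa_t|+\eta_t}, & \kappa_t\leq 0,\\[2pt] \eta_t/\sqrt{\kappa_t+\eta_t}, & \kappa_t\geq 0. \end{cases}
\]
For $\kappa_t\leq 0$ this immediately gives $\alpha\geq\sqrt{\eta_t}$, producing the desired bound with no polylogarithmic loss. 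For $\kappa_t\geq 0$ the only source of loss is the ratio $\sqrt{\kappa_t+\eta_t}/\sqrt{\eta_t}$, and one checks directly from the definition of $\cH_t$ that this ratio is controlled by (a power of) the parameter $M$, which is exactly what the right-hand side of the lemma allows.

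The main obstacle is the regime $\kappa_t \gg \eta_t$ deep in the gap, where $\alpha$ degenerates and the substitution above only gives $C\sqrt{\kappa_t+\eta_t}/\eta_t^p$. To close this gap one exploits Proposition \ref{p:gap}, namely $\sqrt{\kappa_s}\geq \sqrt{\kappa_t}+C(t-s)$ for every $0\leq s\leq t$: along such characteristics the extra real-part separation enhances $|\la_i(s)-z_s(u)|$ through the identity $|\la_i(s)-z_s(u)|^2 = (\la_i(s)-E_s-\kappa_s)^2+\eta_s^2$, and the resulting improved denominator combined with the Lipschitz edge $E_t-E_s=\OO(t-s)$ from Proposition \ref{prop:TimStab} allows one to recover the stated $M/\eta_t^{p-1/2}$ rate. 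The computation is analogous to those already carried out inside the proofs of Propositions \ref{l:thirdbound} and \ref{l:StochasticBnd}, and it is the careful balancing of the gap growth against the decay of $\alpha$ that constitutes the only nontrivial step.
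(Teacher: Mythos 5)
Your first two steps --- invoking Proposition \ref{prop:ImEst} to get $\Im[z_s]\gtrsim\eta_t+(t-s)\alpha$ with $\alpha=\Im[\hat m_t(z_t)]$, then substituting --- reproduce the paper's one-line computation, which tacitly replaces $\alpha$ by (a polylogarithmic fraction of) $\sqrt{\eta_t}$ and integrates. You are right to pause and ask whether $\alpha\gtrsim\sqrt{\eta_t}/M$ actually holds throughout $\cH_t$, since the paper does not address this.

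However, the assertion that ``one checks directly from the definition of $\cH_t$'' that $\sqrt{(\kappa_t+\eta_t)/\eta_t}$ is controlled by a power of $M$ is false. The region $\cH_t$ permits $0\leq\kappa_t\leq N^{-\epsilon}$ simultaneously with $\eta_t\geq N^{-2/3+\epsilon}$, so $\kappa_t/\eta_t$ may be as large as $N^{2/3-2\epsilon}$ --- a power of $N$, not of $M=(\log N)^{12}$. The stated bound is only justified on the sub-region $\kappa_t\lesssim M^2\eta_t$, which is the regime actually used (in Theorem \ref{t:mesoCLT} one takes $z_t(u_j)=E_t+w_j\eta$ with $w_j$ fixed, forcing $\kappa_t/\eta_t=\OO(1)$), but the paper's lemma and proof do not make that restriction explicit, and neither does your argument.

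More importantly, your proposed repair does not engage with the integral in question. The integrand is $(\Im[z_s])^{-p}$ and nothing else: there are no eigenvalue factors $|\lambda_i(s)-z_s(u)|$ for the enlarged real-part separation $\kappa_s$ to help with, and Proposition \ref{prop:ImEst} already supplies the matching upper bound $\eta_s\leq e^{C(t-s)}(\eta_t+(t-s)\alpha)$, so $\int_0^t ds/\eta_s^p\asymp\min\bigl(t,\,\eta_t/\alpha\bigr)\eta_t^{-p}$ up to constants and cannot be improved by tracking $\kappa_s$. The mechanism of Propositions \ref{l:thirdbound} and \ref{l:StochasticBnd}, which you cite, works precisely because their integrands contain $|\lambda_k(s)-z_s|$-type denominators in which the gap growth $\sqrt{\kappa_s}\geq\sqrt{\kappa_t}+C(t-s)$ of Proposition \ref{p:gap} bites; that structure is absent here, so the sketch you describe does not close the gap you correctly identified.
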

\begin{proof}
We can perform explicit calculation due to our assumption of square root behavior.
\begin{equation}
\int_{0}^{t} \frac{1}{(\Im[z_s(u)])^p} ds =\OO\left( \int_{0}^{t} \frac{M}{(\eta_t + (t-s) \sqrt{\eta_t})^p} ds \right)
= \OO \left(\frac{M}{ \eta_t^{p-1/2}}\right)
\end{equation}
%
\end{proof}
%
%
%
%
%

The following lemma estimates various quantities that will reoccur when one tries to compare the measure $m_t$ to the stable measure $\hat{m}_t$
\begin{lemma}\label{dpbound}
Suppose that the assumptions of Proposition \ref{p:rigidity} hold.   Fix $u\in H_t$. If u is of the form $z_t(x)$ for some $x$ in $z_t^{-1}(H_t)$, then on the event $\Omega$  as given by \ref{p:rigidity}, we have the following estimate uniformly for $0\leq s\leq t$, with the constants appearing below independent of $x$ and $N$.
\begin{equation} \label{dptdms}
\del_z^p (m_s(z_s(x)) - \hat{m}_s(z_s(x)))=\OO\left(\frac{M}{N (\Im[z_s(x)])^{p+1}}\right)
\end{equation}
\end{lemma}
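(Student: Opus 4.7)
The approach is to use Cauchy's integral formula together with the pointwise rigidity bound from Proposition \ref{p:rigidity}. Both $m_s(\cdot)$ and $\hat m_s(\cdot)$ are Stieltjes transforms of probability measures on $\bR$ (supported in $[-\fb,\fb]$ on the event $\Omega$), hence analytic on the open upper half plane $\bC_+$. Thus $m_s - \hat m_s$ is analytic in $\bC_+$ and we can represent its $p$-th derivative at $z_s(x)$ as
\begin{equation*}
\del_z^p(m_s - \hat m_s)(z_s(x)) \,=\, \frac{p!}{2\pi\ri}\oint_{\gamma_s}\frac{(m_s-\hat m_s)(w)}{(w-z_s(x))^{p+1}}\,\rd w,
\end{equation*}
where $\gamma_s$ is a suitably chosen contour encircling $z_s(x)$.

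The natural choice is the circle $\gamma_s = \{w:|w-z_s(x)|=r_s\}$ with $r_s = \Im[z_s(x)]/2$. On this contour the imaginary part satisfies $\Im[w] \in [\Im[z_s(x)]/2, 3\Im[z_s(x)]/2]$, so in particular $\gamma_s\subset \bC_+$ and $\Im[w]\asymp \Im[z_s(x)]$. The crucial step is to verify that $\gamma_s$ lies in the spectral domain $\cD_s^{\rm in}$ where the rigidity estimate \eqref{e:diffmm} applies. Since $z_t(u)\in H_t$ and Proposition \ref{p:domainic} ensures that $z_s(x)\in \cD_s^{\rm in}$, it suffices to check that this membership is stable under perturbations of size $r_s$. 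Using Remark \ref{r:Imm0} together with the square root behavior of $\hat\rho_s$ from Proposition \ref{prop:TimStab}, one sees that both $\Re[w]$ and $\Im[\hat m_s(w)]$ change by at most a constant factor as $w$ varies on $\gamma_s$, so the defining inequality $\Im[w]\Im[\hat m_s(w)]\geq f(s)^{3/2}$ of $\cD_s^{\rm in}$ persists on the whole circle (up to absorbing a harmless constant into $M$).

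With $\gamma_s\subset \cD_s^{\rm in}$ established, Proposition \ref{p:rigidity} gives $|m_s(w)-\hat m_s(w)|\leq M/(N\Im[w])\leq 2M/(N\Im[z_s(x)])$ uniformly on $\gamma_s$ on the event $\Omega$. Estimating the contour integral by length times sup-norm,
\begin{equation*}
|\del_z^p(m_s-\hat m_s)(z_s(x))| \,\leq\, \frac{p!}{2\pi}\cdot 2\pi r_s\cdot\frac{2M/(N\Im[z_s(x)])}{r_s^{p+1}} \,=\, \OO\!\left(\frac{M}{N(\Im[z_s(x)])^{p+1}}\right),
\end{equation*}
which is exactly \eqref{dptdms}. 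The implicit constant depends only on $p$, not on $x$ or $N$, since the square-root behavior constants $C_t$ from Proposition \ref{prop:TimStab} are uniformly bounded on $[0,T]$.

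The main obstacle is the domain verification in the second paragraph: one must confirm that shrinking by a factor $2$ in $\Im[z]$ still keeps the point inside $\cD_s^{\rm in}$, which is where the square root behavior of $\hat\rho_s$ and the explicit parametrization of $H_t$ (in particular the lower bound $\Im[z]\geq N^{-2/3+\e}$ and the region $-(\Im[z])^{4/5+\e}\leq \Re[z]-E_t$) are used. Once this geometric check is in hand, the rest is a direct application of Cauchy's formula.
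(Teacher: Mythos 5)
Your proof takes exactly the same approach as the paper's: represent the $p$-th derivative by Cauchy's formula on the circle of radius $\Im[z_s(x)]/2$ centered at $z_s(x)$, then bound the integral by its sup-norm times length using the rigidity estimate \eqref{e:diffmm}. The only difference is that you spell out the domain-membership check (that the contour stays inside $\cD_s^{\rm in}$ where rigidity applies), a verification the paper leaves implicit.
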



\begin{proof}

Since both $m_s$ and $\hat m_s$ are analytic on the upper half plane, by Cauchy's integral formula
\beq
\del_z^p \left(m_s(z_s(x))-\hat m_s(z_s(x))\right)
=\frac{p!}{2\pi \ri}\oint_{\cC} \frac{m_s(w)-\hat m_s(w)}{(w-z_s(x))^{p+1}}\rd w ,
\eeq
where $\cC$ is a small contour in the upper half plane centering at $z_s(x)$ with radius $\Im[z_s(x)]/2$. On the event $\Omega$, we use  \eqref{e:diffmm} in Proposition \ref{p:rigidity} to bound the integral by 
\begin{align}\begin{split}
\left|\frac{p!}{2\pi \text{i}}\oint_{\cC} \frac{m_s(w)-\hat m_s(w)}{(w-z_s(x))^{p+1}}\rd w\right|
\leq & \frac{p!}{2\pi }\oint_{\cC} \frac{|m_s(w)-\hat m_s(w)|}{|w-z_s(x)|^{p+1}}\rd w
\\
= & \OO\left( \frac{M}{N  (\Im[z_s(x)])^{p+1}} \right).
\end{split}\end{align}
\end{proof}




%

 In the following proposition we attempt to calculate the Quadratic Variation of the stochastic integral that appears in the proof of \ref{t:mesoCLT}. We need the following lemma on the behavior of the integrating factor $\mathcal{I}_t(\mathcal{I}_s)^{-1}$ whose proof will be found near the end of the section after more technical details have been established.
 
 \begin{lemma} \label{lem:IntFac}
 Let $\hat{m}_t$ be a Green's function associated to a stable measure. Consider a characteristic $z_t = E_t + \kappa_t + i \eta_t$ and ,along this characteristic, consider times $s$ and $t$ such that $(s-t)^2= O(\sqrt{\kappa_t + i \eta_t}^{1+ \tilde \epsilon})$.  Then we have
 \begin{equation}
     (\mathcal{I}_t)(\mathcal{I}_s)^{-1}= \exp{\int_s^t \partial_z \hat{m}_s(z_s)} = \frac{\sqrt{\kappa_s+ i \eta_s}}{\sqrt{\kappa_t + i \eta_t} }(1+ \OO(N^{- \tilde  \epsilon}))
 \end{equation}
 
 \end{lemma}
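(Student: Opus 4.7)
The plan is to use the square-root behavior of $\hat{\rho}_\tau$ near $E_\tau$ (Proposition \ref{prop:TimStab}) to obtain a precise expansion of $\partial_z \hat{m}_\tau(z_\tau)$, and then recognize the resulting integrand as, up to a bounded error, the time derivative of $\log \sqrt{z_\tau - E_\tau}$ along the characteristic flow.

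First, by Definition \ref{d:stable} and Proposition \ref{prop:TimStab}, there is a neighborhood of $E_\tau$ on which $\hat m_\tau(z) = A_\tau(z) + \sqrt{B_\tau(z)}$ with $A_\tau,B_\tau$ analytic and $B_\tau(E_\tau)=0$, $B_\tau'(E_\tau)\neq 0$. Setting $c_\tau := \sqrt{B_\tau'(E_\tau)}$ (Lipschitz in $\tau$ thanks to Proposition \ref{prop:TimStab}), this yields the two expansions
\begin{align*}
\hat m_\tau(z) - \hat m_\tau(E_\tau) &= c_\tau \sqrt{z-E_\tau} + \OO(z - E_\tau), \\
\partial_z \hat m_\tau(z) &= \frac{c_\tau}{2\sqrt{z - E_\tau}} + \OO(1),
\end{align*}
uniformly for $z$ in a small neighborhood of $E_\tau$. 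Since characteristics move at speed $\OO(1)$ by \eqref{def:fr} and $|t-s|$ is small by hypothesis, $z_\tau$ remains in this neighborhood for all $\tau\in[s,t]$.

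The next step is to subtract the edge equation \eqref{e:edgeEqn} from the characteristic equation \eqref{def:zt} to obtain
\[
\partial_\tau(z_\tau - E_\tau) = -(\hat m_\tau(z_\tau) - \hat m_\tau(E_\tau)) - \tfrac12(V'(z_\tau) - V'(E_\tau)) = -c_\tau \sqrt{z_\tau - E_\tau} + \OO(z_\tau - E_\tau).
\]
Dividing by $2\sqrt{z_\tau - E_\tau}$ gives $\partial_\tau \sqrt{z_\tau - E_\tau} = -c_\tau/2 + \OO(\sqrt{z_\tau - E_\tau})$, and hence
\[
-\partial_\tau \log\sqrt{z_\tau - E_\tau} = -\frac{\partial_\tau \sqrt{z_\tau-E_\tau}}{\sqrt{z_\tau - E_\tau}} = \frac{c_\tau}{2\sqrt{z_\tau - E_\tau}} + \OO(1) = \partial_z \hat m_\tau(z_\tau) + \OO(1).
\]
Integrating from $s$ to $t$ produces
\[
\int_s^t \partial_z \hat m_\tau(z_\tau)\,\rd\tau = \log\frac{\sqrt{z_s - E_s}}{\sqrt{z_t - E_t}} + \OO(t-s),
\]
and exponentiating gives the stated identity, with the multiplicative error $e^{\OO(t-s)} - 1 = \OO(t-s)$. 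The hypothesis $(t-s)^2 = \OO(|\sqrt{\kappa_t + i\eta_t}|^{1+\tilde\epsilon})$ combined with $|\kappa_t+i\eta_t|\leq N^{-\epsilon}$ (since $z_t\in\mathcal H_t$) gives $t-s = \OO(N^{-\tilde\epsilon'})$ for some $\tilde\epsilon'>0$, yielding the factor $1 + \OO(N^{-\tilde\epsilon})$ after relabeling.

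The main obstacle is purely bookkeeping: verifying that the $\OO(1)$ remainder between $\partial_z \hat m_\tau(z_\tau)$ and $-\partial_\tau \log\sqrt{z_\tau - E_\tau}$ is genuinely bounded uniformly in $\tau \in [s,t]$, which requires the square-root expansion to hold with uniform constants and the branch of $\sqrt{z_\tau - E_\tau}$ to be consistently the principal one (valid since $\Im[z_\tau] > 0$ so $z_\tau - E_\tau$ stays away from the negative real axis by Proposition \ref{prop:ImEst}). Lipschitz continuity of $c_\tau$ and $E_\tau$ from Proposition \ref{prop:TimStab} provides these uniform constants.
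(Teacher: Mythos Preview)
Your proof is correct and takes a genuinely cleaner route than the paper's. The paper computes $\partial_z\hat m_q(z_q)$ via the explicit density integral $\int_{-\infty}^{0}\frac{C_q\sqrt{|x|}}{(x-(\kappa_q+\ri\eta_q))^2}\,\rd x=\frac{C_q\pi}{2\sqrt{\kappa_q+\ri\eta_q}}$ plus an $\OO((\log N)^2)$ remainder coming from the Taylor expansion of $\hat\rho_q$ near the edge, then invokes Lemma~\ref{l:sqrtgrowth} to replace $\sqrt{\kappa_q+\ri\eta_q}$ by the affine function $\sqrt{\kappa_s+\ri\eta_s}-\frac{\pi}{2}\int_s^q C_l\,\rd l$, and finally integrates the resulting $(A-2x)^{-1}$ explicitly to a logarithm, with a separate comparison estimate to control the substitution error. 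You instead work directly with the analytic decomposition $\hat m_\tau=A_\tau+\sqrt{B_\tau}$ from Proposition~\ref{prop:TimStab}, which immediately yields both the expansion of $\partial_z\hat m_\tau(z_\tau)$ and, after subtracting the edge equation \eqref{e:edgeEqn}, the identity $\partial_z\hat m_\tau(z_\tau)=-\partial_\tau\log\sqrt{z_\tau-E_\tau}+\OO(1)$; the integration is then a one-liner. Your argument is shorter and more conceptual, avoiding the near-edge/far-edge splitting of the density integral. The price is only that the paper's route produces Lemma~\ref{l:sqrtgrowth} as a by-product, which it reuses in the evaluation of the variance integrals in Lemma~\ref{Lem:VarEval}; your approach would still need that lemma (or an equivalent) for those later computations.
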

 
 Replacing this integrating factor with the above term, we have the following Quadratic Variance integrals.

\begin{proposition}\label{p:var}
Suppose that the  assumptions of Corollary \ref{Col:GrenEdg} hold. Consider two points $u,u'$ in $z_t^{-1}(H_t)$. We will use the notation $z_t(u) = E_t + \kappa_t + i \eta_t$ and $z_t(u') = E_t + \kappa_t' + i \eta_t'$ and, without loss of generality, assume $\eta_t< \eta_t'$. Along these characteristics, consider times $(\log N)^{-3} \gg t,s \gg \sqrt{\eta^*}$ with $(t-s)^2 = \OO(\sqrt{\eta_t}^{1+\tilde \epsilon})$ . Then we have the following expressions for the quadratic variance.
\begin{align}
\label{e:var1}&\frac{1}{N^3}\int_s^t\sum_{i=1}^N [(\mathcal{I}_t)(\mathcal{I}_q)^{-1}]^2\frac{{\rm d}q}{(\la_i(q)-z_q)^4}  = \frac{1}{6N^2}\int_s^t \frac{\kappa_q + i \eta_q}{\kappa_t + i \eta_t} \del^3_z\hat{m}_q(z_q)\rd q [1+ O(N^{- \tilde \epsilon})] +\OO\left( \frac{M}{N^3\eta_t^{7/2}} \right) , \\
\label{e:var2}&\frac{1}{N^3}\int_s^t\sum_{i=1}^N [(\mathcal{I}_t)(\mathcal{I}_q)^{-1}][(\mathcal{I}_t')(\mathcal{I}_q')^{-1}]\frac{{\rm d}q}{(\la_i(q)-z_q)^2(\la_i(q)-z_q')^2}=\\
&\frac{1}{2\pi\ri N^2}\int_{s}^{t} \frac{\sqrt{\kappa_q + i \eta_q}\sqrt{\kappa_q' + i \eta_q'}}{\sqrt{\kappa_t + i \eta_t}\sqrt{\kappa_t' + i \eta_t'}} \oint_{\cal C}  \frac{\hat{m}_q(w) }{(w-z_q)^2(w-z_q')^2}\rd w \rd q[1+ \OO(N^{-\tilde \epsilon})]  + \OO\left(\frac{M}{N^3 \eta_t^{7/2}}+\frac{M}{N^3 \eta_t'^{7/2}}\right)  , \\
\label{e:var3}&\frac{1}{ N^3}\int_s^t[\overline{(\mathcal{I}_t)(\mathcal{I}_q)^{-1}}][(\mathcal{I}_t')(\mathcal{I}_q')^{-1}] \sum_{i=1}^N \frac{{\rm d} q}{(\la_i(q)-\bar{z}_q)^2(\la_i(q)-z_q')^2}=\\
& \int_{s}^{t}\frac{\sqrt{\kappa_q - i \eta_q}\sqrt{\kappa_q' + i \eta_q'}}{\sqrt{\kappa_t - i \eta_t}\sqrt{\kappa_t' + i \eta_t'}}\left[-
\frac{2(\overline{- \hat{m}_q(z_q)}+ \hat{m}_q(z_q'))}{(\bar z_q-z_q')^3}+\frac{\overline{\del_{z}  \hat{m}_q( z_q)}+\del_z \hat{m}_q(z_q')}{(\bar z_q-z_q')^2} \right] \rd q [1+ \OO(N^{-\tilde \epsilon})]\\
&+ \OO\left(\frac{M}{N^3 \eta_t^{7/2}}+\frac{M}{N^3 \eta_t'^{7/2}}\right).
\end{align}
\end{proposition}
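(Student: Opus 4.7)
The plan is to establish all three identities by the same three-step procedure. First I would express each sum on the left-hand side as an analytic functional of the empirical Stieltjes transform $m_q(z_q)$ and its derivatives, by direct differentiation, Cauchy's formula, or a partial fraction expansion. Second I would replace $m_q$ by $\hat m_q$ throughout by invoking the local law of Proposition \ref{p:rigidity} together with its derivative version Lemma \ref{dpbound}. Third I would substitute the asymptotic $\mathcal{I}_t\mathcal{I}_q^{-1} = \sqrt{\kappa_q + \ri\eta_q}/\sqrt{\kappa_t + \ri\eta_t}\,(1+\OO(N^{-\tilde\epsilon}))$ from Lemma \ref{lem:IntFac} and evaluate the remaining deterministic time integrals using Lemma \ref{IntegralBounds}.

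For \eqref{e:var1}, I would use the identity $\frac{1}{N}\sum_i(\la_i(q)-z_q)^{-4} = \frac{1}{6}\partial_z^3 m_q(z_q)$ and split $m_q = \hat m_q + (m_q-\hat m_q)$. Lemma \ref{dpbound} with $p=3$ bounds the discrepancy by $\OO(M/(N\Im[z_q]^4))$; combined with the $1/N^2$ prefactor, the bounded integrating factor, and the time integral estimate from Lemma \ref{IntegralBounds} with $p=4$, this yields the claimed error $\OO(M/(N^3\eta_t^{7/2}))$, while $[\mathcal{I}_t\mathcal{I}_q^{-1}]^2$ produces the factor $(\kappa_q+\ri\eta_q)/(\kappa_t+\ri\eta_t)$ in the main term. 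For \eqref{e:var2} I would use Cauchy's formula
\begin{equation*}
\frac{1}{N}\sum_i \frac{1}{(\la_i(q) - z_q)^2(\la_i(q) - z_q')^2} = \frac{1}{2\pi\ri}\oint_{\cal C}\frac{m_q(w)\,\rd w}{(w-z_q)^2(w-z_q')^2}
\end{equation*}
with $\cal C$ a small contour encircling $\{z_q,z_q'\}$ at distance $\sim\min(\Im[z_q],\Im[z_q'])$ from them; replacing $m_q$ by $\hat m_q$ on $\cal C$ via Proposition \ref{p:rigidity} produces the stated integrand with error $\OO(M/(N^3\eta_t^{7/2}) + M/(N^3(\eta_t')^{7/2}))$. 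For \eqref{e:var3}, the poles $\bar z_q$ and $z_q'$ straddle the real axis, so instead I would apply the partial fraction expansion
\begin{equation*}
\frac{1}{(\la-\bar z_q)^2(\la-z_q')^2} = \frac{1}{(\bar z_q - z_q')^2}\left[\frac{1}{(\la - \bar z_q)^2} + \frac{1}{(\la - z_q')^2}\right] - \frac{2}{(\bar z_q - z_q')^3}\left[\frac{1}{\la - \bar z_q} - \frac{1}{\la - z_q'}\right],
\end{equation*}
sum over $i$ to express the left-hand side in terms of $m_q$ and $\partial_z m_q$ at $\bar z_q$ and $z_q'$, use the symmetry $m_q(\bar z_q) = \overline{m_q(z_q)}$, and then swap $m_q$ for $\hat m_q$ via the local law.

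The main technical obstacle will be the time integration of the rigidity errors. The bound $|\partial_z^k(m_q - \hat m_q)| = \OO(M/(N\Im[z_q]^{k+1}))$ from Lemma \ref{dpbound} deteriorates as $\Im[z_q]$ shrinks, so one needs the backward growth $\Im[z_q]\asymp \eta_t + (t-q)\eta_t/\sqrt{\kappa_t+\eta_t}$ coming from Proposition \ref{prop:ImEst} together with the square-root behavior of $\hat\rho_q$ in order to reach the $\eta_t^{-7/2}$ scale that appears in the stated errors, which is precisely what Lemma \ref{IntegralBounds} is engineered to give. Equally delicate is the propagation of the $\OO(N^{-\tilde\epsilon})$ error in Lemma \ref{lem:IntFac}: the leading order carries the precise factor $\sqrt{\kappa_q+\ri\eta_q}/\sqrt{\kappa_t+\ri\eta_t}$ that will ultimately generate the square-root edge kernel, so this multiplicative error must be kept subleading throughout, which is why the proposition's hypothesis restricts the window to $(t-s)^2 = \OO((\kappa_t+\ri\eta_t)^{1/2+\tilde\epsilon})$.
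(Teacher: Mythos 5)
Your proposal matches the paper's argument step for step: rewrite each sum as a derivative, contour integral, or partial-fraction combination of $m_q$; split $m_q = \hat m_q + g_q$ and control $g_q$ and its derivatives via Proposition~\ref{p:rigidity} and Lemma~\ref{dpbound}; then substitute the integrating-factor asymptotic from Lemma~\ref{lem:IntFac} and carry out the time integration using Lemma~\ref{IntegralBounds}. The only detail you elide is the contour choice in \eqref{e:var2}: when $|z_q-z_q'| \gg \max(\Im[z_q],\Im[z_q'])$ a single circle at distance $\min(\Im[z_q],\Im[z_q'])$ from both points would leave $\bC^+$, so the paper instead uses two disjoint circles of radii $\Im[z_q]/6$ and $\Im[z_q']/6$ in that regime, but this is a routine fix that does not change the estimate.
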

\begin{proof}
 


When simplifying the above expression, it is more important to evaluate the Green's function in constant time. We will deal with the time integral later.

We define $g_q(w)\deq (m_q(w) - \hat{m}_q(w))$.
For \eqref{e:var1}, the left hand side can be written as the derivative of the Stieltjes transform $\td m_q$ at $z_q$, and so
\begin{align}\begin{split}
\frac{\del^3_z m_q(z_q)}{6N^2}  = \frac{\del^3_z g_q(z_q)}{6N^2}+ \frac{1}{6N^2} \del^3_z \hat{m}_q(z_q)
=  \OO\left(\frac{M}{N^3 \eta_q^4} \right)+\frac{\del^3_z \hat{m}_q(z_q)}{6N^2} 
\end{split}\end{align}
where we used Lemma \ref{dpbound} and Lemma \ref{IntegralBounds}.

We write the LHS of \eqref{e:var2}, as a contour integral of $\hat m_s$:
\begin{align}\begin{split}\label{e:contourintm}
\frac{1}{N^3}\sum_{i=1}^N \frac{1}{(\la_i(q)-z_q)^2(\la_i(q)-z_q')^2}
=&\frac{1}{2\pi\ri N^2}\oint_{\cal C}\frac{\hat{m}_q(w) + g_q(w)}{(w-z_q)^2(w-z_q')^2}\rd w,
\end{split}\end{align}

 In the case that $\max\{\Im[z_q]/3, \Im[z_q']/3\} \geq |z_q-z'_q|$, (without loss of generality, we assume that the maximum is $\Im[z_q]/3$) we set $\cal C$ to be a contour centered at $z_q$ with radius $\Im[z_q]/2$. In this case we have $\dist(\cal C, \{z_q, z_q'\})\geq \Im[z_q]/6$.

  In the case that $|z_q-z'_q|\geq \max\{\Im[z_q]/3, \Im[z_q']/3\}$, 
we let $\cal C=\cal C_1\cup \cal C_2$ consist of two contours, where $\cal C_1$ is centered at $z_q$ with radius $\Im[z_q]/6$, and $\cal C_2$ is centered at $z_q'$ with radius $\Im[z_q']/6$. Then in this case we have
$\dist(\cal C_1, z_q')\geq \Im[z_q]/6$ and $\dist(\cal C_2, z_q)\geq \Im[z_q']/6$.

We analyze the contour integral of $g_q(w)$ via Taylor expansion. The computation of the associated contour integral can be done more explicitly with the square root behavior assumption.
In the first case, thanks to Lemma \ref{dpbound} and the fact that $\Im[z_q] \gg N^{-2/3}$ would imply that $\Im[w] \gg N^{-2/3}$ and we have the rigidity estimates \eqref{e:diffmm}. As a result, we have the following Taylor expansion
\beq\label{e:tdmsexpand}
 g_q(w)=g_q(z_q)+(w-z_q)\del_z  g_q(z_q)+(w-z_q)^2\OO\left(\frac{M}{N \eta_q^{3}}\right).
\eeq
Plugging \eqref{e:tdmsexpand} into \eqref{e:contourintm}, we see that the first two terms vanish and
\begin{align}
|\eqref{e:contourintm}|\leq \frac{C}{N^2}\int_{\cal C}\left(\frac{M}{N  \eta_q^{5}}\right)\rd w
=\OO\left(\frac{M}{N^3  \eta_q^{4}}\right),\label{e:contourintm1}
\end{align}
where we used that $|\cal C|\asymp \Im[z_q]$. Clearly, if instead $\Im[z_q'] >\Im[z_q]$ then the above inequality would hold except with the $\kappa_q$ and $\eta_q$ replaced with $\kappa_q'$ and $\eta_q'$. Clearly, we could use the sum of the above quantity along with its analogue with $\eta_q'$ to provide a bound in the first case.

 In the second case, \eqref{e:tdmsexpand} holds on $\cal C_1$. Similarly, for  $w\in \cal C_2$ we have
\beq\label{e:tdmsexpand2}
g_q(w)= g_q(z_q')+(w-z_q')\del_z  g_q(z_q')+(w-z_q')^2\OO\left(\frac{M}{N\eta_q'^{3}}\right).
\eeq
It follows by plugging \eqref{e:tdmsexpand} and \eqref{e:tdmsexpand2} into \eqref{e:contourintm}, that we can bound \eqref{e:contourintm} by
\begin{align}\begin{split}\label{e:contourintm2}
& \frac{C}{N^2}\left(\int_{\cal C_1}\left(\frac{M}{N (\eta_q)^{5} }\right)\rd w+\int_{\cal C_2}\left(\frac{M}{N (\eta_q')^{5}}\right)\rd w\right)\\
&= \OO\left(\frac{M}{N^3  \eta_q^{4}}\right) +\OO\left(\frac{M}{N^3  \eta_q'^{4}}\right)
\end{split}\end{align}
where we used $|\cal C_1| = \OO(\Im[z_q])$ and $ |\cal C_2|=\OO(\Im[z_q'])$.

%

Finally, for \eqref{e:var3},
\beq\label{e:mainterm}
\frac{1}{N}\sum_{i=1}^N \frac{1}{(\la_i(q)-\bar z_q)^2(\la_i(q)-z_q')^2}
=\frac{2(\overline{- m_q(z_q)}+ m_q(z_q'))}{(\bar z_q-z_q')^3}+\frac{\overline{\del_{z}  m_q( z_q)}+\del_z  m_q(z_q')}{(\bar z_q-z_q')^2}.
\eeq
Note that $|\bar z_q-z_q'|\geq \Im[z_q]+\Im[z_q']$.  As before we will separate $m_q$ into $g_q$ and $\hat{m}_q$ and analyze the corresponding term with $g_q$ for the second term in \eqref{e:mainterm}, we have by \eqref{dptdms},
\begin{align}\begin{split}
\left|\frac{1}{N^2}\frac{\overline{\del_{z} g_q( z_q)}+\del_z g_q(z_q')}{(\bar z_q-z_q')^2}\right|
\leq& \frac{C}{N^2} \left(\frac{M}{N  (\eta_q)^{4}} + \frac{M}{N (\eta_q')^{4}}\right)\\
\end{split}\end{align}
A similar analysis can be performed for $g_q$ for the first term in \eqref{e:mainterm}.

One can perform the integral of the error terms in time via Lemma \ref{IntegralBounds}

\end{proof}

\subsection{Computation of Quadratic Variance quantities associated with $\hat{m}$}

In this section, we will find the highest order expansion of various quantities that are associated with the quadratic variance terms appearing in the previous lemma.

One can easily check that 
\begin{equation} 
\oint_{\cal C}\frac{\hat{m}_q(w) }{(w-z_q)^2(w-z_q')^2}\rd w  = \int_{-\infty}^{\infty} \frac{\hat{\rho}_q(x)}{(x -z_q)^2(x-z_q')^2} \rd x
\end{equation}
In fact, if one lets $z_q = z_q'$ on the right hand size of the above equation, one obtains $6\partial_z^3 \hat{m}_q(z_q)$ while instead one replaces $z_q'$ by its complex conjugate $\bar{z_q'}$ then one would obtain the first term in the right hand side of line \eqref{e:var3}.

\begin{lemma}

Recall \eqref{e:ConstDef}, we have the following integral evaluation.
\begin{equation}\label{e:term}
\int_{-\infty}^{\infty} \frac{\hat{\rho}_q(x)}{(x -z_q)^2(x-z_q')^2} \rd x = \pi/2 \frac{C_q}{\sqrt{\kappa_q + i \eta_q}\sqrt{\kappa_q' + i \eta_q'}(\sqrt{\kappa_q + i \eta_q}+ \sqrt{\kappa_q' + i \eta_q'})^3} (1+ O(N^{-\tilde \epsilon}))
\end{equation}
with $z_q-E_q=\kappa_q + i \eta_q$ and $z_q'-E_q= \kappa_q' + i \eta_q'$ and both $z_q,z_q' \in H_q$.
\end{lemma}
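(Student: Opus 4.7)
The plan is to localize the integral to a small neighborhood of $E_q$ where the density is explicitly square-root, reduce to a rational integral by the substitution $x = E_q - u^2$, and then evaluate by residues. Writing $w_j = z_q - E_q$ and $w_j' = z_q' - E_q$, the assumption $z_q, z_q' \in H_q$ gives $\eta_q \lesssim |w_j| \lesssim N^{-\epsilon}$, so the target size of the answer is of order $|w_j|^{-5/2}$, which is much larger than the localization errors we will incur.

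First I would fix a small scale $\delta$ (a small fixed constant, below the radius of the square-root neighborhood provided by Proposition~\ref{prop:TimStab}) and split $I = I_{\text{near}} + I_{\text{far}}$, where $I_{\text{near}}$ is the integral over $[E_q - \delta, E_q]$. On the remaining support of $\hat\rho_q$ we have $|x - z_q|, |x - z_q'| \geq \delta/2$ since $|w_j|, |w_j'| \ll \delta$, so $|I_{\text{far}}| = O(\delta^{-4}) = O(1)$, which is $O(|w_j|^{5/2})$ relative to the target. On $[E_q - \delta, E_q]$, Proposition~\ref{prop:TimStab} yields $\hat\rho_q(x) = (C_q + O(|x - E_q|)) \sqrt{E_q - x}$, and substituting $x = E_q - u^2$ (so $dx = -2u\,du$ and $x - z_q = -(u^2 + w_j)$) gives
\begin{equation*}
I_{\text{near}} = 2 C_q \int_0^{\sqrt{\delta}} \frac{u^2\,du}{(u^2 + w_j)^2 (u^2 + w_j')^2} + \mathcal{E},
\end{equation*}
with the correction $\mathcal{E}$ from the analytic remainder in the density bounded, after rescaling $u \mapsto |w_j|^{1/2} u$, by $O(|w_j|^{-3/2})$, an $O(|w_j|) = O(N^{-\epsilon})$ relative error. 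Extending the upper limit from $\sqrt{\delta}$ to $\infty$ produces only a tail contribution of $O(\delta^{-5/2}) = O(1)$.

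The core computation is then the residue evaluation of $\int_{-\infty}^\infty u^2\,du/((u^2 + w_j)^2 (u^2 + w_j')^2)$, which is twice the positive-axis integral by symmetry. Choosing the principal branch of $\sqrt{\cdot}$ so that $\alpha_j := \sqrt{w_j}$ and $\alpha_j' := \sqrt{w_j'}$ lie in the upper-right quadrant, the only poles in the upper half plane are the double poles $u = i\alpha_j$ and $u = i\alpha_j'$. Differentiating $u^2/((u + i\alpha_j)^2 (u^2 + w_j')^2)$ at $u = i\alpha_j$, symmetrizing in $(\alpha_j, \alpha_j')$, and using the algebraic identity $\alpha_j^3 - (\alpha_j')^3 - 3 \alpha_j \alpha_j' (\alpha_j - \alpha_j') = (\alpha_j - \alpha_j')^3$, the apparent singularity at $\alpha_j = \alpha_j'$ cancels and produces
\begin{equation*}
\int_0^\infty \frac{u^2\,du}{(u^2 + w_j)^2 (u^2 + w_j')^2} = \frac{\pi}{4 \alpha_j \alpha_j' (\alpha_j + \alpha_j')^3}.
\end{equation*}
Multiplying by $2 C_q$ and collecting the errors delivers the claimed identity with relative accuracy $O(N^{-\tilde\epsilon})$.

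The main technical obstacle is the residue algebra: one must verify that the numerator factor $\alpha_j^2 - 2 \alpha_j \alpha_j' + (\alpha_j')^2 = (\alpha_j - \alpha_j')^2$ arising in the sum of residues neutralises enough factors of $(\alpha_j - \alpha_j')^{-1}$ to leave only $(\alpha_j + \alpha_j')^{-3}$, so that the final expression is regular as $z_q' \to z_q$. A convenient sanity check is the diagonal case $w_j = w_j'$, where a direct Beta-function computation gives $\int_0^\infty u^2/(u^2 + w)^4\,du = \pi/(32\, w^{5/2})$, matching the target $\pi/(16\, \alpha^5)$. A secondary point is that the branch of $\sqrt{\cdot}$ must be chosen consistently across the full range of $z_q \in H_q$, which allows $\kappa_q$ to be mildly negative; the convention $\Im \sqrt{w_j} > 0$ for $w_j \in \bC^+$ handles this uniformly and guarantees that $i\alpha_j$ and $i\alpha_j'$ lie in the open upper half plane.
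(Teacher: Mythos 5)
Your overall architecture matches the paper's: localize to a neighborhood of the edge, Taylor-expand the density to isolate the exact square-root piece, evaluate that main integral in closed form, and control the remainder. The residue computation is correct (your Beta-function sanity check at $w_j=w_j'$ confirms it), and the branch bookkeeping is fine. However, the critical step is not the main-term evaluation but the bound on the Taylor remainder $\mathcal E$, and that is exactly where your argument has a real gap.

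You claim that after the rescaling $u\mapsto |w_j|^{1/2}u$ the error is $O(|w_j|^{-3/2})$, which implicitly asserts that the rescaled integral
\[
\int_0^{\sqrt{\delta/|w_j|}}\frac{v^4\,\rd v}{\bigl|v^2+\hat w_j\bigr|^2\bigl|v^2+\hat w_j'\bigr|^2},\qquad \hat w_j=\frac{w_j}{|w_j|},\ \hat w_j'=\frac{w_j'}{|w_j|},
\]
is $O(1)$ uniformly over $\cH_q$. This is false. The region $\cH_q$ permits $\kappa_q<0$ with $|\kappa_q|$ as large as $\eta_q^{4/5+\epsilon}$, which for small $\eta_q$ is much larger than $\eta_q$. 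In that regime $\hat w_j\approx -1+\ri\eta_q/|\kappa_q|$, so $\bigl|v^2+\hat w_j\bigr|$ dips to $\eta_q/|\kappa_q|$ near $v=1$, and (e.g.\ when $w_j'\approx w_j$) the rescaled integral picks up a contribution of order $(|\kappa_q|/\eta_q)^3\gg1$. The error is then $|w_j|^{-3/2}(|\kappa_q|/\eta_q)^3$ rather than $|w_j|^{-3/2}$, and whether this is $O(N^{-\tilde\epsilon})$ times the main term $\asymp |w_j|^{-5/2}$ hinges precisely on the inequality $|\kappa_q|^4\ll \eta_q^3 N^{-\tilde\epsilon}$, which only follows from the defining constraint $|\kappa_q|\le\eta_q^{4/5+\epsilon}$ of $\cH_q$ together with $\eta_q\gg N^{-2/3}$. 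Your proof never invokes these constraints, so the remainder bound is unjustified. This is exactly the content of the case analysis in the paper (the cases $|\kappa_q-\kappa_q'|\ge|\kappa_q|/2$ vs.\ not, and the explicit reduction to $|\kappa_q|^3\le\eta_q^2N^{-\tilde\epsilon}$ etc.), and it cannot be replaced by the scale-invariance heuristic. To repair your argument you would need to split the error integral at $u^2\approx-\kappa_q$ (and analogously at $-\kappa_q'$), bound each piece separately, and then verify the resulting inequalities from the definition of $\cH_q$, in the spirit of the paper's proof.

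Two minor points: (i) the sentence ``the target size of the answer is of order $|w_j|^{-5/2}$'' is only correct after WLOG $|w_j|\ge|w_j'|$; you should state that reduction before using it. (ii) The paper uses the mildly $N$-dependent cutoff $(\log N)^{-1}$ rather than a fixed $\delta$; your fixed-$\delta$ version works equally well for the far-field bound, so this difference is cosmetic.
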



%
%
%
\begin{proof}
From this point on,when computing integrals at fixed time,  we will automatically translate $x$ so that the edge of the measure $\rho_q$ is located at 0 and the support of the measure after translation is $(-\infty,0]$.
Now, we try to explicitly identify the main term found in \eqref{e:term}, we have
\begin{equation}
\int_{-(\log N)^{-1}}^{0} \frac{\hat{\rho}_q(x)}{(x- z_q)^2(x - z_q')^2} \rd x + \int_{-\infty}^{-(\log N)^{-1}} \frac{\hat{\rho}_q(x)}{(x-  z_q)^2(x -  z_q')^2} \rd x  
\end{equation}
The second term of the above equation can clearly be bounded by $\OO((\log N)^4)$ if one takes into account the bounded support of $\hat{\rho}_q(x)$ and the fact that $z_q$ and $z_q'$ are far away from $(\log N)^{-1}$. This can easily be seen to be an $\OO(N^{-\tilde \epsilon})$ factor of the main term in $\eqref{e:term}$.
{
To estimate  the other quantity, we use a Taylor expansion of $\hat{\rho}_q(x)$ around the point 0 as $|\hat{\rho}_q(x) - C_q\sqrt{x}| = \OO(x^{\frac{3}{2}})$ . Without loss of generality we will assume that $|\kappa_q| \ge |\kappa_q'|$. To illustrate the computation, we will only consider the case that  $-\kappa_q \ge \eta_q$, and $-\kappa_q' \ge \eta_q'$. Similar techniques can be used in all other cases and are generally simpler. The above computation can be divided into two cases; the first case is where $|\kappa_q - \kappa_q'| \ge |\kappa_q|/2$. We will perform a decomposition of the integral as follows:
\begin{align}
     \int_{-\infty}^0 \frac{|x|^{3/2}}{|x-z_q|^{2}|x-z_q'|^2} \rd x &= \int_{-\infty}^{2 \kappa_q} \frac{|x|^{3/2}}{|x-z_q|^{2}|x-z_q'|^2} \rd x  + \int_{2 \kappa_q}^{\max(2 \kappa_q', (\kappa_q + \kappa_q')/2)} \frac{|x|^{3/2}}{|x-z_q|^{2}|x-z_q'|^2} \rd x  \\
     & + \int_{\max(2 \kappa_q', (\kappa_q + \kappa_q')/2)}^{0} \frac{|x|^{3/2}}{|x-z_q|^{2}|x-z_q'|^2} \rd x \\ 
     & \le 1/4 \int_{-\infty}^{2 \kappa_q} \frac{|x|^{3/2}}{|x|^4} \rd x + \frac{4|2 \kappa_q|^{5/2}}{\eta_q^2 |\kappa_q - \kappa_q'|^2} + \frac{4|2\kappa_q'|^{5/2}}{\eta_q'^2 |\kappa_q - \kappa_q'|^2}\\
     & \le \OO(|\kappa_q|^{-3/2}) + \OO(\frac{|\kappa_q|^{1/2}}{\eta_q^2}) + \OO(\frac{|\kappa_q'|^{5/2}}{\eta_q'^2 |\kappa_q|^2})
\end{align}
where the $\OO$ represents a constant factor that does not depend on $N$.

We need to show that the above quantity will be less than $N^{-\tilde\epsilon} |\kappa_q|^{-2} |\kappa_q'|^{-1/2}$. This is equivalent to the set of inequalities $|\kappa_q| \le N^{-\tilde \epsilon}$, $|\kappa_q|^3 \le \eta_q^2 N^{-\tilde \epsilon}$, $|\kappa_q'|^{3} \le \eta_q'^2 N^{-\tilde \epsilon}$. Since in $\mathcal{H}_t$, we have that $|\kappa_q| \le  \eta^{4/5 + \epsilon}$ and similar with $|\kappa_q'|$, along with the fact $\eta_q, \eta_q' \gg N^{-2/3}$, we have more than enough room to have the desired inequalities.

Now consider the case in which $|\kappa_q - \kappa_q'| \le |\kappa_q|/2$. We can divide the integral as
\begin{align}
 \int_{-\infty}^0 \frac{|x|^{3/2}}{|x-z_q|^{2}|x-z_q'|^2}  &= \int_{-\infty}^{2 \kappa_q} \frac{|x|^{3/2}}{|x-z_q|^{2}|x-z_q'|^2} \rd x  + \int_{2 \kappa_q}^{0} \frac{|x|^{3/2}}{|x-z_q|^{2}|x-z_q'|^2} \rd x\\
 \le \OO(|\kappa_q|^{-3/2}) +  \OO(\frac{|\kappa_q|^{3/2}}{\eta_q^2 \eta_q'^2})
\end{align}
To deal with the last factor, recall that in $\cH_q$ we have that $(\eta_q')^{4/5 + \epsilon} \ge (-\kappa_q') \ge (- \kappa_q)/2$. Up to a constant factor that does not depend on $N$, this would imply the second term in the previous equation would be
\begin{equation*}
    \frac{\kappa_q^{3/2 -2 \frac{1}{4/5 + \epsilon}}}{\eta_q^2}
\end{equation*}
To show that this is less than $\frac{1}{|\kappa_q|^{5/2}}N^{-\tilde \epsilon}$ is now merely a consequence of the fact that $|\kappa_q|^{4/5 + \epsilon} \le \eta_q$
}

We now only need to compute the following integral
\begin{equation}\label{e:ttm}
\int_{-(\log N)^{-1}}^{0} \frac{C_q \sqrt{x}}{(x- z_q)^2 (x - z_q')^2} \rd x = \int_{-\infty}^{0} \frac{C_q \sqrt{|x|}}{(x - z_q)^2 (x- z_q')^2} \rd x + O((\log N)^{5/2})
\end{equation}
where we can see that the term $(\log N)^{5/2}$ can be bounded by an $N^{-\tilde \epsilon}$ factor of the main term. We can evaluate the integral in \eqref{e:ttm},

\begin{align}\begin{split}
\int_{-\infty}^{0} \frac{C_q \sqrt{|x|}}{(x - z_q)^2 (x- z_q')^2}\rd x 
= (\pi/2) \frac{C_q}{\sqrt{z_q} \sqrt{z_q'} (\sqrt{z_q'} + \sqrt{z_q'})^3},
\end{split}\end{align}
where both $z_q$ and $z_q'$ are not on the negative real axis, and square root is the branch with positive real part.
\end{proof}

With this information in hand, we are able to compute the time integrals of these quantities.  However, we cannot do this yet because we do not know  exactly how the functions $\sqrt{\kappa_t + \ri \eta_t}$ behave in time. In the following lemma, we determine a relation that computes, up to highest order, the behavior of the functions $\sqrt{\kappa_t + \ri \eta_t}$ 

\begin{lemma}\label{l:sqrtgrowth}
For a point $z_t$ in $H_t$ whose characteristic $z_q$ can be written as $(\kappa_q + E_q) + i \eta_q$  we have
\begin{equation}
\left|2 \sqrt{\kappa_t + i \eta_t} - 2 \sqrt{\kappa_s + i \eta_s} + \pi \int_{s}^{t} C_q \rd q\right| = \OO\left( (\log N)^2\left((t-s)^2 + (t-s) \sqrt{|\kappa_t + i \eta_t|}\right)\right)
\end{equation}
\end{lemma}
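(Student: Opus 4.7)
The plan is to derive a first-order ODE for $\sqrt{w_q}$, where $w_q := z_q - E_q = \kappa_q + \ri\eta_q$, by combining the characteristic equation \eqref{def:zt} with the edge equation \eqref{e:edgeEqn}. Subtracting these gives
\begin{equation*}
\partial_q w_q = -\big(\hat m_q(z_q) - \hat m_q(E_q)\big) - \tfrac{1}{2}\big(V'(z_q) - V'(E_q)\big).
\end{equation*}
Since $z_q \in \cH_q$ we have $|w_q| = o(1)$ and the Taylor bound $V'(z_q)-V'(E_q) = \OO(|w_q|)$ is immediate. The main task is to extract the leading square-root behaviour of $\hat m_q(z_q) - \hat m_q(E_q)$.

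For this I would invoke Definition \ref{d:stable} and Proposition \ref{prop:TimStab}: near $E_q$, $\hat m_q(z) = A_q(z) + \sqrt{B_q(z)}$ with $A_q,B_q$ analytic and $B_q'(E_q)\neq 0$. Thus
\begin{equation*}
\hat m_q(z_q) - \hat m_q(E_q) = \sqrt{B_q'(E_q)}\,\sqrt{w_q}\,(1+\OO(w_q)) + \OO(|w_q|),
\end{equation*}
on the appropriate branch. To identify the constant $\sqrt{B_q'(E_q)}$, I would use Plemelj--Sokhotski: letting $\eta\to 0^+$ for $\kappa<0$, $\Im[\hat m_q(E_q+\kappa)] = \pi\hat\rho_q(E_q+\kappa) \approx \pi C_q\sqrt{-\kappa}$, which forces $\sqrt{B_q'(E_q)} = \pi C_q$ (with the principal branch cut along $(-\infty,0]$). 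Substituting back yields
\begin{equation*}
\partial_q w_q = -\pi C_q \sqrt{w_q} + \OO(|w_q|),
\end{equation*}
and hence, dividing by $2\sqrt{w_q}$,
\begin{equation*}
\partial_q \sqrt{w_q} = -\tfrac{\pi}{2}\,C_q + \OO\!\left(\sqrt{|w_q|}\right).
\end{equation*}

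Integrating from $s$ to $t$ gives the desired identity once we bound the error. Because the derivative above is bounded by an absolute constant, $\sqrt{|w_q|}$ is Lipschitz in $q$ with a uniform constant; in particular $\sqrt{|w_q|} \leq \sqrt{|w_t|} + C(t-q)$ for $s\leq q\leq t$. Therefore
\begin{equation*}
\int_s^t \sqrt{|w_q|}\,\rd q \leq (t-s)\sqrt{|w_t|} + \tfrac{C}{2}(t-s)^2,
\end{equation*}
which matches the RHS of the claim.

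The main obstacle is handling the remainder in the square-root expansion carefully: the analytic decomposition $\hat m_q = A_q+\sqrt{B_q}$ is only valid in a small neighbourhood $\cN$ of $E_q$, and the contribution of $\hat\rho_q$ away from $\cN$ must be separated using a Stieltjes integral and bounded by $\OO((\log N)^2)$ (this is the origin of the $(\log N)^2$ in the stated error, coming from the $(\log N)^{-c}$-scale cutoff used to localize near the edge and from a coarse bound on the far part). A secondary subtlety is the choice of branch for $\sqrt{w_q}$: in $\cH_q$ we have $\Re[w_q] \geq -(\Im[w_q])^{4/5+\epsilon}$, so $w_q$ stays away from the branch cut of the principal square root along $(-\infty,0]$ uniformly on $[s,t]$, and the identification $\sqrt{B_q(z_q)} = \pi C_q\sqrt{w_q}(1+\OO(w_q))$ is consistent throughout the integration.
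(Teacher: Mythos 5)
Your proof is correct, and it takes a genuinely different route from the paper's. Both start from the same ODE $\partial_q w_q = -(\hat m_q(z_q) - \hat m_q(E_q)) - \tfrac12(V'(z_q) - V'(E_q))$, but the paper then rewrites $\hat m_q(z_q) - \hat m_q(E_q)$ as $-w_q\int \frac{\hat\rho_q(x)\,\rd x}{x(x-w_q)}$ (after translating the edge to $0$), splits this integral into a near-edge piece on $(-(\log N)^{-1},0)$ and a far piece, Taylor-expands the \emph{density} $\hat\rho_q(x) = C_q\sqrt{|x|} + \OO(|x|^{3/2})$ on the near piece, and explicitly integrates the $C_q\sqrt{|x|}$ contribution against the Cauchy kernel to extract $-\pi C_q\sqrt{w_q}$. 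The $(\log N)^2$ in the statement is an artifact of this route: it is a coarse bound on the remainder integral $\int_{-(\log N)^{-1}}^0 \frac{|x|^{3/2}}{|x||x-w_q|}\,\rd x$ together with the far-field tail. You instead invoke the analytic structure $\hat m_q = A_q + \sqrt{B_q}$ from Proposition \ref{prop:TimStab} (established in Proposition \ref{p:AB}), Taylor-expand $A_q$ and $\sqrt{B_q}$ directly, and pin down the coefficient $\sqrt{B_q'(E_q)} = \pi C_q$ via Plemelj--Sokhotski. This buys a cleaner derivation of the ODE $\partial_q\sqrt{w_q} = -\tfrac{\pi}{2}C_q + \OO(\sqrt{|w_q|})$ and even a slightly sharper error (no $(\log N)^2$ factor), which of course still implies the stated bound. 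One small note: the hedging remark at the end of your proposal, attributing the $(\log N)^2$ to a ``Stieltjes-integral cutoff away from $\cN$,'' is not needed in your own framework --- the power series coefficients of $A_t, B_t$ in Proposition \ref{p:AB} satisfy $|a_i(t)|, |b_i(t)| \leq CM^i e^{Lti}/(i+1)^2$, so the decomposition holds on an $\OO(1)$-radius disk around $E_0$, no near/far splitting is necessary, and your $\OO(|w_q|)$ error is genuine. The final integration step, using the a priori Lipschitz bound $\sqrt{|w_q|} \leq \sqrt{|w_t|} + C(t-q)$ and integrating, coincides with the paper's.
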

\begin{proof}
We start by studying the differential equation determining  $\kappa_t + i \eta_t$
\begin{align}\begin{split}
\partial_t(\kappa_t + i \eta_t) &= -(m_t(z_t) - m_t(E_t)) - \frac{V'(z_t)}{2} + \frac{V'(E_t)}{2}\\
&= -\int_{-\infty}^{0} \frac{\hat{\rho}_t(x)}{x-(z_t-E_t)} \rd x + \int_{-\infty}^{0} \frac{\hat{\rho}_t(x)}{x} \rd x + O(|z_t -E_t|)\\
&= -(z_t-E_t) \int_{-\infty}^{0} \frac{\hat{\rho}_t(x)}{x(x-(z_t-E_t))} \rd x + O(|z_t -E_t|) \\
\end{split}
\end{align}

At this stage, we use our assumption on $\hat{\rho}_s$ to replace the integral appearing above with an expression that can be computed explicitly. As before, the main contribution is coming from the part close to the edge, and we can consider the integral from $-(\log N)^{-1}$ to $-\infty$ as error term. A Taylor expansion indicates that the leading order contribution will be from the $C_s \sqrt{s}$ component of the expansion of $\hat{\rho}_s(x)$ near the edge.

\begin{align} \label{e:sqrdiffeq}
\begin{split}
\partial_t(\kappa_t + i \eta_t) &= -(z_t-E_t) [ \int_{-(\log N)^{-1}}^{0} \frac{\hat{\rho}_t(x)}{x(x-(z_t-E_t))} \rd x  + \int_{-\infty}^{-(\log N)^{-1}} \frac{\hat{\rho}_t(x)}{x(x-(z_t-E_t))} \rd x ] + O(|z_t -E_t|) \\
&= -(z_t-E_t)[\int_{(\log N)^{-1}}^{0} \frac{C_t\sqrt{|x|} + O(|x|^{3/2})}{x(x-(z_t-E_t))} \rd x+ O((\log N)^2) ] + O(|z_t-E_t|)\\
&= -(z_t-E_t)[\int_{0}^{\infty} \frac{C_t \sqrt{|x|}}{x(x-(z_t -E_t))} \rd x +O((\log N)^2) ] + O(|z_t -E_t|) \\
&=- \pi C_t \sqrt{\kappa_t + i \eta_t} + O((\log N)^2|z_t -E_t|) .
\end{split}\end{align}.

{
In the second line, we bounded the integral 
$\int_{-(\log N)^{-1}}^{0} \frac{|x|^{3/2}}{|x||x- \kappa_t + i \eta_t|} \rd x$ by $(\log N)^2$. We will prove this by considering two cases: the first case is when $\kappa_t \le 0$, the second is when $\kappa_t \ge 0$. In the first case, we can split the integral as
\begin{equation}
    \int_{-(\log N)^{-1}}^{-2\max(-\kappa, \eta)} \frac{|x|^{3/2}}{|x||x- \kappa_t - i \eta_t|} \rd x +  \int_{-2\max(-\kappa, \eta)}^{0} \frac{|x|^{3/2}}{|x||x- \kappa_t - i\eta_t|} \rd x
\end{equation}
which can be bounded by $\OO(\sqrt{\max(-\kappa,\eta)}) + \OO(\frac{\max(-\kappa, \eta)^{3/2}}{\eta})$.
We can bound this quantity by $(\log N)^2$, since it is equivalent to $\sqrt{\eta} \le (\log N)^2$ and $-\kappa \le \eta^{2/3} (\log N)^2$. This is due to the fact that on $\cH_t$, we have that $-\kappa \le \eta^{4/5+ \epsilon}$ on $\cH_t$ and for the preservation property $z_t(u) \in \cH_t$ implies $z_s(u) \in \cH_s$ for $s \le t$.

In the case that $\kappa_t \ge 0$, we have the integral bound
\begin{equation*}
    \int_{-(\log N)^{-1}}^{0} \frac{|x|^{1/2}}{|x -\kappa_t|} \rd x = \sqrt{\kappa_t} \int_{-\kappa_t/(\log N)}^{0} \frac{|x|^{1/2}}{|x-1|}\rd x    
\end{equation*}    
The integral in the above equation is bounded by a constant since $\kappa_t/(\log N) $ will be less than 1. This gives us the bound of $\OO(\sqrt{\kappa_t})$ which will be less than $\OO(\log N^2).$

}

 The last line of \eqref{e:sqrdiffeq} comes from evaluating the integral at real numbers and extending by analytic continuation.

We have that
\begin{equation}\label{e:term1}
2\partial_q\sqrt{\kappa_q + i \eta_q}=- \pi C_q  + O\left((\log N)^2 \sqrt{|\kappa_q + \ri \eta_q|}\right) 
\end{equation}
We notice that the righthand is of order $\OO(1)$. By directly integrating both sides of the above equation, we get
\begin{align}\label{e:term2ab}
    \sqrt{|\kappa_q + \ri \eta_q|}=\OO\left(|t-q|+\sqrt{|\kappa_t+\ri\eta_t|}\right).
\end{align}
We can plug \eqref{e:term2ab} into \eqref{e:term1}, and integrate both side from $q=s$ to $q=t$, to get
\begin{align}\begin{split}
    &\phantom{{}={}}\sqrt{\kappa_t + \ri \eta_t}  - \sqrt{\kappa_s+ \ri \eta_s} + \pi/2 \int_s^t C_q \rd q \\
    &=\OO\left((\log N)^2\int_{s}^t |t-q|+\sqrt{|\kappa_t+\ri \eta_t|}\rd q\right)\\
    &=\OO\left((\log N)^2\left((t-s)\sqrt{|\kappa_t + \ri\eta_t|}+ (t-s)^2\right)\right).
\end{split}\end{align}
This finishes the proof of Lemma \ref{l:sqrtgrowth}.

\end{proof}

We can use the above expression when trying to compute the quadratic variance integrals
\begin{lemma} \label{Lem:VarEval}
 Let $z_t$ and $z_t'$ be two points in the region $H_t$. Let $z_s = (\kappa_s + E_s) + i \eta_s$ and $z_s'= (\kappa_s'+ E_s) + i \eta_s'$ represent the two characteristics that terminate at $z_t$ and $z_t'$ respectively at time $t$. We assume here that $\eta_t < \eta_t'$. Consider two times $s$ and $t$ with $\sqrt{\eta^*}\ll s,t \ll (\log N)^{-3}$ and $(s-t)^2 \ll \min\{\sqrt{|\kappa_t + i \eta_t|},\sqrt{|\kappa_t' + i \eta_t'|} \}$ . 

We have  that 
\begin{align}\begin{split}\label{e:integralestimate}
&\phantom{{}={}}\int_{s}^{t}\frac{\sqrt{\kappa_q + i \eta_q}\sqrt{\kappa_q' + i \eta_q'}}{\sqrt{\kappa_t + i \eta_t}\sqrt{\kappa_t' + i \eta_t'}}\int_{-\infty}^{\infty} \frac{\hat{\rho}_q(x)}{(x -z_q)^2(x-z_q')^2} \rd x \rd q \\
&= 
\frac{1}{4 \sqrt{\kappa_t + i \eta_t}\sqrt{\kappa_t' + i \eta_t'} (\sqrt{\kappa_t + i \eta_t} + \sqrt{\kappa_t' + i \eta_t'})^2}[1+ \OO(N^{-\tilde \epsilon})] \\
&- \frac{1}{4 \sqrt{\kappa_t + i \eta_t}\sqrt{\kappa_t' + i \eta_t'} (\sqrt{\kappa_s + i \eta_s} + \sqrt{\kappa_s' + i \eta_s'})^2}[1+ \OO(N^{-\tilde \epsilon})]
\end{split}\end{align}
\end{lemma}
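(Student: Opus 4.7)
The plan is to reduce the time integral to an elementary antiderivative by substituting the explicit evaluation of the spatial integral established in \eqref{e:term} and then exploiting Lemma \ref{l:sqrtgrowth} to recognize the integrand as a total derivative in $q$. I would first substitute \eqref{e:term} into the left-hand side of \eqref{e:integralestimate}. The factor $\sqrt{\kappa_q+\ri\eta_q}\sqrt{\kappa_q'+\ri\eta_q'}$ coming from the characteristic flow cancels the matching denominator in \eqref{e:term}, leaving an integrand of the form
\begin{equation*}
\frac{\pi C_q/2}{\sqrt{\kappa_t+\ri\eta_t}\sqrt{\kappa_t'+\ri\eta_t'}\,(a_q+a_q')^{3}} \bigl(1+\OO(N^{-\tilde\epsilon})\bigr),
\end{equation*}
where I abbreviate $a_q := \sqrt{\kappa_q+\ri\eta_q}$ and $a_q' := \sqrt{\kappa_q'+\ri\eta_q'}$.

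Next, I would exploit the fact that the constant $C_q$ depends only on the measure $\hat\rho_q$ and not on which characteristic is used to generate it. Differentiating the identity of Lemma \ref{l:sqrtgrowth} in $t$ yields
\begin{equation*}
\partial_q a_q \;=\; -\frac{\pi C_q}{2} + \OO\!\bigl((\log N)^{2}\sqrt{|\kappa_q+\ri\eta_q|}\bigr),
\end{equation*}
and the same relation with $a_q'$. Averaging the two gives $\pi C_q = -\partial_q(a_q+a_q')$ up to the analogous error. Substituting this, the main contribution to the integrand becomes an exact total derivative:
\begin{equation*}
\frac{-\partial_q(a_q+a_q')/2}{\sqrt{\kappa_t+\ri\eta_t}\sqrt{\kappa_t'+\ri\eta_t'}\,(a_q+a_q')^{3}} \;=\; \frac{1}{\sqrt{\kappa_t+\ri\eta_t}\sqrt{\kappa_t'+\ri\eta_t'}}\,\partial_q\!\left(\frac{1}{4(a_q+a_q')^{2}}\right),
\end{equation*}
and integration from $s$ to $t$ produces precisely the difference on the right-hand side of \eqref{e:integralestimate}.

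The main obstacle will be showing that every error term introduced along the way contributes only a factor $1+\OO(N^{-\tilde\epsilon})$ relative to the main term. The multiplicative $\OO(N^{-\tilde\epsilon})$ from \eqref{e:term} is harmless; the delicate point is the additive residual $\OO((\log N)^{2}\sqrt{|\kappa_q+\ri\eta_q|})$ from Lemma \ref{l:sqrtgrowth}, which after the substitution for $\pi C_q$ produces an extra integral of shape $\int_{s}^{t} \sqrt{|\kappa_q+\ri\eta_q|}/((a_q+a_q')^{3}\sqrt{\kappa_t+\ri\eta_t}\sqrt{\kappa_t'+\ri\eta_t'})\,\rd q$. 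Using the growth $a_q = a_t + (\pi/2)\int_{q}^{t} C_r\,\rd r + O((t-q)^2 (\log N)^2)$ from Lemma \ref{l:sqrtgrowth} to bound $(a_q+a_q')^{-3}$ uniformly by $(a_t+a_t')^{-3}$ on the range of integration, together with the hypothesis $(t-s)^{2}\ll \min\{\sqrt{|\kappa_t+\ri\eta_t|},\sqrt{|\kappa_t'+\ri\eta_t'|}\}$, the residual integral is comparable to $(\log N)^{2}(t-s)/((a_t+a_t')^{2}\sqrt{\kappa_t+\ri\eta_t}\sqrt{\kappa_t'+\ri\eta_t'})$, which is smaller than the main term by a factor $N^{-\tilde\epsilon}$ after choosing $\tilde\epsilon$ slightly smaller than $\epsilon$. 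This justifies folding the error into the stated $\OO(N^{-\tilde\epsilon})$ and completes the estimate.
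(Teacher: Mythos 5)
Your proposal is correct and is essentially the paper's own proof. The paper substitutes the explicit spatial integral \eqref{e:term}, invokes Lemma \ref{l:sqrtgrowth} to replace $a_q + a_q'$ by $A + A' - 2B_q$ with $A = \sqrt{\kappa_s + \ri\eta_s}$, $A' = \sqrt{\kappa_s' + \ri\eta_s'}$, $B_q = (\pi/2)\int_s^q C_r \rd r$, and then performs the change of variable $q \mapsto B_q$ to obtain an elementary antiderivative; you observe directly that the integrand is (up to the stated error) the total $q$-derivative of $1/(4(a_q+a_q')^2)$, which is the same computation expressed without the substitution. One small verbal slip: "averaging" $\del_q a_q = -\pi C_q/2$ and $\del_q a_q' = -\pi C_q/2$ gives $-\pi C_q/2$, not $-\pi C_q$; you mean summing, and the displayed identity is in fact correct. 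Your error discussion also makes more explicit than the paper does how the hypotheses $(t-s)^2 \ll \min\{\sqrt{|\kappa_t+\ri\eta_t|}, \sqrt{|\kappa_t'+\ri\eta_t'|}\}$ and $t \ll (\log N)^{-3}$ absorb the additive residual from Lemma \ref{l:sqrtgrowth} into the $\OO(N^{-\tilde\epsilon})$ factor.
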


\begin{proof}

We want to compute
\begin{align}
\int_{s}^{t} \pi/2 \frac{C_q}{(\sqrt{(\kappa_q + i \eta_q)} + \sqrt{(\kappa_q' + i \eta_q')})^3} \rd q
\end{align}
which up to a factor of $(1+ \OO(N^{-\tilde \epsilon}))$ is letting $A= \sqrt{\kappa_s + \ri \eta_s}$ , $A' = \sqrt{\kappa_s' + \ri \eta_s'}$ and $B_t =  \pi/2 \int_{s}^{t} C_s \rd s$

\begin{align}
\int_{s}^{t} \pi/2 \frac{C_q}{(A+A'- 2 B_q)^3} \rd q   
\end{align}
We change variable from $q \rightarrow B_q$ the Jacobian of this transform is clearly $\pi/2 C_q$
\begin{equation}
\int_{B_s=0}^{B_t} \frac{1}{(A+A' -2 x)^3} \rd x 
\end{equation}
The above equation is up to a factor of $O(1+ N^{-\tilde \epsilon})$ is
\begin{equation}
    \frac{1}{4(\sqrt{\kappa_t + i \eta_t} + \sqrt{\kappa_t' + i \eta_t'})^2}
\end{equation}
\end{proof}
We remark here that if we shoose $t-s\gg \max \{\sqrt{|\kappa_t + \eta_t|}, \sqrt{|\kappa_t' + i \eta_t'|}\}$, then the last line of \eqref{e:integralestimate} can be subsumed as a $O(N^{- \tilde \epsilon})$ error of the second term.

\begin{proof}[Proof of Lemma 5.7]
 Through calculations similar to what we have done earlier, we can compute the value of $\mathcal{I}_t (\mathcal{I}_s)^{-1}$ when we have $(s-t)^2(\log N)^2 \ll \sqrt{\kappa_t + i \eta_t}^{1+\tilde \epsilon}$. 
\begin{align}\begin{split}
    \int_{s}^t \partial_z \hat{m}_s(z_s) \rd s &= \int_s^t\int_{0}^{\infty} \frac{C_q \sqrt{x}}{(x+ \kappa_q + i \eta_q)^2} \rd x+\OO((\log N)^2)  \rd t \\
    &= \int_{s}^{t} \frac{C_q \pi}{2 \sqrt{\kappa_q + i \eta_q}} \rd q +\OO((t-s) \log N^{1/2})\\
    &= \ln\left(\frac{\sqrt{\kappa_s + i \eta_s}}{\sqrt{\kappa_t + i \eta_t}} \right) +\OO(N^{-\tilde \epsilon})
\end{split}\end{align}
{ To get from the first line to the second line, we bounded the integral of $\int_{-(\log N)^{-1}}^{0}  \frac{x^{3/2}}{|x - \eta_q + i \eta_q|^2} \rd x$ by $\OO((\log N)^2)$. As before, the only interesting case is when $-\kappa_q \ge \eta_q$. The integral can be divided and bounded up to a constant independent of $N$ as
\begin{equation}
    \int_{-(\log N)^{-1}}^{2 \kappa_q} |x|^{-1/2} \rd x + \int_{2 \kappa_q}^{0} \frac{|x|^{3/2}}{|x-\kappa_q -i \eta_q|^2} \rd x \le (|\kappa_q|)^{1/2} + \frac{|\kappa_q|^{5/2}}{\eta_q^2}
\end{equation}
The bound $\frac{|\kappa_q|^{5/2}}{\eta_q^2} \le (\log N)^2$ is true because of the fact that terms in $\cH_t$ satisfy $|\kappa_q| \le \eta_q^{4/5 + \epsilon}$, the other inequality is true since $|\kappa_q| \ll (\log N)^2$.

The final line requires the following comparison estimate
\begin{align}
    \int_{s}^{t} \frac{C_q \pi/2}{\sqrt{\kappa_q + i \eta_q}} \rd q  - \int_{s}^{t} \frac{C_q \pi/2}{ (\sqrt{\kappa_s + i \eta_s} -\pi/2 \int_s^{q} C_l \rd l)} \rd q \le \int_{s}^{t} \frac{C_q \pi/2}{ (\sqrt{\kappa_s + i \eta_s} -\pi/2 \int_s^{q} C_l \rd l)^{1- \tilde \epsilon}} \rd q
\end{align}
where we used in the last line that for times $s<t$ as in the condition of Lemma \ref{lem:IntFac}, the difference in the denominator of the first integral and the second integral in the left hand side of the above equation is $(\sqrt{\kappa_q + i \eta_q})^{1+ \tilde \epsilon}$. This gives the $1- \epsilon$ in the denominator on the left hand side. We can integrate the right hand side to get that this is bounded by $O(|\sqrt{\kappa_s + i\eta_s}|^{\tilde \epsilon})$. Clearly, this can be bounded by $O(N^{-\tilde \epsilon})$ . This shows us that we have an additive error term of the order $O(N^{-\epsilon})$. Taking exponentials will give us a multiplicative error term as in \ref{lem:IntFac}}

\end{proof}

\section{Edge Universality}

We recall the $\beta$-Dyson Brownian motion from \eqref{DBM}
\beq 
\rd \la_i(t) = \sqrt{\frac{2}{\beta N}} \rd B_i(t) +\frac{1}{N}\sum_{j:j\neq i}\frac{\rd t}{\lambda_i(t)-\lambda_j(t)}-\frac{1}{2}V'(\lambda_i(t))\rd t,\quad i=1,2,\cdots, N,
\eeq
where the initial data $\bmla(0)=(\la_1,\la_2,\cdots,\la_N)$ satisfies assumption \ref{a:initial}, and the potential $V$ satisfies \ref{a:asumpV}.

We have shown in previous sections that after applying the $\beta-$DBM with potential $V$ on $\bmla(0)$ would create a distribution that satisfies edge rigidity at an optimal scale after certain amount of time. Upon further applying the $\beta-$DBM with potential $V$ , we can compare the local eigenvalue fluctuations to that of the $\beta$-ensemble with a quadratic potential.  The main strategy is to perform a coupling of the $\beta$-DBM process with that of the $\beta$-DBM process with quadratic potential from its equilibrium measure. We will, as before, estimate the differences in the coupling via a continuous interpolation. A similar analysis has been performed in \cite{Landon2016}

 Without loss of generality, in the rest of this section we assume that the initial data $\bmla(0)$ satisfies the optimal rigidity. Otherwise, we can first apply the $\beta$-DBM  until we have the edge rigidity at an optimal scale.

We now define $\mu_i$ as the unique strong solution to the SDE,
\beq
\rd \mu_i(t) = \sqrt{\frac{2}{\beta N}} \rd B_i(t) +\frac{1}{N}\sum_{j:j\neq i}\frac{\rd t}{\mu_i(t)-\mu_j(t)}-\frac{1}{2}W'(\mu_i(t))\rd t,\quad i=1,2,\cdots, N,
\eeq
with initial data $\mu_i (0)$ being distributed like a $\beta$-ensemble 
\begin{align}\label{e:beta2}
    \frac{1}{Z_N}\prod_{i<j}|\mu_i-\mu_j|^\beta e^{-N\sum_{i}W(\mu_i)}\prod_{i}\rd \mu_i,
\end{align}
where the potential $W$ is quadratic, and the equilibrium measure behaves like
\begin{align}
    C_0 \sqrt{E_0-x},
\end{align}
as $x\rightarrow E_0$.

The main result of this section is the following.
\begin{theorem} \label{thm:maindbm}
 Let $t_1= \OO(\frac{N^{\omega_1}}{N^{1/3}})$ .  With overwhelming probability, we have
\beq
| (\lambda_{ i} (t_1) - E_{t_1} ) - ( \mu_i - E_0 ) | \leq \frac{1}{N^{2/3+\varepsilon} }
\eeq
for a small $\varepsilon>0$ and for any finite $1 \leq i \leq K$.
\end{theorem}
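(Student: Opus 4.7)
The plan is to adapt the coupling-plus-short-range argument of \cite{Landon2016,LandonEdge} to the edge and to the presence of a non-quadratic potential $V$, using the optimal edge rigidity of Theorem \ref{Thm:EdgeRidgity} and the mesoscopic central limit theorem of Theorem \ref{t:mesoCLT} as the key new inputs. First I would drive $\lambda_i(t)$ and $\mu_i(t)$ with the same Brownian motions $\{B_i(t)\}$, and introduce the centered differences $\delta_i(t) := (\lambda_i(t) - E_t) - (\mu_i(t) - E_0)$. Applying It\^o's formula and cancelling the noise, the vector $\delta(t)$ obeys a linear discrete parabolic equation
\begin{equation*}
\partial_t \delta_i(t) = \sum_{j\ne i} a_{ij}(t)\,(\delta_j(t) - \delta_i(t)) + \mathcal F_i(t), \qquad a_{ij}(t) := \frac{1}{N(\lambda_i(t)-\lambda_j(t))(\mu_i(t)-\mu_j(t))},
\end{equation*}
where the forcing $\mathcal F_i(t)$ collects the edge velocity $-\partial_t E_t$ (the $\mu$-centering is constant because the $W$-equilibrium is stationary) and the potential discrepancy $-\tfrac12(V'(\lambda_i(t)) - W'(\mu_i(t)))$.

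The next step is a short-range/long-range splitting of the kernel $a_{ij}$ at a length scale $N^\ell$ with $\ell$ slightly larger than $\omega_1$. The short-range piece defines a discrete parabolic operator whose coefficients are, thanks to the edge rigidity of Corollary \ref{Col:GrenEdg} applied jointly to both processes, comparable to those governing edge universality at quadratic potential. H\"older regularity of its fundamental solution on the optimal edge scale $N^{-2/3}$ then follows from the discrete De~Giorgi--Nash--Moser theorem of \cite{MR3372074} in the edge form used in \cite{LandonEdge}. Iterating this H\"older bound from the initial time up to $t_1 = N^{\omega_1-1/3}$ reduces the theorem to showing that the long-range portion of the kernel, together with the forcing $\mathcal F_i$, contributes at most $N^{-2/3-\varepsilon}$ pointwise for bounded indices $i\leq K$.

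The main obstacle is precisely this last point: the potential discrepancy $V'(\lambda_i) - W'(\mu_i)$ does not vanish because $V$ and $W$ differ globally, and a priori it is of order one. The observation that rescues the argument is that, at the edge, the bulk contribution of the long-range kernel reorganises (via the fixed-point relation for the Stieltjes transform and the edge equation \eqref{e:edgeEqn}) into exactly $-\partial_t E_t$, which then cancels against the corresponding term in $\mathcal F_i$. What remains of the forcing is a mesoscopic linear statistic of $\rho_t - \hat\rho_t$ against a smooth test function concentrated near the edge; by Theorem \ref{t:mesoCLT} together with the rigidity estimate \eqref{e:diffmm}, this fluctuation is of size $N^{-1+o(1)} \ll N^{-2/3-\varepsilon}$. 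Combined with the matching of the square-root constants $C_t = C_0 + O(t)$ from Proposition \ref{prop:TimStab}, which guarantees that the classical edge locations of $\widetilde\lambda_\cdot$ and $\widetilde\mu_\cdot$ agree to the required precision, this closes the estimate and yields the claimed bound.
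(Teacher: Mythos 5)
Your approach is genuinely different from the paper's, and it has a real gap at exactly the point you flag as the ``main obstacle.''

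You couple $\lambda_i$ and $\mu_i$ directly (same Brownian motions) and try to bound $\delta_i = (\lambda_i - E_t) - (\mu_i - E_0)$. The forcing $\mathcal F_i = -\tfrac12\bigl(V'(\lambda_i) - W'(\mu_i)\bigr) - \partial_t E_t$ in the $\delta$-equation is genuinely $O(1)$ near the edge, because the two potentials and the two edge locations differ globally. For the argument to close, this $O(1)$ forcing must cancel against the long-range part of the kernel $\tfrac1N\sum_{j}\tfrac{\delta_j - \delta_i}{(\lambda_i-\lambda_j)(\mu_i-\mu_j)}$ acting on the \emph{bulk} values $\delta_j$, which are themselves $O(1)$ since the bulk densities of the two processes differ. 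You assert that this cancellation is a ``reorganization'' into $-\partial_t E_t$ and then invoke Theorem \ref{t:mesoCLT}, but the computation is never made. Verifying it would require understanding the bulk profile of $\delta_j$, not just edge rigidity — and the mesoscopic CLT is not the right tool, since $\mathcal F_i$ is a pointwise quantity rather than a mesoscopic linear statistic against a smooth test function; indeed the paper does not use Theorem \ref{t:mesoCLT} anywhere in the edge universality proof.

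The paper's argument, following \cite{LandonEdge}, sidesteps this issue structurally. It introduces the one-parameter family $z_i(t,\alpha)$ that interpolates both the initial data $z_i(0,\alpha)=\alpha\lambda_i(0)+(1-\alpha)\mu_i(0)$ and the potential $V_\alpha=\alpha V+(1-\alpha)W$, and studies $u_i(t,\alpha)=\partial_\alpha\bigl(\tilde z_i(t,\alpha)-E_t(\alpha)\bigr)$ for a short-range approximation $\tilde z_i$. Crucially, the short-range approximation \eqref{e:short} for $i\le N^{\omega_A}$ replaces the long-range interaction and the mean-field terms by $\alpha$-independent reference quantities from the $\alpha=0$ measure, so that $\mathcal E_i = 0$ exactly near the edge. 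The nonzero forcing $|\mathcal E_i|\le N^C$ lives only at large indices and is confined by finite speed of propagation; the energy estimate then gives $|u_i|=O(N^\varepsilon/(N^{1/3}t))$ for small $i$, and integrating over $\alpha$ from $0$ to $1$ yields the theorem. The short-range comparison and the measure-interpolation estimates \eqref{e:interrig}--\eqref{e:classloc} are what make the replacement by $\alpha=0$ data legitimate. Without this built-in cancellation, the direct coupling leaves you with an $O(1)$ forcing that you would have to cancel by hand against bulk contributions — a step that is neither easy nor accomplished in the proposal.
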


\subsection{Interpolation}
For clarity of presentation, we will write up a proof of \ref{thm:maindbm} in the case that we have a local law and rigidity along the entire spectrum. In the case that there is less control of the egienvalues above some scale $i^* \asymp N$, one can perform minor modifications to the construction of the interpolating process above the scale $i^*$ like in \cite{LandonEdge} section 3.1.

We define the following interpolating processes for $0 \leq \alpha \leq 1$.
\beq \label{e:defz}
\rd z_i (t,  \alpha ) = \sqrt{\frac{2}{\beta N}} \rd B_i(t) +\frac{1}{N}\sum_{j:j\neq i}\frac{\rd t}{z_i (t,  \alpha )-z_j (t,  \alpha )}-\frac{1}{2}V_\alpha'(z_i (t,  \alpha ))\rd t,\quad i=1,2,\cdots, N,
\eeq
with the potential
\begin{align}
    V_\alpha=\alpha V +(1-\alpha)W,
\end{align}
and the initial data
\beq
z_i (0, \alpha ):= \alpha \lambda_i (0) + (1 - \alpha ) \mu_i (0),
\eeq
for $i=1,2,\cdots,N$.

We define the Stieltjes transform of the empirical particle process $z_i(t,\alpha)$ as defined in \eqref{e:defz}
\beq
m_t (z, \alpha ) = \frac{1}{N} \sum_i \frac{1}{ z_i (t, \alpha ) - z }.
\eeq

We recall that $\hat{\rho}_{ t}$ is the solution of the McKean-Vlasov equation with initial data given by the Stieltjes transform $\hat m_0(z)$.  The edge of $\hat \rho_{t}$ will be designated by $E_t$. 

We recall that by our Assumption \ref{a:initial}, we have 
\begin{align}
    \hat\rho_0(x)=f_0(x,1)\sqrt{E_0-x}\eqd\hat \rho_0(x,1),
\end{align}
where $f_0(x,1)$ is analytic in a neighborhood of $x=E_0$. The equilibrium measure of the $\beta$-ensemble \eqref{e:beta2}, has the form
\begin{align}
    \hat\rho_0(x,0)=f_0(x,0)\sqrt{E_0-x}.
\end{align}
It turns out the empirical distribution of the interpolated initial data $z_i(z,\alpha)$ is close to the profile,
\begin{align}
    \hat\rho_0(x,\alpha)=f_0(x,\alpha)\sqrt{E_0-x}.
\end{align}
Let 
\begin{align}
    \hat F(y,\alpha)=\int_y^\infty \hat\rho_0(x,\alpha)\rd x,
\end{align}
the profiles $\hat \rho_0(x,\alpha)$ for $0<\alpha<1$ are determined by
\begin{align}
    F^{-1}(y,\alpha)=\alpha F^{-1}(y,1)+(1-\alpha)F^{-1}(y,0).
\end{align}

\begin{proposition}
{Under the assumptions of \ref{a:initial} on initial data to optimal scale $\eta^* = N^{-1/3}$, we have
\begin{align}
    \frac{1}{N}\sum_{i=1}^N \delta_{z_i(0,\alpha)}\sim \hat \rho_0(x,\alpha)\sim C_0\sqrt{E_0-x}.
\end{align}
where the $\sim$ relation holds in the following sense:
\begin{equation}
    |\lambda_i(0,\alpha) - \gamma_i(0,\alpha)| < \frac{N^{\epsilon}}{N^{2/3} i^{1/3}}
\end{equation}
and $\gamma_i(0,\alpha)$ is the value of $\hat F^{-1}(i/n, \alpha)$.
This rigidity of the eigenvalues would imply that the point process $\frac{1}{N}\sum_{i=1}^N \delta_{\lambda_i(0,\alpha)} $ would be close to the stable measure $\hat{\rho}_0(x,\alpha)$ in the sense of Assumption \ref{a:initial} to optimal scale.
}

\end{proposition}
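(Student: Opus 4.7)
The plan is to exploit the key algebraic fact that both the random sample $z_i(0,\alpha)=\alpha\lambda_i(0)+(1-\alpha)\mu_i(0)$ and the classical location $\gamma_i(0,\alpha)=\hat F^{-1}(i/N,\alpha)$ are \emph{linear in} $\alpha$: the former by the definition \eqref{e:defz} of the interpolated process, the latter by the interpolation rule $\hat F^{-1}(y,\alpha)=\alpha\hat F^{-1}(y,1)+(1-\alpha)\hat F^{-1}(y,0)$ stated just before the proposition. Consequently the triangle inequality yields
\begin{align*}
|z_i(0,\alpha)-\gamma_i(0,\alpha)|\le\alpha|\lambda_i(0)-\gamma_i(0,1)|+(1-\alpha)|\mu_i(0)-\gamma_i(0,0)|,
\end{align*}
so the proof reduces entirely to establishing optimal quantile-level edge rigidity at the two endpoints $\alpha=0$ and $\alpha=1$.

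First I would dispose of the endpoint $\alpha=1$. The hypothesis of Assumption \ref{a:initial} at optimal scale $\eta^{*}=N^{-1/3}$ gives the Stieltjes transform bounds $|m_0(z)-\hat m_0(z,1)|\le M/(N\eta)$ on $\cD_0^{\rm in}$ together with the companion estimates on $\cD_0^{\rm out}$ and $\cD_0^{\rm far}$; combined with the square-root behavior of $\hat\rho_0(\cdot,1)$ near $E_0$, this converts via a standard Helffer--Sj\"ostrand argument applied to a smoothed indicator of $(-\infty,\gamma_i(0,1)]$ into the quantile rigidity $|\lambda_i(0)-\gamma_i(0,1)|\le N^{\varepsilon}/(N^{2/3}i^{1/3})$ for $1\le i\le\fe N$. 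For the endpoint $\alpha=0$ the $\mu_i(0)$ form a quadratic $\beta$-ensemble, and the corresponding optimal edge rigidity $|\mu_i(0)-\gamma_i(0,0)|\le N^{\varepsilon}/(N^{2/3}i^{1/3})$ is a classical input from \cite{MR3253704, MR3161313}. Plugging both into the triangle inequality gives the desired pointwise bound for every $\alpha\in[0,1]$ with overwhelming probability.

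To upgrade this rigidity of the particles $z_i(0,\alpha)$ to the full Assumption \ref{a:initial} for the interpolating measure, two further checks remain. (i) The profile $\hat\rho_0(\cdot,\alpha)$ must itself have square-root behavior in the sense of Definition \ref{d:stable}: this follows because on a neighborhood of $y=0$ both $\hat F^{-1}(y,0)$ and $\hat F^{-1}(y,1)$ admit convergent expansions in $y^{2/3}$, a direct consequence of the analyticity of the $S(x)$ factors in the respective densities, and linear combination followed by inversion preserves this structure, giving $\hat\rho_0(x,\alpha)=f_0(x,\alpha)\sqrt{E_0-x}$ with $f_0(\cdot,\alpha)$ analytic near $E_0$. (ii) One converts the quantile rigidity into the three Stieltjes transform bounds by the standard representation $m_0(z,\alpha)-\hat m_0(z,\alpha)=\int(x-z)^{-2}\bigl(N_{\rm emp}(x)-N\hat F(x,\alpha)\bigr)dx$, where $N_{\rm emp}$ is the empirical counting function of $\{z_i(0,\alpha)\}$; the rigidity bound of order $N^{\varepsilon}/(N^{2/3}i^{1/3})$ together with the square-root density then produces the $M/(N\eta)$ bound on $\cD_0^{\rm in}$, the improved $M^{-1}/(N\eta)$ bound on $\cD_0^{\rm out}$ (with room to spare, using $\eta\gg\eta^{*}$ there), and the $M/N$ bound on $\cD_0^{\rm far}$.

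The main obstacle is not the interpolation argument itself, which is purely algebraic, but the invocation of \emph{optimal} edge rigidity for the quadratic $\beta$-ensemble down to indices $i=O(1)$: this is the non-trivial black box we borrow from the $\beta$-ensemble literature. A secondary technical point is the verification in item (i) that the two endpoint equilibria can be matched at a \emph{common} right edge $E_0$ with the \emph{same} leading square-root coefficient $C_0$ displayed in the proposition; this is a compatibility condition imposed on the choice of the quadratic potential $W$, and it is precisely what allows the linear map $\alpha\mapsto\hat F^{-1}(y,\alpha)$ to preserve the square-root edge uniformly in $\alpha$.
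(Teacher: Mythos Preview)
Your proposal is correct and follows essentially the same approach as the paper: both exploit the linearity of $z_i(0,\alpha)$ and $\gamma_i(0,\alpha)=\hat F^{-1}(i/N,\alpha)$ in $\alpha$, reduce via the triangle inequality to rigidity at the endpoints $\alpha=0,1$, and then invoke optimal rigidity for the $\beta$-ensemble at $\alpha=0$ and the hypothesis of Assumption~\ref{a:initial} at $\alpha=1$. Your write-up is in fact more detailed than the paper's, which records only the linearity observation; the square-root behavior of $\hat\rho_0(\cdot,\alpha)$ that you address in item (i) is what the paper proves separately in Proposition~\ref{Prop:MeasureInterp}.
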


One should note that the above lemma is quite simply a consequence of the definition of the $F$ inverse transform if deciphered correctly. The value of $\gamma_i(0,\alpha)$ should be the linearly interpolated value between the endpoints at $\alpha=0$ and 1. Since this optimal rigidity holds at these regions, we have it at $\alpha$.

{ \begin{proposition} \label{Prop:MeasureInterp}
Consider two measures $\rho_0(x)= f_0(x) \sqrt{x-E_0}$ and $\rho_1(x) =f_1(x)\sqrt{x-E_1}$ where $f_0(x)$ and $f_1(x)$ are analytic functions around their respective edges $E_0$ and $E_1$. The measure $\rho_{\alpha}$ whose eigenvalue counting function $F(y,\alpha)$ is determined by the relation 
\begin{equation*}
    F^{-1}(y,\alpha) = \alpha F^{-1}(y,1) + (1-\alpha)F^{-1}(y,0)
\end{equation*}
where $F(y,1)$ is the eigenvalue counting function of $\rho_1(x)$ and $F(y,0)$ is the eigenvalue counting function of $F(y,0)$ is of the following form:

\begin{equation*}
    \rho_{\alpha}(x) = f_{\alpha}(x) \sqrt{E_{\alpha} -x }
\end{equation*}
where $f_{\alpha}(x)$ is analytic around the new edge $E_{\alpha}= \alpha E_0 + (1-\alpha) E_1$
\end{proposition}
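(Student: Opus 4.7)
The plan is to exploit the $3/2$-power behavior of the cumulative distribution function near the edge (which is a direct consequence of the square-root behavior of the density) to obtain a clean local expansion for $F^{-1}(\cdot,j)$, observe that this form is preserved under linear interpolation in $\alpha$, and then invert back to recover $\rho_\alpha$. Throughout I will read the hypothesis as $\rho_j(x) = f_j(x)\sqrt{E_j - x}$ supported on $(-\infty, E_j]$ (consistent with Definition \ref{d:stable} and with the form claimed for $\rho_\alpha$).

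First I would establish the expansion
\begin{equation*}
F(y,j) \;=\; \int_y^{E_j} f_j(x)\sqrt{E_j - x}\,\rd x \;=\; w^{3/2}\,G_j(w), \qquad w := E_j - y,
\end{equation*}
where $G_j$ is analytic on a disk around $0$ with $G_j(0) = \tfrac{2}{3}f_j(E_j) > 0$; this follows by integrating the convergent Taylor series of $f_j$ term by term. Setting $u = F(y,j)$ and raising to the $2/3$ power gives $u^{2/3} = w\,G_j(w)^{2/3}$. Since $G_j(0)>0$, the right-hand side is analytic in $w$ with non-vanishing derivative at $w=0$, so the analytic implicit function theorem yields
\begin{equation*}
F^{-1}(u,j) \;=\; E_j \;-\; u^{2/3}\,h_j(u^{2/3}),
\end{equation*}
with $h_j$ analytic near $0$ and $h_j(0) = \bigl(3/(2 f_j(E_j))\bigr)^{2/3} > 0$.

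Next I would take the linear combination defining $F^{-1}(u,\alpha)$ to obtain
\begin{equation*}
F^{-1}(u,\alpha) \;=\; E_\alpha \;-\; u^{2/3}\,h_\alpha(u^{2/3}), \qquad E_\alpha := \alpha E_1 + (1-\alpha) E_0,
\end{equation*}
where $h_\alpha = \alpha h_1 + (1-\alpha)h_0$ is analytic near $0$ and $h_\alpha(0) = \alpha h_1(0)+(1-\alpha)h_0(0) > 0$. I then invert once more: writing $s = u^{2/3}$, the relation $E_\alpha - y = s\,h_\alpha(s)$ has an analytic solution $s = \phi_\alpha(E_\alpha - y)$ with $\phi_\alpha(0)=0$ and $\phi_\alpha'(0) = 1/h_\alpha(0) > 0$. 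Hence $F(y,\alpha) = \phi_\alpha(E_\alpha - y)^{3/2}$. Factoring $\phi_\alpha(w) = w\,\psi_\alpha(w)$ with $\psi_\alpha$ analytic and $\psi_\alpha(0) = \phi_\alpha'(0) > 0$, differentiation gives
\begin{equation*}
\rho_\alpha(y) \;=\; -\partial_y F(y,\alpha) \;=\; \tfrac{3}{2}\,\phi_\alpha'(E_\alpha - y)\,\sqrt{\psi_\alpha(E_\alpha - y)}\,\sqrt{E_\alpha - y},
\end{equation*}
and the prefactor $f_\alpha(y) := \tfrac{3}{2}\phi_\alpha'(E_\alpha - y)\sqrt{\psi_\alpha(E_\alpha - y)}$ is analytic in a neighborhood of $E_\alpha$ (since $\psi_\alpha(0)>0$ makes the square root analytic), with $f_\alpha(E_\alpha) > 0$. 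This is exactly the desired form.

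The only point requiring care is keeping the domain of analyticity uniform: one needs a common disk on which both $h_0$ and $h_1$ (and subsequently $\phi_\alpha$) are analytic. This is free from the assumption that $f_0, f_1$ are analytic on neighborhoods of $E_0, E_1$, and because $h_\alpha(0) > 0$ for every $\alpha \in [0,1]$, the analytic implicit function theorem delivers a neighborhood of $0$ on which $\phi_\alpha$ is analytic; this is the main (and essentially the only) technical obstacle, but it is standard. No cancellation or delicate estimate is needed — the entire proof is a bookkeeping exercise in composition of analytic germs around the edge.
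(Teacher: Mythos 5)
Your proof is correct and follows essentially the same route as the paper's: expand the cumulative function $F(\cdot,j)$ near the edge to exhibit it as $(E_j-y)^{3/2}$ times an analytic factor, invert to write $F^{-1}(u,j)=E_j-u^{2/3}h_j(u^{2/3})$ with $h_j$ analytic, observe that the linear interpolation in $\alpha$ preserves exactly this normal form with $E_\alpha$ the interpolated edge, and invert once more to recover a density of the form (analytic)$\times\sqrt{E_\alpha-y}$. The only real difference is presentational: you invoke the analytic inverse/implicit function theorem where the paper works with explicit fractional power series and Lagrange inversion to control coefficients, which is a slightly less tidy way of justifying the same analytic inversions.
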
}
\begin{proof}
This statement can be proved by expanding power series.
First expand $f_0= a_0 + a_1(E_0-z) + a_2(E_0-z)^2\cdots$ and $f_1= b_0 + b_1(z-E_1) + b_2(z-E_1)^2+\cdots$

Then we can write the function $\hat{F}(y,0)= \int_{y}^{E_0} [a_0 (E_0 -z_0)^{1/2} + a_1(E_0 -z_0)^{3/2} + a_2 (E_0 - z_0)^{5/2} +\cdots] \rd y$ so , by integrating, we get
\begin{equation}
    \hat{F}(z_0,0)= -2/3 a_0 (E_0 -z_0)^{3/2} -2/5 a_1 (E_0-z_0)^{5/2} -2/7 a_2 (E_0 - z_0)^{7/2} -\cdots
\end{equation}
Taking the inverse of this expression will give us fractional powers with lowest power $y^{2/3}$. Since we are only considering $y$ positive, there is no issue in defining our expansions.

\begin{equation}
    \hat{F}^{-1}(y,0) = E_0 + \hat{a}_0 y^{2/3} + \hat{a}_1 y^{4/3} + \hat{a_2} y^{6/3} 
\end{equation}
Upon taking the map $(E_0 - z_0) \rightarrow (E_0 - z_0)^2$, we see that we are actually inverting a formal power series. We can bound the coefficients exponentially by using Lagrange's inversion formula. After this procedure, we see that if we actually replace  
$y \rightarrow y^3$ in the above formula, we see that we get an analytic power series in a small neighborhood of 0.

One can get a similar power series expansion to $\hat{F}^{-1}(y,1) = E_1 + \hat{b}_0 y^{2/3} + \hat{b}_1 y^{4/3}+\cdots$, we then obtain
\begin{equation}
    \hat{F}^{-1}(y, \alpha) = \alpha E_0 + (1- \alpha) E_1 +(\alpha \hat{a}_0 + (1- \alpha) \hat{b}_0) y^{2/3} + (\alpha \hat{a}_1 + (1- \alpha) \hat{b}_1) y^{4/3} + (\alpha \hat{a}_2 + ( 1-\alpha) \hat{b}_2) y^{6/3}+\cdots
\end{equation}
We have to invert the function $\hat{F}^{-1}(y,\alpha)$ and, again, the inverse can be written in the form
\begin{equation}
    \hat{F}(z, \alpha) = c_0(z - \alpha E_0 -(1-\alpha) E_1)^{3/2} + c_1(z -\alpha E_0 -(1- \alpha) E_1)^{5/2}+\cdots.  
\end{equation}
Again, the coefficients appearing in the above expression would be bounded exponentially in a small interval by Lagrange's inversion formula, and the above expression, if we factor out $(z - \alpha E_0 -(1- \alpha) E_1)$ would represent an analytic power series in a small neighborhood around $\alpha E_0 + (1-\alpha) E_1$ .

We can take the derivative of this to find the functional form of the measure and, thus, we see that the functional form of the measure $\rho_{\alpha}(x) = f_{\alpha}(x) \sqrt{x - E_{\alpha}}$  where 
\end{proof}

We let now $\hat\rho_t (x, \alpha )$ be the solution of the McKean-Vlasov equation of potential $V_\alpha$ with initial data $\hat\rho_0(x,\alpha)$, and denote the Stieltjes transform by $\hat m_t (z, \alpha)$. It follows from Proposition \ref{Prop:MeasureInterp} that they have a square root density with an right edge which we denote by $E_t(\alpha)$. Let $\gamma_i (t, \alpha )$ be the classical eigenvalue locations with respect to $\hat\rho_t (E, \alpha)$.   To be more precise, they are defined by
\beq
\frac{i}{N} = \int_{ \gamma_i (t, \alpha ) }^\infty \hat\rho_t (x, \alpha ) \rd x.
\eeq
 As a consequence of Proposition \ref{prop:TimStab}, we have the following proposition for the solutions of McKean-Vlasov equation in time for the interpolated measures.

\begin{proposition}
{Under the assumption \ref{a:initial} on initial data to optimal scale $\eta* = N^{-1/3}$, we have
\begin{align}
    \sum_{i=1}^N \delta_{z_i(t,\alpha)}\sim \hat \rho_t(x,\alpha)\sim C_t(\alpha)\sqrt{E_t(\alpha)-x}.
\end{align}
In the following sense:
there exists a small constant $\fe>0$ so that the following estimates hold.  We have,
\beq \label{eqn:rigzi}
\sup_{ 0 \leq \alpha \leq 1 } \sup_{ 0 \leq t \leq T } |z_i (t, \alpha ) - \gamma_i (t, \alpha ) | \leq \frac{ M }{N^{2/3} i^{1/3}}
\eeq
for $1 \leq i \leq \fe N$ with overwhelming probability.
along with a corresponding local law.

In addition, we have the following estimates regarding the change of the measure and the edge in $\alpha$ and time $t$
\begin{align}
    |C_t(\alpha)-C_t(0)|=\OO(t),\\
    |E_t(\alpha)-E_t(0)|=\OO(t).
\end{align}

As a consequence of the scaling estimates, we have the rigidity results:
for $cN^{-2\epsilon} N^{-2/3} \le E \le 0$
\begin{equation} \label{e:interrig}
    |\Re[\hat{m}_t(E+ E_t(\alpha),\alpha)- \hat{m}_t(E_t(\alpha), \alpha)] - \Re[\hat{m}_t(E+ E_t(0),0)- \hat{m}_t(E_t(0), 0)]| \le C \frac{|E|N^{\epsilon}}{N^{-1/3}} 
\end{equation}

for $0 \le E \le cN^{-2 \epsilon} N^{-2/3}$
\begin{equation}
     |\Re[\hat{m}_t(E+ E_t(\alpha),\alpha)- \hat{m}_t(E_t(\alpha), \alpha)] - \Re[\hat{m}_t(E+ E_t(0),0)- \hat{m}_t(E_t(0), 0)]| \le C |E|^{1/2}N^{\epsilon} 
\end{equation}

For eigenvalues $i$ that are on the scale $N^{\omega_A} \ll N$, we would have the following estimates on the classical locations on the eigenvalues.
\begin{equation} \label{e:classloc}
    |(\gamma_i(t,\alpha) - E_t(\alpha)) - (\gamma_i(t,0) - E_t(0))| \le N^{\epsilon}\frac{i^{2/3}}{N^{2/3}} 
\end{equation}

}

\end{proposition}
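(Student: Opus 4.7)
The plan is to view $\{z_i(t,\alpha)\}_{i=1}^N$ as a $\beta$-Dyson Brownian motion with analytic potential $V_\alpha = \alpha V + (1-\alpha)W$ and apply the edge-rigidity machinery (Theorem \ref{Thm:EdgeRidgity} and Corollary \ref{Col:GrenEdg}) for each fixed $\alpha \in [0,1]$, with reference profile $\hat\rho_t(\cdot,\alpha)$. The first step is to check that $\{z_i(0,\alpha)\}$ satisfies Assumption \ref{a:initial} with respect to $\hat\rho_0(\cdot,\alpha)$ at the optimal scale $\eta^* \asymp N^{-1/3}$. The preceding proposition supplies the particle rigidity $|z_i(0,\alpha) - \gamma_i(0,\alpha)| \le N^\epsilon/(N^{2/3}i^{1/3})$; Proposition \ref{Prop:MeasureInterp} provides the analytic square-root structure of $\hat\rho_0(\cdot,\alpha)$ in the sense of Definition \ref{d:stable}, with power-series coefficients depending affinely on $\alpha$ through the Lagrange-inversion construction. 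A standard transcription of particle rigidity into Stieltjes-transform bounds on $\cD_0^{\rm in}\cup\cD_0^{\rm out}\cup\cD_0^{\rm far}$ supplies the remaining ingredients of Assumption \ref{a:initial}. Invoking Theorem \ref{Thm:EdgeRidgity} and Corollary \ref{Col:GrenEdg} with $\alpha$-uniform constants (granted by the uniform analyticity of $V_\alpha$ and $\hat\rho_0(\cdot,\alpha)$) yields both the rigidity \eqref{eqn:rigzi} and the companion local law for $m_t(z,\alpha)-\hat m_t(z,\alpha)$ at times $t \ll T = (\log N)^{-3}$.

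For the Lipschitz-in-$t$ statements $|E_t(\alpha) - E_t(0)| = \OO(t)$ and $|C_t(\alpha) - C_t(0)| = \OO(t)$, the key observation is that at $t=0$ the reference measures $\hat\rho_0(\cdot,0)$ and $\hat\rho_0(\cdot,1)$ both exhibit the common leading behavior $C_0\sqrt{E_0-x}$ at the common edge $E_0$; consequently the $\hat F^{-1}$-interpolation forces $E_0(\alpha) = E_0$ and $C_0(\alpha) = C_0$ for every $\alpha$. The time-Lipschitz property of the edge and the leading coefficient provided by Proposition \ref{prop:TimStab}, applied at $\alpha$ and at $0$ with constants uniform in $\alpha$, then yields $|E_t(\alpha)-E_0|, |E_t(0)-E_0|, |C_t(\alpha)-C_0|, |C_t(0)-C_0| = \OO(t)$, and the triangle inequality closes the bound.

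The estimate \eqref{e:interrig} and its $E>0$ counterpart follow from the expansion
\begin{equation*}
\hat m_t(E_t(\alpha)+E,\alpha) - \hat m_t(E_t(\alpha),\alpha) = E\int \frac{\hat\rho_t(x,\alpha)\,dx}{(x-E_t(\alpha))(x-E_t(\alpha)-E)},
\end{equation*}
together with the square-root profile $\hat\rho_t(x,\alpha) = (1+\oo(1))C_t(\alpha)\sqrt{[E_t(\alpha)-x]_+}$ from Proposition \ref{prop:TimStab}. Explicit evaluation gives a leading real part of the form $\pm\pi C_t(\alpha)\sqrt{|E|}$ in the relevant regimes; taking the $\alpha$-minus-$0$ difference and invoking the Lipschitz bounds $|C_t(\alpha)-C_t(0)|, |E_t(\alpha)-E_t(0)| = \OO(t)$ produces the stated bounds once $t = \OO(N^{\omega_1-1/3})$ is used. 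Estimate \eqref{e:classloc} follows by inverting $i/N = \int_{\gamma_i(t,\alpha)}^\infty \hat\rho_t(x,\alpha)\,dx$ near the edge: square-root integration gives $E_t(\alpha) - \gamma_i(t,\alpha) = (3i/(2NC_t(\alpha)))^{2/3}(1+\oo(1))$, and taking the $\alpha$-difference while using $|C_t(\alpha)-C_t(0)| = \OO(t)$ yields a contribution of order $t(i/N)^{2/3}$, which fits within $N^\epsilon i^{2/3}/N^{2/3}$ in the time regime of Theorem \ref{thm:maindbm}.

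The main obstacle will be securing the uniformity in $\alpha$ at every step, particularly ensuring that the analyticity radius of $\hat\rho_0(\cdot,\alpha)$ and the positivity of $C_0(\alpha)$ do not degenerate. This is built into the Lagrange-inversion proof of Proposition \ref{Prop:MeasureInterp}: the output power series has exponentially decaying coefficients depending affinely on $\alpha$, so every implicit constant in Theorem \ref{Thm:EdgeRidgity} and Proposition \ref{prop:TimStab} can be chosen uniform in $\alpha$, and the full proposition is established.
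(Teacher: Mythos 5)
Your approach is essentially the one the paper intends: treat $\{z_i(t,\alpha)\}$ as a $\beta$-DBM with analytic potential $V_\alpha$ and reference profile $\hat\rho_t(\cdot,\alpha)$, verify that the interpolated initial data satisfies Assumption \ref{a:initial} at scale $\eta^*\asymp N^{-1/3}$ (using the preceding proposition for particle rigidity and Proposition \ref{Prop:MeasureInterp} for the square-root structure), and then invoke Theorem \ref{Thm:EdgeRidgity}, Corollary \ref{Col:GrenEdg} and Proposition \ref{prop:TimStab} with $\alpha$-uniform constants. The paper itself outsources the details, simply citing Lemmas 7.11 and 7.12 of \cite{LandonEdge} with the remark that only closeness of the interpolated measures near the edge and their common square-root behavior are needed; you have filled in the argument that the paper merely gestures at.

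Your key observation for the Lipschitz-in-$t$ bounds, namely that $E_0(\alpha)=E_0$ and $C_0(\alpha)=C_0$ for all $\alpha$ because the interpolation is set up so that $\hat\rho_0(\cdot,0)$ and $\hat\rho_0(\cdot,1)$ share the same edge $E_0$ and leading coefficient $C_0$, and therefore $|E_t(\alpha)-E_t(0)|\le|E_t(\alpha)-E_0|+|E_0-E_t(0)|=\OO(t)$ by Proposition \ref{prop:TimStab}, is correct and is exactly what is implicit in the paper's construction of the quadratic potential $W$.

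One small imprecision: the square-root expansion $\Re[\hat m_t(E_t+E,\alpha)-\hat m_t(E_t,\alpha)]\sim \pi C_t(\alpha)\sqrt{E}$ only applies for $E>0$ (outside the support). For $E\le 0$ the square-root contribution $\sqrt{B_t}$ is purely imaginary, so the leading real part is the analytic piece $A_t(E_t+E)-A_t(E_t)=\OO(|E|)$, whose $\alpha$-difference is then controlled by the $\alpha$-Lipschitz continuity of the coefficients of $A_t$. This does not alter the conclusion, since the inside-spectrum bound in \eqref{e:interrig} is weaker (of order $|E|N^{1/3+\epsilon}$) than what the expansion actually produces, but you should split the two regimes explicitly rather than invoking the $\sqrt{|E|}$ form for both. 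Similarly, for \eqref{e:classloc} the inversion $E_t(\alpha)-\gamma_i(t,\alpha)\approx(3i/(2NC_t(\alpha)))^{2/3}$ should be accompanied by control of the $\oo(1)$ correction terms coming from the higher-order coefficients $b_2(t,\alpha), b_3(t,\alpha),\ldots$; these are $\alpha$-Lipschitz by the power-series ODE system in Appendix \ref{a:TimStab}, which supplies the uniformity you correctly flag as the main technical concern.
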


The proof of the rigidity results are the same as in Lemma 7.11 and 7.12 of \cite{LandonEdge}. It only involves knowing that the two measures are close near the edge up to a small multiplicative factor and square root behavior around the edge.  As a remark, the corresponding result in \cite{LandonEdge} gives the bound on the right hand side of \eqref{e:classloc} as a function of $t$ and for a larger range of eigenvalues. However, if one investigates the proof of the coming Proposition \ref{prop:shortrangapprox} where these estimates are used, only the weaker version presented here is ever used.

\subsection{Short-range approximation}

We recall that the edge $E_t(\alpha)$ satisfies the differential equation:
\begin{align}
    \rd E_t(\alpha)=- m_t (E_t (\alpha ), \alpha )  \rd t- \frac{1}{2} V'(E_t(\alpha)) \rd t
\end{align}
Combining the SDE of $z_i (t, \alpha)$, we get
\begin{align}\begin{split}
&\rd \left(z_i (t, \alpha ) -E_t(\alpha)\right) = \sqrt{\frac{ 2}{ \beta N}} \rd B_i(t) + \frac{1}{N} \sum_{j:j\neq i} \frac{1}{ z_i (t, \alpha ) - z_j (t, \alpha ) } \rd t\\
&- \frac{1}{2} V'(z_i(t,\alpha)) \rd t
  +  m_t (E_t (\alpha ), \alpha )  \rd t + \frac{1}{2} V'(E_t(\alpha)) \rd t.
\end{split}\end{align}

{
The important effects of the edge behavior are due to short range interactions. To quantify this information, we also use the set of indices $\cal A \subseteq \qq{N} \times \qq{N}$.  We choose $\cal A$ to be symmetric, i.e., $(i, j) \in \cal A$ iff $(j, i) \in \cal A$.  The definition of $\cal A$ requires the choice of
\beq
\ell := N^{\om_\ell}.
\eeq
We let
\beq
\cal A := \left\{ (i, j) : |i-j| \leq \ell ( 10 \ell^2 + i^{2/3} + j^{2/3} ) \right\}.
\eeq

}

We denote the interval 
\begin{align*}
    I_i(t, \alpha)=[\gamma_{j-}(t,\alpha)-E_t(\alpha), \gamma_{j+}(t,\alpha)-E_t(\alpha)],
\end{align*}
where $j-=\min_j\{(i,j)\in \cA\}$ and $j+=\max_j\{(i,j)\in \cA\}$.

We introduce the short range approximation of $z_i(t,\alpha)$ by $\tilde z_i(t,\alpha)$, such that the difference $z_i(t,\alpha)-\tilde z_i(t,\alpha)$ is negligible for small indices $i$. The advantage for the new dynamics $\tilde z_i(t,\alpha)$ is that the derivative $\del_\alpha(\tilde z_i(t,\alpha)-E_t(\alpha))$ does not depend on particles far away for small indices $i$. The second benefit is the fact that there are fewer $\alpha$ dependencies near the edge, which will make the later analysis simpler.

For $1 \leq i \leq N^{\om_A}$,
\begin{align}\begin{split}\label{e:short}
&\rd (\tilz_i (t, \alpha )-E_t(\alpha)) = \sqrt{\frac{2}{\beta N}} \rd B_i(t) + \frac{1}{N}
\sum_{j:(i,j)\in \cal A} \frac{\rd t}{ \tilz_i (t, \alpha ) - \tilz_j (t, \alpha ) }
\\
&+ \int_{ I^c_i (0, t) } \frac{\hat \rho_t (E+E_t(0),0)}{ \tilz_i (t, \alpha ) -E_t(\alpha)- E }  \rd E \rd t
   + \Re[ m_t (E_t (0), 0) ]\rd t,
\end{split}\end{align}

for $N^{\om_A} \leq i $,
\begin{align}\begin{split}\label{e:long}
&\rd (\tilz_i (t, \alpha )-E_t(\alpha)) = \sqrt{\frac{2}{\beta N}} \rd B_i(t)  + \frac{1}{N} \sum_{j:(i,j)\in A} \frac{\rd t}{ \tilz_i (t, \alpha ) - \tilz_j (t, \alpha ) }\\
&+  \int_{ I^c_i (\alpha, t)  } \frac{\hat \rho_t (E+E_t(\alpha) , \alpha )}{ \tilz_i (t, \alpha )-E_t(\alpha) - E }  \rd E \rd t 
  -\frac{1}{2} V_\alpha'(\tilz(t,\alpha))\rd t \\
  &+ \Re[ m_t (E_t (\alpha), \alpha) ] \rd t + \frac{1}{2}V_\alpha'(E_{t}(\alpha))\rd t.
\end{split}\end{align}

{ One should notice that for particles near the edge, we have largely removed the dependence on $\alpha$. What one should realize is that the effects of the interpolation are very small near the edge, so one can approximate replacing terms like $\Re[m_t(E_t(\alpha),\alpha)$ by its counterpart at $\alpha =0$. Similarly, the effect of $V_{\alpha}'(E_t(\alpha)) - V_{\alpha}(\hat{z}(t,\alpha))$ is negligible near the edge. Since these error terms are small, and our differential kernel is a contraction in $\ell^p$ space, we are able to show the difference upon making the short range approximation is small. The details of the proof are similar to those found in \cite{LandonEdge} Lemma 3.7; we have the same rigidity estimates \eqref{eqn:rigzi} as well as the measure comparison estimates \eqref{e:interrig}- \eqref{e:classloc}, which allow us to show that the error made upon replacing the interpolating terms at $\alpha$ with terms at $0$ are negligible at scales $i \le N^{\omega_A}$.

\begin{proposition} \label{prop:shortrangapprox}
With the construction \eqref{e:short} and \eqref{e:long}, we have
\begin{align}
    \sup_{0\leq \alpha\leq 1}\sup_{0\leq t\leq T}\max_{1\leq i\leq N}|z_i(t,\alpha)-\tilde z_i(t,\alpha)|\leq \frac{1}{N^{2/3-\fc}},
    \end{align}
and especially,
\begin{align}
    \sup_{0\leq \alpha\leq 1}\sup_{0\leq t\leq T}\sup_{1\leq i\leq K}
    |z_i(t,\alpha)-\hat \gamma(t,\alpha)|\leq \frac{M}{N^{2/3}i^{1/3}}.
\end{align}
\end{proposition}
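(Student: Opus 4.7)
The plan is to adapt the short-range/long-range decomposition argument of \cite[Lemma 3.7]{LandonEdge}. Set $u_i(t,\alpha) := z_i(t,\alpha) - \tilz_i(t,\alpha)$. Since the driving Brownian motions agree, subtracting the defining equations \eqref{e:short}--\eqref{e:long} from \eqref{e:defz} yields the linear system
\begin{equation*}
\del_t u_i = \sum_{j:(i,j)\in \cA} B_{ij}(u_j - u_i) + \cE_i, \qquad u_i(0,\alpha)=0,
\end{equation*}
with $B_{ij} = \bigl(N(z_i-z_j)(\tilz_i-\tilz_j)\bigr)^{-1}$ and $\cE_i$ collecting the error terms incurred by (a) restricting the pair interaction to $\cA$ and replacing the complement by an integral against $\hat\rho_t$, and (b) for $i\le N^{\om_A}$, replacing the $\alpha$-dependent near-edge integrand and the potential $V_\alpha'$ by their $\alpha=0$ counterparts combined with the edge shift $\Re[m_t(E_t(0),0)]$. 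Using the rigidity \eqref{eqn:rigzi}, the spacings $z_i-z_j$ and $\tilz_i-\tilz_j$ have the same sign and comparable magnitude, so $B_{ij}\ge 0$; the operator $u\mapsto(\sum_j B_{ij}(u_j-u_i))_i$ has vanishing row sums and hence generates an $\ell^\infty$-contraction semigroup. Consequently $\sup_i|u_i(t,\alpha)|\le \int_0^t\sup_i|\cE_i(s,\alpha)|\,\rd s$.

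The bulk of the work is to bound $\cE_i$. The long-range approximation error in (a) is standard: by rigidity and a Riemann sum comparison to the integral of $\hat\rho_t$ against $(z_i-E)^{-1}$, one gets a contribution of size $N^{-1+\oo(1)}$, uniformly in $i$. The $\alpha$-replacement error in (b) is the delicate part. One must show that on the near-edge window $E\in I_i^c$ relevant to small indices, the substitutions $\hat\rho_t(E,\alpha)\mapsto \hat\rho_t(E+(E_t(\alpha)-E_t(0)),0)$, $\Re[m_t(E_t(\alpha),\alpha)]\mapsto \Re[m_t(E_t(0),0)]$, and the dropping of $\tfrac12(V_\alpha'(\tilz)-V_\alpha'(E_t(\alpha)))$ produce an aggregate error of at most $N^{-1+\fc}$. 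This is where the Lipschitz bounds $|E_t(\alpha)-E_t(0)|=\OO(t)$, $|C_t(\alpha)-C_t(0)|=\OO(t)$, the comparison \eqref{e:interrig} of the subtracted Stieltjes transforms, and the classical-location comparison \eqref{e:classloc} enter crucially. Integrating $\sup_i|\cE_i|\lesssim N^{-1+\fc}$ in time on $[0,T]$ with $T=\OO(N^{\omega_1-1/3})$ gives $\sup_i|u_i|\le N^{-2/3+\fc}$.

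The main obstacle is the $\alpha$-replacement step for indices near $N^{\om_A}$: globally $\hat\rho_t(\cdot,\alpha)$ and $\hat\rho_t(\cdot,0)$ differ on macroscopic scales, so one cannot simply compare densities pointwise. The saving point is that the kernel $(z_i-E)^{-1}$ in the long-range integral concentrates the mass on a neighbourhood of the edge of size controlled by \eqref{e:classloc}, where both measures have the same square-root profile up to the shift $E_t(\alpha)-E_t(0)$; \eqref{e:interrig} then provides the needed cancellation between the two $\alpha$-dependent expressions, giving the $N^{-1+\fc}$ bound rather than the naive $\OO(1)$.

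Finally, the sharper rigidity $|z_i(t,\alpha)-\hat\gamma_i(t,\alpha)|\le M/(N^{2/3}i^{1/3})$ for $1\le i\le K$ follows by combining the approximation bound $|u_i|\le N^{-2/3+\fc}$ with optimal edge rigidity for the short-range process $\tilz_i$. Since the near-edge portion of \eqref{e:short} is $\alpha$-independent, Theorem \ref{Thm:EdgeRidgity} and Corollary \ref{Col:GrenEdg} apply directly to the $\tilz$-dynamics (with the same initial data) to yield $|\tilz_i(t,\alpha)-\hat\gamma_i(t,\alpha)|\le M/(N^{2/3}i^{1/3})$, and since $N^{-2/3+\fc}\ll M/(N^{2/3}i^{1/3})$ for bounded $i$, the triangle inequality concludes.
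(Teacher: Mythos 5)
Your approach to the first inequality is essentially the right one and matches what the paper intends (it explicitly defers the details to \cite[Lemma 3.7]{LandonEdge}): subtract the two SDEs, write the difference as a linear system driven by a kernel with vanishing row sums, use the $\ell^\infty$-contraction property of the propagator, and control the forcing term $\cE_i$ by splitting it into the short-range cutoff error and the $\alpha$-replacement error, the latter being controlled by \eqref{e:interrig}--\eqref{e:classloc} and the Lipschitz bounds on $E_t(\alpha)$ and $C_t(\alpha)$. Two caveats you should record explicitly: you need $B_{ij}\geq 0$ along the whole flow, which requires $z_i-z_j$ and $\tilz_i-\tilz_j$ to have the same sign, and this is itself a bootstrap input (one introduces a stopping time on which $\sup_i|u_i|$ is smaller than the minimal gap and shows the bound self-improves). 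Also the propagator is time-inhomogeneous, so the statement that the evolution is an $\ell^\infty$-contraction should be made for the two-parameter semigroup $\mathcal{U}(s,t)$ generated by $\mathcal{L}(\cdot)$, not just its frozen-time generators.

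Your derivation of the second inequality, however, has a genuine gap. You propose to apply Theorem~\ref{Thm:EdgeRidgity} and Corollary~\ref{Col:GrenEdg} \emph{directly} to the short-range dynamics $\tilz_i(t,\alpha)$ and then use the triangle inequality with $|u_i|\le N^{-2/3+\fc}$. That step does not go through: Theorem~\ref{Thm:EdgeRidgity} is proved for the full Dyson Brownian motion \eqref{DBM}, with the all-to-all interaction and the potential $V$ entering through the contour estimate of Proposition~\ref{prop:HFbound}; the $\tilz$-process \eqref{e:short}--\eqref{e:long} is a different SDE with a truncated interaction and an externally imposed drift, and none of the Stieltjes-transform machinery in Section~4 has been established for it. Moreover, the claim that the near-edge block of \eqref{e:short} is $\alpha$-independent is not quite true: the sum $\sum_{j:(i,j)\in\cA}$ includes particles with index near $N^{\om_A}$, whose dynamics in \eqref{e:long} is $\alpha$-dependent, so $\tilz_i(t,\alpha)$ does depend on $\alpha$ even for small $i$. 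Fortunately none of this is needed: the second inequality is simply \eqref{eqn:rigzi} restricted to $1\le i\le K$ (with $\hat\gamma(t,\alpha)$ a typo for the classical location $\gamma_i(t,\alpha)$ defined in that display). You should cite that estimate directly rather than rederiving a weaker version of it through the $\tilz$-process.
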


In the following we show that  for time $t\gg N^{1/3}$, $\max_i |\del_\alpha\tilde z_i(t,\alpha)|$ is negligible. 
Let $u_i (t, \alpha ) \deq \del_\alpha (\tilde z_i (t, \alpha )-E_t(\alpha))$.  We see that $u(t,\alpha)=(u_1(t,\alpha), u_2(t,\alpha),\cdots, u_N(t,\alpha))$ satisfies the equation,
\beq
\del_t u(t,\alpha) = \cal L u(t,\alpha) + \cal E,
\eeq
where the operator $\cal L$ and the force term $\cal E$ are given as follows. The operator $\cal L$ is 
\beq
\cal L = \cal B + \cal V,
\eeq
where
\beq
( \cal B u )_i = \frac{1}{N} \sum_{j: (i,j)\in \cal A} \frac{ u_j - u_i}{ ( \tilde z_i ( \alpha, t) - \tilde z_j ( \alpha, t) )^2},
\eeq
where for $1 \leq i \leq N^{\om_A}$,
\beq
\cal V_i = - \int_{ I_i (0, t ) } \frac{\rho_t (E, 0) }{ ( \tilde z_i ( \alpha, t) - E )^2 },\quad \cal E=0
\eeq
for $ N^{\om_A} \leq i$,
\beq
\cal V_i = - \int_{ I_i (\alpha, t ) } \frac{\rho_t (E, \alpha ) }{ ( \tilde z_i (\alpha, t) - E )^2 },\quad |\cal E|\leq N^C.
\eeq

The same as in \cite{LandonEdge}, the propagator of the operator $\cal L$ satisfies a finite speed estimate. It follows from an energy estimate the same as in \cite{LandonEdge}, we get
\begin{align}\begin{split}
    &\phantom{{}={}}N^{2/3}|(z_i(t,1)-E_t(1))-(z_i(t,0)-E_t(0))|\\
    &=N^{2/3}|(\tilde z_i(t,1)-E_t(1))-(\tilde z_i(t,0)-E_t(0))|+\OO\left(N^{-\fc}\right)\\
    &=N^{2/3}\left|\int_0^1\del_\alpha(\tilde z_i(t,\alpha)-E_t(\alpha))\right|+\OO\left(N^{-\fc}\right)\\
    &=\OO\left(N^{\varepsilon}/(N^{1/3}t)+N^{-\fc}\right)=\oo(1),
\end{split}\end{align}
provided $t\gg N^{-1/3}$, and Theorem \ref{thm:maindbm} follows.

\appendix
\section{Proof of Proposition \ref{prop:TimStab}}
\label{a:TimStab}

The proof of Theorem \ref{prop:TimStab} is based on performing a power series expansion of the analytic functions $A_0$ and $B_0$ in a neighborhood around $E_0$ and solving the McKean-Vlasov equation term by term. 
 $A_0(z),B_0(z)$ have power series representations
\begin{align}
A_0(z)&=a_0+a_1(z-E_0)+a_2(z-E_0)^2+a_3(z-E_0)^3+\cdots,\\
B_0(z)&=b_0+b_1(z-E_0)+b_2(z-E_0)^2+b_3(z-E_0)^3+\cdots,
\end{align}
such that
\begin{align}
|a_i|, |b_i|\leq \frac{C_0M^{i}}{(i+1)^2},\quad i=0,1,2,\cdots.
\end{align}

The following proposition states that the Stieltjes transform of $\hat\mu_t$ has the form $A_t+\sqrt{B_t}$, and $A_t, B_t$ have power series representation in a neighborhood of $E_0$. The Proposition \ref{prop:TimStab} is a natural consequence.
\begin{proposition}\label{p:AB}
We assume Assumption \eqref{e:holom0}. We fix small $T>0$ and $L= 1/T$. Then for $t\in [0,T]$, the solution of \eqref{eq:dm0} is given by
\begin{align}\label{e:mtAB}
\hat m_t(z)=A_t(z)+\sqrt{B_t(z)},
\end{align}
and in a small neighborhood of $E_0$, $A_t(z)$ and $B_t(z)$ have power series representations,
\begin{align}\label{e:ABpower}
\begin{split}
A_t(z)&=a_0(t)+a_1(t)(z-E_0)+a_2(t)(z-E_0)^2+a_3(t)(z-E_0)^3+\cdots,\\
B_t(z)&=b_0(t)+b_1(t)(z-E_0)+b_2(t)(z-E_0)^2+b_3(t)(z-E_0)^3+\cdots,
\end{split}
\end{align}
where the coefficients satisfy
\begin{align}\label{e:coebound}
|a_i(t)|, |b_i(t)|\leq \frac{CM^ie^{Lti}}{(i+1)^2}.
\end{align}
Moreover, in a small neighborhood of $E_0$, $B_t(z)$ has a unique simple root at $z=E_t$,
\begin{align}\label{e:AE}
\del_t E_t=-\hat m_t(E_t)-\frac{V'(E_t)}{2}.
\end{align}
\end{proposition}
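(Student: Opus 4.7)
The plan is to substitute the ansatz $\hat m_t = A_t + \sqrt{B_t}$ directly into \eqref{eq:dm0}, expand $\partial_z \hat m_t = \partial_z A_t + \partial_z B_t/(2\sqrt{B_t})$, and match the analytic part and the $1/\sqrt{B_t}$-part separately. A direct computation decouples \eqref{eq:dm0} into the coupled quasilinear system
\begin{align*}
\partial_t A_t &= \partial_z A_t\,(A_t + V'/2) + \tfrac{1}{2}\partial_z B_t + \tfrac{1}{2} A_t V'' + \int g(z,x)\,d\hat\rho_t(x),\\
\partial_t B_t &= \partial_z B_t\,(A_t + V'/2) + B_t\,(2\partial_z A_t + V'').
\end{align*}
The integral term is analytic in $z$ on a fixed neighborhood of $E_0$: $g(z,\cdot)$ is entire and $\hat\rho_t$ remains compactly supported (the deterministic analogue of Proposition \ref{normbound}), so this contribution expands into a power series in $(z-E_0)$ whose coefficients involve only $V^{(j)}(E_0)$ and the uniformly bounded moments $\mu_k(t) = \int (x-E_0)^k\,d\hat\rho_t(x)$.

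Substituting \eqref{e:ABpower} into the above system and matching coefficients of $(z-E_0)^n$ produces an infinite hierarchy of ODEs for $(a_n(t), b_n(t))$ whose right-hand sides are Cauchy-product polynomials in $\{a_k, b_k\}_{k \leq n+1}$ together with the explicit inputs $V^{(j)}(E_0)$ and $\mu_k(t)$. The key structural feature is that each equation depends linearly on the next-order coefficients $a_{n+1}, b_{n+1}$ through the $\partial_z$ terms---the system loses exactly one derivative per level---which is precisely why the ansatz \eqref{e:coebound} must carry the exponentially growing factor $e^{Lti}$. I would establish \eqref{e:coebound} by a Picard contraction in the weighted Banach space with norm
\[
\|(a,b)\|_t := \sup_{n \geq 0} \frac{(n+1)^2}{M^n e^{Ltn}} \bigl(|a_n(t)| + |b_n(t)|\bigr).
\]
The weight $e^{Ltn}$ is calibrated so that the extra factor $(n+1)$ produced by the derivative loss is absorbed through $\int_0^t e^{Lsn}\,ds \leq e^{Ltn}/(Ln)$; the choice $L = 1/T$ with $T$ sufficiently small then makes the linear contribution strictly less than $\tfrac{1}{2}\|(a,b)\|_\tau$, while the quadratic nonlinearities close inside the norm because $\sum_{k+l=n}(k+1)^{-2}(l+1)^{-2} \leq C(n+1)^{-2}$.

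Finally, for the edge equation, the coefficient bounds combined with $b_0(0)=0$ and $b_1(0)\neq 0$ (simple root of $B_0$ at $E_0$) allow the implicit function theorem to produce a unique simple root $E_t$ of $B_t$ near $E_0$, depending smoothly on $t$ with $|E_t - E_s| = O(|t-s|)$ by Lipschitz continuity of $b_0, b_1$. Differentiating $B_t(E_t)=0$ in $t$ and substituting the $B$-equation above at $z=E_t$---where the last term vanishes since $B_t(E_t)=0$---yields $\partial_z B_t(E_t)\bigl(A_t(E_t) + V'(E_t)/2 + \dot E_t\bigr) = 0$; dividing by $\partial_z B_t(E_t) = b_1(t) \neq 0$ and using $\sqrt{B_t(E_t)} = 0$, so $\hat m_t(E_t) = A_t(E_t)$, gives \eqref{e:AE}. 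The principal technical obstacle is the coefficient estimate: engineering a single weighted norm that simultaneously absorbs the one-derivative loss, the Cauchy-product nonlinearity, and the bounded but time-dependent moment inputs is the heart of the argument.
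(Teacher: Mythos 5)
Your proposal is correct and follows essentially the same route as the paper: the same ansatz, the same decoupling of \eqref{eq:dm0} into an analytic and a $1/\sqrt{B_t}$ part yielding a coupled first-order system, a power-series expansion producing an ODE hierarchy with a one-derivative loss absorbed by the $e^{Lti}$ weight, and the same computation for the edge equation. The only cosmetic differences are that the paper works with $D_t = A_t + V'/2$ instead of $A_t$ directly, runs an explicit Picard iteration with inductive bounds rather than phrasing it as a contraction in a weighted sequence space, and isolates the simple root via Rouch\'e's theorem rather than the implicit function theorem.
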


\begin{proof}
We make the ansatz, such that the solution of \eqref{eq:dm0} is given by \eqref{e:mtAB}, and  $A_t(z)$, $B_t(z)$ have power series representations given by \eqref{e:ABpower}. We plug \eqref{e:mtAB} into \eqref{eq:dm0},
\begin{align}\begin{split}\label{e:ABeq}
\del_t A_t +\frac{\del_t B_t}{2\sqrt{B_t}}
&=D_t\del_zD_t+\frac{\del_z B_t}{2}+R_t(z)+\frac{1}{\sqrt{B_t}}\left(B_t\del_zD_t+\frac{1}{2}D_t\del_z B_t\right),
\end{split}\end{align}
where
\begin{align} \label{e:seriesExpansion}
D_t(z)=A_t+\frac{V'(z)}{2},\quad R_t(z)=\int_\bR g(z,x)\rd \hat{\mu}_t(x)-\frac{V'(z)V''(z)}{4},
\end{align}
are analytic in a neighborhood of $E_0$. One should note here that our goal in this section is not to show existence of solution of the original McKean-Vlasov equation, but only the existence of the analytic extension. Thus, the fact that we use $\hat{\mu}_t$ in our expression of $R_t(z)$ is not an issue.

For \eqref{e:ABeq} to hold, it is sufficient that
\begin{align}\begin{split}\label{e:BDReq}
\del_tD_t&=D_t\del_zD_t+\frac{\del_z B_t}{2}+R_t(z),\\
\frac{\del_t B_t}{2}&=B_t\del_zD_t+\frac{1}{2}D_t\del_z B_t.
\end{split}\end{align}
We solve \eqref{e:BDReq} using the power series representations. Let
\begin{align}\begin{split}
B_t(z)&=b_0(t)+b_1(t)(z-E_0)+b_2(t)(z-E_0)^2+b_3(t)(z-E_0)^3+\cdots\\
D_t(z)&=d_0(t)+d_1(t)(z-E_0)+d_2(t)(z-E_0)^2+d_3(t)(z-E_0)^3+\cdots\\
R_t(z)&=r_0(t)+r_1(t)(z-E_0)+r_2(t)(z-E_0)^2+r_3(t)(z-E_0)^3+\cdots,
\end{split}\end{align}
then \eqref{e:BDReq} is equivalent to the infinite system of ordinary differential equations for $i=0,1,2,\cdots$,
\begin{align}\begin{split}\label{e:ode}
\del_t d_i(t)&=\sum_{j=0}^i(j+1)d_{i-j}(t)d_{j+1}(t)+\frac{1}{2}(i+1)b_{i+1}(t)+r_i(t)\\
\del_t b_i(t)&=\sum_{j=0}^i(j+1)b_{i-j}(t)d_{j+1}(t)+\frac{1}{2}\sum_{j=0}^i(j+1)d_{i-j}(t)b_{j+1}(t).
\end{split}\end{align}
Although \eqref{e:ode} is not Lipschitz, we can still solve them by the Picard iteration. Let
\begin{align*}
d_i^{(0)}(t)=d_i(0), \quad b_i^{(0)}(t)=b_i(0),
\end{align*}
and recursively we define
\begin{align}\begin{split}\label{e:recur}
d_i^{(n+1)}(t)
&=d_i(0)+\int_0^t\left(\sum_{j=0}^i(j+1)d^{(n)}_{i-j}(t)d^{(n)}_{j+1}(t)+\frac{1}{2}(i+1)b^{(n)}_{i+1}(t)+r_i(t)\right)\rd t,\\
b_i^{(n+1)}(t)
&=b_i(0)+\int_0^t\left(\sum_{j=0}^i(j+1)b^{(n)}_{i-j}(t)d^{(n)}_{j+1}(t)+\frac{1}{2}\sum_{j=0}^i(j+1)d^{(n)}_{i-j}(t)b^{(n)}_{j+1}(t)\right)\rd t.
\end{split}\end{align}

As we have noted before, the existence of the measure $\hat{\mu}_t$ is not in question and, thus, we do not need to perform an iteration of the $r_i(t)$ terms in $n$. 
%
%
We take large $C>0$, $L>0$ and small $T=1/L$. We first prove by induction that uniformly for $t\in[0,T]$,
\begin{align}\label{e:boundn}
|d_i^{(n)}(t)|, |b_i^{(n)}(t)|\leq  \frac{CM^i e^{Lti}}{(i+1)^2}.
\end{align}
Since $R_t(z)$ is analytic in a neighborhood of $E_0$, we can take $C_0, M>0$ large enough, such that
\begin{align}
|r_i(t)|\leq \frac{C_0M^i }{(i+1)^2}.
\end{align}

We assume that \eqref{e:boundn} holds for $n$, using \eqref{e:recur} we have
\begin{align}\begin{split}
|d_i^{(n+1)}(t)|
&\leq \frac{C_0M^i}{(i+1)^2}+\int_0^t \left(\sum_{j=0}^i
\frac{C^2M^{i+1} e^{Lt(i+1)}}{(i-j+1)^2(j+2)}
+\frac{CM^{i+1} e^{Lt(i+1)}}{2(i+2)}+\frac{C_0M^i }{(i+1)^2}\right)\rd t\\
&\leq \frac{(1+t)C_0M^i}{(i+1)^2}+\sum_{j=0}^i
\frac{C^2M^{i+1} e^{Lt(i+1)}}{L(i+1)(i+2)(j+1)^2}
+\frac{CM^{i+1} e^{Lt(i+1)}}{2L(i+1)(i+2)}\\
&\leq \frac{CM^i e^{Lti}}{(i+1)^2}\left(\frac{(1+t)C_0}{C}+\frac{2CMe^{LT}}{L}+\frac{Me^{TL}}{2L}\right)\leq \frac{CM^i e^{Lti}}{(i+1)^2},
\end{split}\end{align}
provided $C>4C_0$ and $L>8eCM$. Similarly for $b_i^{(n+1)}(t)$,
\begin{align}\begin{split}\label{e:bibound}
|b_i^{(n+1)}(t)|
&\leq \frac{C_0M^i}{(i+1)^2}+\int_0^t\frac{3}{2}\sum_{j=0}^i
\frac{C^2M^{i+1} e^{Lt(i+1)}}{(i-j+1)^2(j+2)}\rd t\\
&\leq \frac{CM^i e^{Lti}}{(i+1)^2}\left(\frac{C_0}{C}+\frac{3CMe^{LT}}{L}\right)\leq \frac{CM^i e^{Lti}}{(i+1)^2},
\end{split}\end{align}
provided $C>2C_0$ and $L>6eCM$. This finishes the proof of claim \eqref{e:boundn}.

In the following we prove that $d_i^{(n)}(t), b_i^{(n)}(t)$ converge uniformly as $n$ goes to infinity, which follows from 
\begin{align}\label{e:difbound}
|d_i^{(n)}(t)-d_i^{(n-1)}(t)|, |b_i^{(n)}(t)-b_i^{(n-1)}(t)|\leq \frac{1}{2^n}\frac{CM^ie^{Lti}}{(i+1)^2}.
\end{align}
In the following we prove \eqref{e:difbound} by induction. We assume that \eqref{e:difbound} holds for $n$, using \eqref{e:recur}, the difference $|d_i^{(n+1)}(t)-d_i^{(n)}(t)|$ is bounded by
\begin{align}\begin{split}
&\phantom{{}={}}
\int_0^t\sum_{j=0}^i(j+1)\left(\left|d^{(n)}_{i-j}(t)-d^{(n-1)}_{i-j}(t)\right|d^{(n)}_{j+1}(t)+d_{i-j}^{(n-1)}(t)\left|d^{(n)}_{j+1}(t)-d^{(n-1)}_{j+1}(t)\right|\right)\\
&+\frac{1}{2}(i+1)\left|b^{(n)}_{i+1}(t)-b^{(n-1)}_{i+1}(t)\right|\rd t\leq  \frac{1}{2^n}\int_0^t \left(\sum_{j=0}^i
\frac{2C^2M^{i+1} e^{Lt(i+1)}}{(i-j+1)^2(j+2)}
+\frac{CM^{i+1} e^{Lt(i+1)}}{2(i+2)}\right)\rd t\\
&\leq \frac{1}{2^n}\frac{CM^ie^{Lti}}{(i+1)^2}\left(\frac{4CMe^{LT}}{L}+\frac{Me^{TL}}{2L}\right)\leq \frac{1}{2^{n+1}}\frac{CM^ie^{Lti}}{(i+1)^2},
\end{split}\end{align}
provided that $L\geq 8eCM$. Similarly the difference $|b_i^{(n+1)}(t)-b_i^{(n)}(t)|$ is bounded by,
\begin{align}\begin{split} \label{e:Bestimates}
&\phantom{{}={}}
\int_0^t\sum_{j=0}^i(j+1)
\left(\left|b^{(n)}_{i-j}(t)-b^{(n-1)}_{i-j}(t)\right|d^{(n)}_{j+1}(t)+b^{(n-1)}_{i-j}(t)\left|d^{(n)}_{j+1}(t)-d_{j+1}^{(n-1)}(t)\right|\right)\\
&+\frac{1}{2}\sum_{j=0}^i(j+1)\left(\left|d^{(n)}_{i-j}(t)-d^{(n-1)}_{i-j}(t)\right|b^{(n)}_{j+1}(t)+d^{(n-1)}_{i-j}(t)\left|b^{(n)}_{j+1}(t)-b_{j+1}^{(n-1)}(t)\right|\right)\rd t\\
&\leq \frac{1}{2^n}\int_0^t \left(\sum_{j=0}^i
\frac{3C^2M^{i+1} e^{Lt(i+1)}}{(i-j+1)^2(j+2)}
\right)\rd t\leq  \frac{1}{2^n}\frac{CM^ie^{Lti}}{(i+1)^2}\frac{6CMe^{LT}}{L}\leq \frac{1}{2^{n+1}}\frac{CM^ie^{Lti}}{(i+1)^2},
\end{split}\end{align}
provided $L\geq 12eCM$.

We denote for $i=0,1,2,\cdots$,
\begin{align}
d_i(t)=\lim_{n\rightarrow \infty}d^{(n)}_i(t),\quad b_i(t)=\lim_{n\rightarrow \infty}b_i^{(n)}(t),
\end{align}
then they satisfy the system of differential equations \eqref{e:recur}, and \eqref{e:coebound} holds.

The same argument as for \eqref{e:bibound}, we get
\begin{align}
|b_i(t)-b_i(0)|\leq \frac{3C^2M^{i+1}(e^{Lt(i+1)}-1)}{(i+1)^2L}.
\end{align}
And thus for $T>0$ small enough, $0\leq t\leq T$, and $z$ on a small circle centered at $E_0$,
\begin{align}
|B_t(z)-B_0(z)|\leq \sum_{i\geq 0}\frac{3C^2(Mz)^{i+1}(e^{Lt(i+1)}-1)}{(i+1)^2L}< |B_0(z)|.
\end{align}
Thus by Rouch{\'e}'s theorem, in a small neighborhood of $E_0$, $B_t(z)$ has a unique simple root at $z=E_t$. Moreover, from our construction, $\bar{B}_t(z)=B_t(\bar z)$, $E_t$ is real. By taking the derivative of $B_t(E_t)=0$ with respect to $t$, and using \eqref{e:BDReq} we get
\begin{align}
\del_t E_t=-\frac{\del_t B_t(E_t)}{\del_z B_t(E_t)}
=-D_t(E_t)=-\hat m_t(E_t)-\frac{V'(E_t)}{2}.
\end{align}
This finishes the proof of Proposition \ref{p:AB}.
\end{proof}

\begin{proof}[Proof of Proposition \ref{prop:TimStab}]
It follows from Proposition \ref{p:AB} that $\hat\mu_t$ has square root behavior. And especially $\hat m_t(E_t) = A_t(E_t) + \sqrt{B_t(E_t)}$ is uniformly bounded for $0\leq t\leq T$. The claim that $E_t$ is Lipschitz in time simply follows from integrating \eqref{e:AE}.


Next we prove that $C_t$ is Lipschitz in time. We notice that $C_t^2=B_t'(E_t)$. To prove $|C_t-C_s|=\OO(t-s)$, it suffices to prove this for $s=0$. Using the series expansion, as in the notation of \eqref{e:ABpower}, we see that
\begin{equation*}
C_t^2-C_0^2=B_t'(E_t) - B'_0(E_0) = b_1(t) -b_1(0) + (E_t-E_0)\sum_{i=2}^{\infty}  i b_i(t)(E_t-E_0)^{i-2}
\end{equation*}

Using the differential equation \eqref{e:ode}, we see that $b_1(t) - b_1(0)=O(t)$. While earlier we have shown that $(E_t - E_0)$ is of $O(t)$. The infinite sum converges provided we take $T$ sufficiently small.
Therefore, it follows that $C_t^2 - C_0^2 = O(t)$. Since $C_0$ is bounded away from $0$, this would imply that $C_t -C_0 = O(t)$ as desired.

\end{proof}

\section{Proof of Corollary \ref{c:mesoCLT}}
\begin{proof}[Proof of Corollary \ref{c:mesoCLT}]
The corollary follows from Theorem \ref{t:mesoCLT} and the rigidity estimate in Theorem \ref{Thm:EdgeRidgity} by the same argument as in \cite{MR3116567}.

We will start considering by consider functions that are can be represented as a convolution of some function with the Cauchy Distribution.
\begin{align}
\psi_{\eta}^{\varepsilon}\deq\psi_{\eta}* \frac{1}{\pi}\frac{(\varepsilon \eta)}{x^2+(\varepsilon\eta)^2}.
\end{align}
We let 
\begin{align}
\hat{\cal L}(\psi_{\eta}^{\varepsilon})
&\deq \sum_{i=1}^N \psi_{\eta}^{\varepsilon}(\lambda_i(t))-N\int_{\bR} \psi^{\varepsilon}_{\eta}(x) \rd \hat\rho_t(x)\\
&=\frac{1}{\pi}\int (N \eta \Im\left[ m_t(E_t + x \eta+\varepsilon\eta \ri)-\hat m_t(E_t + x\eta+\varepsilon\eta\ri) - \frac{2-\beta}{4\beta N} \frac{1}{x\eta + \varepsilon \eta\ri }  \right]) \psi(x)\rd x \\
&- \frac{2-\beta}{4 \pi \beta}  \int \left[\frac{\varepsilon  }{ x ^2 + \varepsilon ^2}\right]\psi(x)\rd  x
\end{align}

For technical reasons, it would be beneficial to consider a modified version of $\cal L$ that will only consider the $c N $ closest eigenvalues near the edge where $c$  is chosen such that $|\gamma_{cN}(t) - E_t| \le r/2$ where $r$ ball of radius $r$ as in \eqref{e:defcDt} and $\gamma_{cN}(t)$is the classical location of the $cN$th eigenvalue with respect to the measure $\hat \rho_t$.  Intuitively, one should understand that for a function of compact support, the behavior of eigenvalues greater than a scale $\eta$ near the edge will not matter.

As such, we will define a modified $\hat{L}(\psi^{\varepsilon}_{\eta}):= \sum_{i=1}^{cN} \psi_{\eta}^{\varepsilon}(\lambda_i(t)) - N \int_{\gamma_{cN}(t)}^{\infty} \psi^{\varepsilon}_{\eta}(x) \rd \hat\rho_t(x)$.

Let $g_t(z) = \sum_{i=1}^{cN}\frac{1}{\lambda_i -z}$ and $\hat{g}_t(z)= \int_{\gamma_{cN}}^{\infty} \frac{1}{x-z} \hat{\rho}_t(z)$.
To simplify notation, we will define
\begin{equation}
    \tilde{h}_{\varepsilon}(x) = N\eta \Im[g_t((E_t + x \eta) + \varepsilon \eta \ri) - \hat{g}_t((E_t + x \eta)+ \varepsilon \eta \ri) - \frac{2- \beta}{4 \beta N} \frac{1}{x\eta + \varepsilon \eta \ri}]
\end{equation}

As before we have the identity
\begin{equation}
    \hat{L}(\psi^{\varepsilon}_{\eta}) = \frac{1}{\pi} \int_{-\infty}^{\infty} \tilde{h}(x) \psi(x) \rd x - \frac{2-\beta}{4 \pi \beta}  \int \left[\frac{\varepsilon  }{ x ^2 + \varepsilon ^2}\right]\psi(x)\rd  x
\end{equation}

We will define $L(\psi^{\varepsilon}_{\eta}):=\frac{1}{\pi} \int_{-\infty}^{\infty} \tilde{h}(x) \psi(x) \rd x$ and show that this has Gaussian Varaince for functions $\psi$ that have decay of the order $x^{-2}$.

{
For convenience of notation, we define
\begin{equation}
    h_{\varepsilon}(x) = [N \eta \Im[m_t((E_t+ x \eta)+ \varepsilon \eta \ri) - \hat{m}_t((E_t +x \eta) + \varepsilon \eta \ri) - \frac{ 2- \beta}{4 \beta N} \frac{1}{x \eta + \varepsilon \eta \ri}] 
\end{equation}

Recall that Theorem \ref{t:mesoCLT} allows us to show the asymptotic Gaussian behavior of a sum of the following form $\sum_{i=1}^{k} a_k h_{\epsilon}(x_k)$. 

Ideally, we would like to find for each $N$ a finite sum of the above form $F_N$ such that we have 
$\lim_{N \rightarrow \infty}  (\bE[e^{i \xi F_N}] - \bE[e^{i \xi  L(\psi^{\varepsilon}_{\eta})}]) =0$ and such that $\lim_{N \rightarrow \infty} \bE[e^{i \xi F_n}]$ has a limit. The existence of such a sequence comes from the uniformity of the approach in Theorem \ref{t:mesoCLT}.

For technical reasons, for each fixed constant $C$, we will find a sequence $F_N^{C}$  such that $\lim_{N \rightarrow \infty}(\bE[e^{i \xi F^{C}_N}] - \bE[e^{i\xi \cal L(\psi^{\varepsilon}_{\eta})}]) \le C^{-1}$ and $\lim_{C \rightarrow \infty} \lim_{N \rightarrow \infty} \bE[e^{i \xi F^{C}_n}]$ exists.

First fix $C$; we will start by  dividing $[-C,C]$ into sufficiently many intervals $[-C,C] = \cup_{k=1}^{M}[b_k,b_{k+1}]$ such that along each interval we have the following uniform continuity estimate.

If there exists $k$ such that $x,y \in [b_k,b_{k+1}]$ then
\begin{equation*}
    |h_{\varepsilon}(x) - h_{\varepsilon}(y)| \le N^{-2} 
\end{equation*}

Our approximation function will be as follows: we let $a_k = \int_{a_k}^{a_{k+1}} \psi(x) \rd x$ and $F_N =\frac{1}{\pi} \sum_{i=1}^{M}a_k h_{\varepsilon}(b_k)$.

We have the following general inequality that allows us to compare the characteristic functions for two random variables $a$ and $b$ at the point $\xi$
\begin{align}\label{e:approx2}
|\bE\left[e^{\ri \xi a}\right]-\bE\left[e^{\ri \xi b}\right]|
\leq |\xi|\bE\left[(a -b)^2\right]^{1/2}.
\end{align}
We will use the above inequality for $a$ is our appropriately chosen $F^{C}_N$ and $b= \cal L(\psi_{\eta}^{\varepsilon})$

One should notice that our continuity bound gives us that
\begin{align}
    |F^{C}_N - L(\psi^{\varepsilon}_{\eta})| &\le N^{-2} \int_{-C}^{C} |\psi| \rd x + \int_{-C}^{C} |h_{\varepsilon}(x)-\tilde{h}_{\varepsilon}(x)||\psi(x)| \rd(x)  +|\int_{- \eta^{-1/6}}^{-C} h_{\varepsilon}(x) \psi(x) \rd x| \\
    & +\int_{-\eta^{-1/6}}^{-C} |h_{\varepsilon}(x) - \tilde{h}_{\varepsilon}(x)||\psi(x)| \rd x +|\int_{(\gamma_{cN}(t) -E_t)\eta^{-1}-1}^{-\eta^{-1/6}} h_{\varepsilon}(x) \psi(x) \rd x|\\
   & +  + \int_{(\gamma_{cN}(t) -E_t)\eta^{-1}-1}^{-\eta^{-1/6}} |h_{\varepsilon} - \tilde{h}_{\varepsilon}(x)|\psi(x)| + |\int_{-\infty}^{(\gamma_{cN}(t)-E_t) \eta^{-1} -1} \tilde{h}_{\varepsilon}(x) \psi(x) \rd x| +\cdots
\end{align}
where the $\cdots$ indicate corresponding terms on the positive real axis.
On the complement of a set With exponentially small probability, we know that all of the eigenvalues $\lambda_1\cdots\lambda_{cN}$ should lie in $(-\gamma_{cN}(t) - \eta, \infty)$ and all of the eigenvalues $\lambda_{cN+1},\cdots\lambda_N$ lie outside $(-\gamma_{cN}(t) + \eta, \infty)$. In our further computations, we will be considering that we are in the set of such rigidity as the contribution of the complement set is exponentially vanishing.

We have the bound $|h_{\varepsilon}(x) - \tilde{h}_{\varepsilon}(x)| \le N \eta \frac{\varepsilon \eta}{(\varepsilon \eta)^2 +(E_t + x \eta- \gamma_{cN} - \eta )^2 }$ for $x \ge -\gamma_{cN}(t) + \eta$ since we assume the eigenvalues from $\lambda_{cN+1}(t)\cdots \lambda_N(t)$ lie in $(-\gamma_{cN}(t) + \eta ,\infty)$ with high probability. Similarly, we have the bound $\tilde{h}_{\varepsilon}(x) \le N \eta \frac{\varepsilon \eta}{(\varepsilon \eta)^2 +(E_t + x\eta -\gamma_{cN} +\eta)^2}$ as we assume the eigenvalues $\lambda_1\cdots\lambda_{cN}$ lie in $(-\gamma_{cN}(t) + \eta, \infty)$.

We can estimate the second integral $\int_{-C}^{C}|h_{\varepsilon}(x) - \tilde{h}_{\varepsilon}(x)| |\psi(x)| \rd x$ as $\OO(C N \eta^2)$ where the constant appearing does not depend on $N$ or $C$.  One should observe that as $N \rightarrow \infty$, the term $(E_t + x \eta - \gamma_{cN} - \eta)$ stays bounded below by a constant, say $2^{-1}(E_t - \gamma_{cN})$.  Thus, $\OO(C N \eta^2)$ is a natural bound which we see goes to 0 as $N \rightarrow \infty$ as $\eta \ll N^{-1/2 - \epsilon}$.

On $[-\eta^{-1/6},-C]$ we have the bound $h_{\varepsilon}(x) \le 1+\OO(N^{-\delta})$ essentially from the computations in \ref{t:mesoCLT}. By using the $x^{-2}$ decay of $\psi$, the integral of $h_{\varepsilon} \psi(x)$ over this region is bounded by $\frac{1}{C}$.

For the fourth integral, we can again use the bound $|h_{\varepsilon} - \tilde{h}_{\varepsilon}(x)| \le \OO(N \eta^2)$ as for $N$ very large, we can bound $(E_t + x \eta - \gamma_{cN} - \eta)$ from below by $2^{-1}(E_t - \gamma_{cN})$. We now also use the decay of $\psi(x) \le \frac{1}{x^2}$ to bound the integral of $\int_{-\eta^{-1/6}}^{-C}|h_{\varepsilon}-\tilde{h}_{\varepsilon}||\psi(x)| \rd x$ by $\OO(N \eta^2 C^{-1})$ where the constant does not depend on $C$ or on $N$. When C is fixed $N\eta^2 \rightarrow 0$, so this part goes to 0 as $N \rightarrow \infty$.

{
On the region $[(\gamma_{cN}(t) - E_t) \eta^{-1} -1, -\eta^{-1/6}] $ we will use the local law bound $h_{\varepsilon}(x) \le M=\OO((\log N)^k)$ for some integer $k$. Since $\psi$ has decay $x^{-2}$, we will be able to bound the integral of $h_{\varepsilon} \psi$ in this region by $M \eta^{1/6}$. We have chosen $\eta \ll N^{-\epsilon}$ for some $\epsilon$, so in the infinite limit the contribution of this integral is 0.

To bound the other integral on the region $[(\gamma_{cN}(t) - E_t) \eta^{-1} -1 , - \eta^{-1/6}]$, we use the bound $|h_{\epsilon} - \tilde{h}_{\epsilon}| \le \frac{\varepsilon \eta}{(\varepsilon \eta)^2 +(E_t + x \eta - \gamma_{cN}(t) +\eta)  }$ on the region $x \in [(\gamma_{cN} -E_t) \eta^{-1} +1, \infty]$. The integral we have to bound, up to constants that do not depend on $N$ or $C$ are
\begin{align}
    N \eta^3 \int_{(\gamma_{cN} -E_t) - \eta}^{(\gamma_{cN}-E_t) + 2 \eta} \frac{1}{\eta^2} \frac{1}{y^2} \rd y + N \eta^3 \int_{ 2 \eta}^{(E_t - \gamma_{cN})- \eta^{5/6}} \frac{1}{y^2} \frac{1}{ (E_t - \gamma_{cN}-y)^2} 
\end{align}
The second integral above can be bounded by using partial fractions. We get the bound $\OO(N \eta^2)$; the integrand in the first integral takes value $\OO(N \eta)$, but this is over a region of size $\eta$. Therefore the bound on this integral is $\OO(N \eta^2)$.

For the region $(-\infty, (\gamma_{cN}(t) - E_t) \eta^{-1} -1)$ we are outside of the spectrum corresponding to $\tilde{h}_{\varepsilon}$ and we use the bound $\tilde{h}_{\varepsilon}(x) \le N \eta \frac{ \varepsilon \eta}{(\varepsilon \eta)^2+(E_t + x \eta - \gamma_{cN} + \eta)}$. By a standard change of variable, we see that we have to bound
\begin{align}
    N \eta^3 \int_{0}^{\infty} \frac{\varepsilon}{y^2 + (\varepsilon \eta)^2} \frac{1}{(y + E_t - \gamma_{cN}(t) -\eta)^2 } \rd y &\le 2 N \eta^3 \int_{\varepsilon \eta}^{\infty} \frac{\varepsilon}{y^2} \frac{1}{(y + (E_t - \gamma_{cN}(t)))^2} \rd y \\
    &+ 2 N \eta^3 \int_{0}^{\varepsilon \eta} \frac{\varepsilon}{(\varepsilon \eta)^2} \frac{1}{(y +(E_t - \gamma{cN}(t)))^2} \rd y 
\end{align}
To evaluate the first integral, one can use a partial fraction decomposition. The order of growth is $\OO(N \eta^2)$. For the second, up to a constant that does not depend on $N$ the integrand is of order $\OO(N \eta)$, but we are integrating this over a region of size $2\eta$. Thus, the size of this is $\OO(N \eta^2)$ which goes to 0 as $N \rightarrow \infty$.

}

The analysis for the integrals along the positive real axis can be checked via the same line of reasoning. We have thus shown that $\lim_{N \rightarrow \infty} [\bE(e^{i \xi F_N^{C}}) - \bE(e^{i \xi \cal L(\psi_{\eta}^{\epsilon})}] \le \OO(C^{-1}) $ where the constant in $\OO(C^{-1})$ does not depend on $C$.

We now only need to show that the double limit $\lim_{C \rightarrow \infty} \lim{N\rightarrow \infty} \bE(e^{i \xi F_N^{C}})$ exists.

From Theorem $\ref{t:mesoCLT}$, we are able to show that 
\begin{align}
    \log \bE[e^{i \xi F^{C}_N}] = -\frac{1}{8\pi^2} (1+ \OO(N^{-\delta}))&\sum_{i,j=1}^{M} (\eta)^2 ( K_{\rm edge}(x_i \eta + \eta \varepsilon \ri, x_j \eta + \eta \varepsilon \ri)- K_{\rm edge}(x_i \eta - \eta \varepsilon \ri, x_j \eta + \eta \varepsilon \ri)\\
    &- K_{\rm edge}(x_i \eta + \eta \varepsilon \ri, x_j \eta - \eta \varepsilon \ri) +K_{\rm edge}(x_i \eta + \eta \varepsilon \ri, x_j \eta + \eta \varepsilon \ri) ) a_i a_j  \\
    &+ \OO(N^{-\delta}) \sum_{i=1}^{M} \frac{2- \beta}{4 \beta}  \frac{\eta}{x_i \eta + \varepsilon \eta \ri} a_k
\end{align}
One should note that for fixed $\varepsilon$, the terms $(\eta)^2 K_{\rm edge}$  and $\frac{\eta}{x_i \eta + \epsilon \eta \ri}$ are bounded above.  Thus  we see that the double sum and single sum are bounded by a constant factor independent of $N$ times $(\int_{-\infty}^{\infty} |\psi| \rd x)^2$ and $\int_{-\infty}^{\infty} |\psi| \rd x$ respectively. 

To show this has a limit, we now only need to show that 
$\sum_{i,j=1}^{M} (\eta)^2 ( K_{\rm edge}(x_i \eta + \eta \varepsilon \ri, x_j \eta + \eta \varepsilon \ri)\cdots)a_i a_j$ has a limit. However, we merely need to use continuity of $(\eta)^2K_{\rm edge}$ on a compact interval to see that as our division of intervals $\cup_{k=1}[b_k,b_{k+1}]$ gets increasingly fine this double sum manifestly approaches the integral $\int_{-C}^{C} \int_{-C}^{C} (\eta)^2 (K_{\rm edge}..) \psi(s) \psi(t) \rd s \rd t$

We have thus shown
\begin{equation}
    \lim_{N \rightarrow \infty} \bE[e^{i \xi F_N^{C}}] = \exp{\int_{-C}^{C} \int_{-C}^{C} (\eta)^2 (K_{\rm edge}..) \psi(s) \psi(t) \rd s \rd t}
\end{equation}
We can clearly take the limit as $C \rightarrow \infty$ to see that

    \begin{align}
\bE\left[e^{\ri \xi \cal L(\psi_{\eta})}\right]
=e^{-\xi^2 \sigma_\varepsilon^2/2}+\oo(1),
\end{align}
where 
\begin{align}
    \sigma_\varepsilon^2 =\int_{-\infty}^{\infty} \int_{-\infty}^{\infty} (\eta)^2 (&K_{\rm edge}(x \eta+  \varepsilon \eta \ri, x\eta+ \varepsilon \eta \ri)-K_{\rm edge}(x \eta+ \varepsilon \eta \ri, x\eta- \varepsilon \eta \ri)\\
    -&K_{\rm edge}(x \eta-  \varepsilon \eta \ri, x\eta+ \varepsilon \eta \ri)+K_{\rm edge}(x \eta+  \varepsilon \eta \ri, x\eta+ \varepsilon \eta \ri)) \psi(s) \psi(t) \rd s \rd t 
\end{align}

{ One can easily check that this expression is equivalent to
\begin{equation}
    \frac{1}{4\pi^2 \beta}\int_{\bR}\int_{\bR}\frac{(\psi^{\varepsilon}(x^2)-\psi^{\varepsilon}(x'^2))^2}{(x-x')^2}\rd x\rd x
\end{equation}
}

We now only need to extend our logic to a larger class of functions.
Notice that our argument above extends to finite sums of the form
$\sum_{k=1}^{n}  \psi^{y_k}_k$. Thus, if we are able to find an appropriate approximating sequence of this type in $H^s$, { then we will be finished as our covariance expression is manifestly a continuous functional in the Banach space $H^s$.}  The approximation involves a Littlewood-Paley decomposition of the same type as in \cite[Section 3]{MR3116567}.

}

We can write
\begin{equation}
    \psi = \sum_{k=-1}^{\infty} P_{2^{-k}}*g_k(x)
\end{equation}
Thus, we will have
\begin{equation}
    L(\psi)= \sum_{k=1}^{\infty} L(P_{2^{-k}} * g_k(x))
\end{equation}
Since $\psi$ is of compact support $L(\psi) = \mathcal{L}(\psi)$ in the limit as $N \rightarrow \infty$. We will also remark here that we get uniform bound $P_{2^{-k}} * g_k(x) \le \frac{C_l}{(2\pi x)^{l} {2^k}(l-1)}$ as in \eqref{e:DecayInt}. Thus, replacing $L(P_{2^{-k}}*g_k(x))$ with $\mathcal{L}(P_{2^{-k}}*g_k(x))$ will give a error that is summable in $k$ and vanish in $N$. Our later analysis will involve bounding estimates on $\mathcal{L}(P_{2^{-k}}*g_k(x))$

Our approximations will be the finite sums $\sum_{k=-1}^{M} P_{2^{-k}}*g_k(x)$. In order to apply $\eqref{e:approx2}$, we will need to bound $E[|\sum_{k=M+1}^{\infty} \mathcal{L}(P_{2^{-k}} * g_k(x))|^2]$. The latter expression can be explicitly written out as
\begin{align*}
     \sum_{k,l = M+1}^{\infty} \int_{-\infty}^{\infty}\int_{\infty}^{\infty}g_k(s) g_l(t) E[(N\eta)^2 &\Im\left[m_t(E_t + \eta s + \ri 2^{-k}\eta) - \hat m_t(E_t + \eta s + \ri 2^{-k}\eta) - \frac{2}{4 \beta N} \frac{1}{\eta s + \ri 2^{-k}}\right]\\
    &\Im\left[m_t(E_t + \eta t + \ri 2^{-l}\eta) - \hat m_t(E_t + \eta t + \ri 2^{-l}\eta) - \frac{2}{4 \beta N} \frac{1}{\eta t +\ri 2^{-l}}\right]] \rd s \rd t
\end{align*}

We can apply the Cauchy-Schwartz inequality on the inside of the expectation. This decouples the variables $s$ and $t$ and the resulting sum can be written as the square of the following term.

\begin{equation} \label{e:mainint}
\sum_{k=M+1}^{\infty} \int_{-\infty}^{\infty} g_k(s)E\left[(N \eta)^2 \left|\Im\left[m_t(E_t + \eta s + \ri 2^{-k}\eta) - \hat m_t(E_t + \eta s + \ri 2^{-k}\eta) - \frac{2}{4 \beta N} \frac{1}{\eta s +\ri 2^{-k}}\right]\right|^2\right]^{1/2} \rd s
\end{equation}
as in the equation below (21) in \cite[Section 4]{MR3116567}.

In order to justify the approximation, all we need to do is prove that the above sum is finite; then we would be able to show that the finite sums in the Littlewood-Paley decomposition provides a sufficiently good approximating sequence.
To do this at mesoscopic scales, we need additional decay estimates on $g_k(s)$. It is defined on page 11 of \cite{MR3116567} as
\begin{equation}
\hat{g_k}(\xi) = e^{2^{-k}|\xi|} \hat{\omega}(2^{-k} \xi) \hat{\psi}(\xi)
\end{equation}
which gives us that
\begin{equation}
    g_k(x) = \mathcal{F}^{-1}(e^{2^{-k} |\xi|} \hat{\omega}(2^{-k} \xi)) * \psi
\end{equation}
where $\mathcal{F}^{-1}$ is the inverse Fourier transform.

Let us get estimates on the value of $\mathcal{F}^{-1}(e^{2^{-k} |\xi|} \hat{\omega}(2^{-k} \xi))$ at large values.
We have
\begin{align} \begin{split} \label{e:DecayInt}
  \mathcal{F}^{-1}(e^{2^{-k} |\xi|} \hat{\omega}(2^{-k} \xi)) &= \int_{-\infty}^{0} e^{-2^{-k} \xi} \hat{\omega}(2^{-k} \xi) e^{2\pi i \xi x} \rd \xi + \int_{0}^{\infty} e^{2^{-k} \xi} \hat{\omega}(2^{-k} \xi) e^{2\pi i \xi x} \rd \xi\\
  &= 2^{k} \int_{-\infty}^{0} e^{- \xi} \hat{\omega}( \xi) e^{2\pi i 2^{k} \xi x} \rd \xi + 2^{k}\int_{0}^{\infty} e^{ \xi} \hat{\omega}( \xi) e^{2\pi i 2^k \xi x} \rd \xi\\
  &= \frac{1}{(2 \pi i x)^L (2^k)^{L-1}}  \left[(-1)^{L}\int_{-\infty}^{0} (e^{- \xi} \hat{\omega}( \xi))^{(L)} e^{2\pi i 2^{k} \xi x} \rd \xi + (-1)^{L} \int_{0}^{\infty} (e^{ \xi} \hat{\omega}( \xi))^{(L)} e^{2\pi i 2^k \xi x} \rd \xi \right]
\end{split}\end{align}
where in the second line we rescaled $\xi$ by $2^k$ and in the third line we performed an integration by parts L times, we can do this safely since $\hat{\omega}$ is smooth and supported away from 0.
At this point, we remark that for the function $g_{-1}$, we are able to integrate by parts twice even if the function $\hat{\omega}$ is not supported away from the origin; this implies that all functions in our decomposition have decay at least $x^{-2}$.

Using this, we can now get decay estimates on the convolution $g_k*\psi$. Without loss of generality, we may assume that the support of $\psi$ is $[-1,1]$ by scaling. We get
\begin{equation}
g_k* \psi(x) = \int_{-1}^{1} g_k(x-y) \psi(y) \rd y
\end{equation}
This can clearly be bounded by $\frac{C_L}{2^{(L-1)k}(x-1)^L}$ when $x\ge 2$, while we have  a bound of $\frac{C_L}{2^{(L-1)k}(x+1)^L}$ when $x \le 1$. 

We will now attempt to bound the value of \eqref{e:mainint}. We need to divide this into a couple cases based on whether we have $2^{5k} \eta \le N^{-\epsilon}$ or $2^{5k} \eta \ge N^{-\epsilon}$  and whether $2^{-k} \eta$ is greater or less than $N^{-2/3 + \epsilon}$.

Case 1: $2^{5k} \eta \le N^{-\epsilon/2}$ and $2^{-k} \eta \ge N^{-2/3+\epsilon/2}$

To simplify notation as before we denote $h^k = h_{2^{-k}}$

We divide the integral \eqref{e:mainint} into four parts.
\begin{align}
    &\int_{-\infty}^{-2^{-5/6 k}\eta^{-1/6}} g_k(s) [\bE (h^k(s))^2]^{1/2} \rd s \\
    &+ \int_{-2^{-5/6 k}\eta^{-1/6}}^{-2} g_k(s)[\bE (h^k(s))^2]^{1/2} \rd s
    + \int_{-1}^{1} g_k(s)  [\bE (h^k(s))^2]^{1/2} \rd s
\end{align}
as well as the analogue of the first three integrals on the positive real axis.

For the first integral over the region $(-\eta^{-1},2^{-5/6k} \eta^{-1}) $ , we use decay of $g_k(s)$ and the trivial bound $h^k \le N 2^{k} $.  We can bound this integral by the integral of  $\frac{C_L N }{2^{(L-2)k} x^L}$ from $2^{-5/6k \eta^{-1/6}}$ to $\infty$. One can see that the order of this term is $\frac{\hat C_L N 2^{5/6(L-1)k} \eta^{(L-1)/6}}{2^{k(L-2)}}$ uniformly in $k$. We can fix a constant $L$ such that $(L-2) - 5/6(L-1) >1$ to ensure the summability of this sequence in $k$. We can also choose $\epsilon (L-1)/6  \ge 1$ to get $\eta^{(L-1)/6} N \ll 1$. Here we used the fact that $\eta \ll N^{-\epsilon}$. Thus, this term is summable in $k$ and the resulting sum in $k$ will go to $0$ and $N$ goes to $\infty$.

    For the second integral, we also use decay of $g_k(s)$ but with a superior variance estimate. We are in the regime where we can apply the variance estimate of Theorem \ref{t:mesoCLT}. The expectation can be bounded by $2^{k}[1+\OO(N^{-\epsilon})]$. The integral will be upper bounded by $2^{-k} \int_{2}^{\infty} \frac{1}{(x-1)^3}[1+\OO(N^{-\epsilon})]$. This term is summable in $k$ and ,thus, will get smaller under better approximations.
    
The third integral will be bounded by using the Cauchy-Schwartz identity and the following bound on the summability of the $L^2$ norms of the $g_k$. 
\begin{equation}\label{e:summable}
    \sum_{k=-1}^{\infty} 2^{2ks}  ||g_k||_{L^2}^2 \le C ||\psi||_{H^s}
\end{equation}
Refer to Theorem 5 of \cite{MR3116567}.
Using our expectation bound of $2^{k}[1+ \OO(N^{-\epsilon}]$ the Cauchy-Schwartz inequality shows us that the integral is bounded by
\begin{equation}
    2 {\int_{-\infty}^{\infty} 2^{2k(1+\tilde \epsilon)}|g_k|^2 \rd x}^{1/2}
\end{equation}

$\tilde \epsilon$ can be chosen carefully enough to ensure that we can use AM-GM to bound the above by a term of the type $\eqref{e:summable}$ and $2^{-\hat{\epsilon}k}$, which is summable in k. Namely, if we know that $\phi$ is in the class $H^{1+ \gamma}$ one can choose $\tilde \epsilon$ to be $\gamma/2$ and $\hat{\epsilon}$ to be $\gamma/2$.

Case 2: $2^{5k} \eta \ge N^{-\epsilon/2}$ or $2^{-k} \eta \ge N^{-2/3+\epsilon/2}$

We will remark in this case that we have $2^k \ge N^{\hat\epsilon}$ for some positive $\hat\epsilon$. Notice that $\eta \ge N^{-\epsilon}$ implies that $2^{5k} \ge N^{\epsilon/2}$. In the other case,  the fact that $2^{k} \eta \ge N^{-2/3}$ and $\eta \ge N^{-2/3 + \epsilon}$ allows us to lower bound $2^k$ by $N^{\epsilon}$

The differences in this case relate to the fact that for low frequencies we must use another variance bound
\begin{equation}
\bE[(h^k(s))^2]^{1/2}  \le \frac{1}{2^{-k(1+\delta)}} 
\end{equation}
 Thus when $2^{-k} \eta \ge N^{-2/3+\epsilon/2}$, this is merely a consequence of the Local Law.

When $2^{-k}\eta \le N^{-2/3 + \epsilon/2}$ this comes from
\begin{equation}
    \Im[m_t(x+ i 2^{-k} \eta)] \le \frac{1}{N (2^{-k})^{1+ \delta} \eta}
\end{equation}
for sufficiently small $\delta$. Clearly the bound on the expectation is the square of the above quantity. We will prove the above identity via monotonicity.

We have 
\begin{equation}
    \Im[m_t(x+ i 2^{-k} \eta)] \le \frac{\Im[m_t(x+i N^{-2/3 + \hat{\delta}})] N^{-2/3+ \hat{\delta}}}{2^{-k} \eta} 
\end{equation}
Using the local law as well as the square root behavior of $\hat{m}_t$ allows us to bound $\Im[m_t(x + i N^{-2/3 + \hat{\delta}})]N^{-2/3+ \hat{\delta}} $ by $N^{-1+  \frac{3}{2}\hat{\delta} }$, where $\hat{\delta}$ can be chosen to be as small as possible. The quantity on the right hand side of the above equation can thus be written as
\begin{equation}
    \frac{N^{\frac{3}{2}\hat{\delta}}}{N 2^{-k} \eta}
\end{equation}

If we fix $\delta$, we see that $\hat{\delta}$ can be chosen sufficiently small to allow us to allow $(2^{k})^{\delta} \ge N^{\frac{3}{2}\hat{\delta}}$.

Now we write the integral \eqref{e:mainint} as two parts
\begin{align}
    &\int_{-\infty}^{-2}  g_k(s) \bE[(h_k(s))^2]^{1/2} \rd s 
     +\int_{-2}^{2} g_k(s) \bE[(h_k(s))^2]^{1/2} \rd s
\end{align}
and, again, a final integral would be the analogue of the first in the positive real axis.

The first integral can be bounded by using decay again. $g_k(s) \le \frac{C_L}{(x-1)^L 2^{k(L-1)}}$ while the expectation is bounded by $2^{k}N$. The resulting integral is bounded by $\frac{C_L N}{2^{k(L-1)}}$. Notice that since we had the lower bound $2^k \ge N^{\tilde \delta}$, we can choose $L$ such that $(L-1) \tilde \delta \ge 1$. This would imply that $\frac{N}{2^{k(L)}} \le 2^{-k}$. This sum would then be summable.

We apply Cauchy-Schwartz to bound the second integral in a manner similar to when it was done in the previous case. This integral is less than
\begin{equation}
    [\int_{-1}^{1} 2^{2k(1+\hat{\epsilon})}|g_{k}(s)|^2 \rd s]^{1/2} \int_{-1}^{1} 2^{-2k(1+ \hat{\epsilon})} 2^{2k(1+\delta)} \rd s
\end{equation}
Recall that we can choose $\delta$ as small as we want. After we fix $\hat{\epsilon}$, one merely needs to choose $\delta < \hat{\epsilon}$ and the second integral merely becomes a decaying multiplicative factor. At the very minimum, it is bounded by a constant factor uniform in $k$. As before, if we know that $\psi$ is in $H^(1+\gamma)$, we can choose $\hat{\epsilon}$ to be $\gamma/2$ and perform AM-GM inequality on the integral to get out a square as in \eqref{e:summable}.

This shows that the desired sum is finite and therefore we have a good approximating sequence in the form of finite sums in the Paley-Littlewood decomposition.
\end{proof}

\bibliography{References}{}
\bibliographystyle{plain}

\end{document}